\newcommand\Tstrut{\rule{0pt}{2.6ex}}         
\newcommand\Bstrut{\rule[-1ex]{0pt}{0pt}}   
\newcommand{\ds}{\displaystyle}
\newcommand{\inv}{^{-1}}
\newcommand{\floor}[1]{\lfloor #1 \rfloor}
\newcommand{\Floor}[1]{\left\lfloor #1 \right\rfloor}
\newcommand{\cal}{\mathcal}
\newcommand{\bbf}{\mathbb{F}}
\newcommand{\bbq}{\mathbb{Q}}
\newcommand{\bbz}{\mathbb{Z}}
\newcommand{\disc}{\operatorname{disc}}
\newcommand{\ind}{\operatorname{ind}}
\newcommand{\red}{\operatorname{red}}
\newcommand{\tr}{\operatorname{tr}}
\newcommand{\mmod}[1]{\,(\!\!\!\!\mod{#1})}
\newtheorem{thm}{Theorem}[section]
\newtheorem{prop}[thm]{Proposition}
\newtheorem{cor}[thm]{Corollary}
\newtheorem{lem}[thm]{Lemma}
\theoremstyle{definition}
\newtheorem{conj}[thm]{Conjecture}
\newtheorem{defn}[thm]{Definition}
\newtheorem{example}[thm]{Example}
\theoremstyle{remark}
\newcounter{remarks}
\newcounter{notations}
\newtheorem{remark}[remarks]{Remark}
\newtheorem{notation}[notations]{Notation}
\begin{document}
\title{Discriminants of Chebyshev Radical Extensions}
\author[thomas alden gassert]{Thomas Alden Gassert}
\email{gassert@math.umass.edu}
\address{Department of Mathematics and Statistics, University of Massachusetts, Amherst, 710 N. Pleasant Street, Amherst, MA, USA 01003}

\date{\today}

\begin{abstract}
Let $t$ be any integer and fix an odd prime $\ell$. Let $\Phi(x) = T_\ell^n(x)-t$ denote the $n$-fold composition of the Chebyshev polynomial of degree $\ell$ shifted by $t$. If this polynomial is irreducible, let $K = \bbq(\theta)$, where $\theta$ is a root of $\Phi$. A theorem of Dedekind's gives a condition on $t$ for which $K$ is monogenic. For other values of $t$, we apply the Montes algorithm to obtain a formula for the discriminant of $K$ and to compute basis elements for the ring of integers ${\cal O}_K$.
\end{abstract}

\maketitle

\section{Introduction}
Let $k = \bbq(\theta)$ be a number field where $\theta$ is the root of a monic, irreducible polynomial $f(x) \in \bbz[x]$. A classical problem in number theory is the determination of the discriminant of such a number field $k$, which is closely related to the discriminant of the polynomial $f(x)$ (see equations \eqref{eq:1.1}, \eqref{eq:1.2}, \eqref{eq:1.3}). In this paper we focus on number fields that arise from iterating a particular family of polynomials, namely the Chebyshev polynomials (of the first kind), which we define in section \ref{sec:prelim}. We use the standard notation from dynamics to denote the iterates of a polynomial:

\begin{notation}
Let $f \in \bbq[x]$ be a polynomial of degree at least 2. Define $f^n(x) = f(f^{n-1}(x))$ to be the $n$-fold iterate of $f(x)$ under composition with $f^0(x) = x$. An exponent after the argument will be used to denote the $n$-th product, i.e. $f(x)^n := \big(f(x)\big)^n$.
\end{notation}

The polynomials of interest are the iterates $T_\ell^n(x)-t$ where $T_d(x)$ is the Chebyshev polynomial of degree $d$, $\ell$ is an odd prime, and $t$ is a fixed integer for which every iterate is irreducible. It is known that for every $\ell$ there are infinitely many values $t$ for which the iterates $T_\ell^n(x)-t$ are all irreducible \cite{Gassert12}. For example, when $\nu_\ell(t)=1$, the polynomials are Eisenstein at $\ell$ (Lemma \ref{lem:chebymodl}). A root $\theta_n$ of $T_\ell^n(x)-t$ is what we call a \emph{Chebyshev radical}, and we call the number field $\bbq(\theta_n)$ a \emph{Chebyshev radial extension}. We remind the reader of the standard discriminant formulas.

\begin{notation}
Let $k = \bbq(\theta)$ be a number field where $\theta$ is the root of a monic, irreducible polynomial $f(x) \in \bbz[x]$, as originally defined. We write $D(f)$ for the discriminant of the polynomial and $\Delta(k)$ for the discriminant of the number field. These discriminants are given by
\begin{align} \label{eq:1.1}
D(f) = \prod_{1\le i < j \le d} (\theta_j-\theta_i)^2
\end{align}
where $f$ has roots $\theta_1, \ldots, \theta_n$, and
\begin{align} \label{eq:1.2}
\Delta(k) = \det(\tr_{k/\bbq}(\alpha_i\alpha_j))
\end{align}
where $\alpha_1, \ldots, \alpha_d$ is a basis for the ring of integers $\cal O_k$. 
\end{notation}

The discriminant provides, in some sense, a measure of the arithmetic complexity of the underlying ring: $\bbz[\theta]$ in the case of $D(f)$, and $\cal O_k$ in the case of $\Delta(k)$. Furthermore, $\bbz[\theta] \subset \cal O_k$, and the discriminant scales as a square relative to the index $[\cal O_k \colon \bbz[\theta]]$, for which we write
\begin{align} \label{eq:1.3}
[{\cal O}_k \colon \bbz[\theta]]^2 = \frac{D(f)}{\Delta(k)}.
\end{align}

\begin{notation} For simplicity, we will write $\Phi(x) := T_\ell^n(x)-t$ where $\ell$, $n$, and $t$ are understood to satisfy the properties listed above. Unless otherwise stated, we use $\theta$ to denote a root of $\Phi$, and we write $K:=\bbq(\theta)$. Lastly, we write $\ind(\Phi) := [\cal O_K \colon \bbz[\theta]]$.
\end{notation}

In this paper we work towards an alternative formula for $\Delta(K)$. A simple formula for $D(\Phi)$ depending only on $\ell$ and $t$ is known (Proposition \ref{prop:disc}), leaving the majority of this paper to determine $\ind(\Phi)$. In section \ref{sec:monogenic}, we use Dedekind's criterion to identify the exact conditions under which a prime $p$ divides $\ind(\Phi)$ (Theorem \ref{th:monogenic}). In particular, these conditions precisely identify the values $t$ for which the number field $K$ is monogenic, that is $\cal O_K = \bbz[\theta]$. The primes that divide $\ind(\Phi)$ fall into two categories: 
\begin{enumerate}
\item $\ell \mid \ind(\Phi)$ if and only if $T_\ell(t) \equiv t \mmod{\ell^2}$
\item $p \mid \ind(\Phi)$ if and only if $t \equiv \pm2 \mmod{p^2}$. 
\end{enumerate}
In section \ref{sec:gmn method}, we introduce the Montes algorithm \cite{GMN08,GMN09,GMN12} for computing $\ind(f)$. In section \ref{sec:ell-index}, we apply the algorithm to the case where $\Phi(t) \equiv 0 \mmod{\ell^2}$ but $t \not\equiv \pm2 \mmod{\ell^2}$ and obtain a closed formula for $\Delta(K)$ when $t\pm2$ are square-free (Corollary \ref{cor:fielddisctpm2sf}). Additionally, in section \ref{sec:integral basis}, we determine generators for the ring of integers $\cal O_K$ when $t$ is subjected to the same constraints (Theorem \ref{th:ellintegralbasis}). In section \ref{sec:p-index}, we determine $\nu_p(\ind(\Phi))$ when $t \equiv \pm2 \mmod{p^2}$ (Theorem \ref{th:6.1}) and give an improved formula for $\Delta(K)$ (Corollary \ref{cor:6.3}). Most of our results are accompanied by examples.

\section{Preliminaries: Properties of Chebyshev polynomials} \label{sec:prelim}
We begin by recalling, without proof, some of the properties of Chebyshev polynomials (of the first kind) $T_d(x)$ and (of the second kind) $U_d(x)$ (e.g. see Rivlin \cite{Rivlin90} or Silverman \cite{Silverman07}).
\begin{enumerate}
\item For each integer $d \ge 0$, $T_d(x) \in \bbq[x]$ is the unique monic polynomial satisfying
$$T_d(z + z\inv) = z^d + z^{-d}$$
in the field $\bbq(z)$. Moreover, $T_d(x)$ is a degree $d$ polynomial with integral coefficients.

\item For each integer $d \ge 0$, 
$$\ds U_d(x) = \frac{d}{dx} \frac{T_{d+1}(x)}{d+1}$$ 
is a monic, integral polynomial of degree $d$.

\item $T_d(T_e(x)) = T_{de}(x)$ for all $d,e \ge 0$.

\item $T_d(-x) = (-1)^dT_d(x)$, \quad $U_d(-x) = (-1)^dU_d(x)$.

\item For all $d \ge 0$, the Chebyshev polynomials satisfy the recurrence relation
\begin{align*}
&T_{d+2}(x) = xT_{d+1}(x) - T_d(x), &&U_{d+2}(x) = xU_{d+1}(x) - U_d(x).
\end{align*}

\item For all $d \ge 0$, the Chebyshev polynomials satisfy the trigonometric relations
\begin{align*}
&T_d(2 \cos(\theta)) = 2\cos(d\theta), &&U_d(2\cos(\theta)) = \frac{\sin((d+1)\theta)}{\sin(\theta)}.
\end{align*}
\item For all $d \ge 1$, the Chebyshev polynomials are given by the explicit formulas
\begin{align*}
T_d(x) &= \sum_{k=0}^{\floor{d/2}}(-1)^k\frac{d}{d-k}{d-k \choose k}x^{d-2k}, &&U_d(x) = \sum_{k=0}^{\floor{d/2}}(-1)^k {d-k \choose k}x^{d-2k}.
\end{align*}

\item Equivalently, 
\begin{align*}
U_d(x) &= \frac{\left(x+\sqrt{x^2-4}\right)^{d+1} - \left(x-\sqrt{x^2-4}\right)^{d+1}}{2^{d+1}\sqrt{x^2-4}} \quad \text{ if } \quad x \neq \pm 2.
\end{align*}
\end{enumerate}

We proceed with some results regarding the factorization of Chebyshev polynomials.

\begin{lem}\label{lem:2.1}
If $d$ is an odd integer and $t = \pm2$, then 
\begin{align*}
T_d(x)-t = (x-t)\tau(x)^2
\end{align*}
for some monic polynomial $\tau(x) \in \bbz[x]$ of degree $(d-1)/2$. Moreover, if $d=\ell^n$ for an odd prime $\ell$, then $T_\ell^n(x)-t$ factors into irreducibles as
\begin{align*}
T_\ell^n(x)-t = (x-t)\phi_1(x)^2 \cdots \phi_r(x)^2
\end{align*}
where $\phi_i(x)$ has degree $(\ell^i-\ell^{i-1})/2$ for $i = 1,\ldots,r$.
\end{lem}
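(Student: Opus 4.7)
The plan is to handle the first identity via the substitution $x = z + z^{-1}$, then derive the second assertion by a Galois-orbit analysis of the roots.

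\textbf{Step 1: The factorization $T_d(x)-t = (x-t)\tau(x)^2$ for $t = 2$.} Using property (1) of Chebyshev polynomials, write $T_d(x) = z^d + z^{-d}$ with $x = z + z^{-1}$. Then
\[
T_d(x) - 2 = z^d + z^{-d} - 2 = \frac{(z^d - 1)^2}{z^d}, \qquad x - 2 = \frac{(z-1)^2}{z}.
\]
Dividing and using that $d$ is odd so $(d-1)/2 \in \bbz$, one gets
\[
\tau(x) \;=\; \frac{z^{d-1} + z^{d-2} + \cdots + 1}{z^{(d-1)/2}} \;=\; \sum_{k=-(d-1)/2}^{(d-1)/2} z^k \;=\; 1 + \sum_{k=1}^{(d-1)/2}\bigl(z^k + z^{-k}\bigr).
\]
Since $z^k + z^{-k} = T_k(x)$, this gives the clean formula $\tau(x) = 1 + T_1(x) + T_2(x) + \cdots + T_{(d-1)/2}(x) \in \bbz[x]$, which is visibly monic of degree $(d-1)/2$.

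\textbf{Step 2: The case $t = -2$.} Apply property (4) in the form $T_d(-x) = -T_d(x)$ (valid since $d$ is odd) to the identity just established at $-x$: this yields $T_d(x) + 2 = (x+2)\,\tau(-x)^2$, so we may take $\tilde\tau(x) = \pm\tau(-x)$, monic of degree $(d-1)/2$.

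\textbf{Step 3: The refined factorization when $d = \ell^n$.} By property (3), $T_\ell^n(x) = T_{\ell^n}(x)$, so Steps 1--2 already give $T_\ell^n(x) - t = (x-t)\tau(x)^2$ with $\deg \tau = (\ell^n-1)/2$. It remains to decompose $\tau$ into irreducibles. For $t = 2$, the distinct roots of $T_{\ell^n}(x) - 2$ other than $x = 2$ are the values $x_k = \zeta^k + \zeta^{-k}$ for $\zeta = e^{2\pi i /\ell^n}$ and $k \in \{1, 2, \dots, (\ell^n-1)/2\}$, each doubled. Group these $k$ according to $\ord(\zeta^k) = \ell^i$ for $i = 1, \dots, n$, i.e.\ by $\nu_\ell(k) = n-i$. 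For each such $i$, the Galois action of $(\bbz/\ell^n\bbz)^\ast/\{\pm 1\}$ on $\bbq(\zeta_{\ell^i} + \zeta_{\ell^i}^{-1})$ is transitive, producing a single orbit of size $\phi(\ell^i)/2 = (\ell^i - \ell^{i-1})/2$. The minimal polynomial $\phi_i(x) \in \bbz[x]$ of $\zeta_{\ell^i} + \zeta_{\ell^i}^{-1}$ is therefore irreducible of the correct degree, and multiplying over $i$ gives $\tau(x) = \phi_1(x) \cdots \phi_n(x)$, whence the stated factorization. The case $t = -2$ again follows by the substitution $x \mapsto -x$, replacing each $\phi_i$ by a unit multiple of $\phi_i(-x)$.

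\textbf{Anticipated obstacle.} The calculation in Step 1 is essentially formal once one spots the identity $T_d(x)-2 = (z^d-1)^2/z^d$; the only thing to check is that the resulting $\tau$ lies in $\bbz[x]$, which is made transparent by the expansion in $T_k(x)$. The most delicate point is Step 3, specifically verifying that the $k$ in $\{1, \dots, (\ell^n-1)/2\}$ with a prescribed value of $\nu_\ell(k)$ form a single Galois orbit under the quotient $(\bbz/\ell^n\bbz)^\ast/\{\pm 1\}$ and that the resulting orbit size matches $(\ell^i-\ell^{i-1})/2$. A small counting check—$1 + \sum_{i=1}^n (\ell^i-\ell^{i-1}) = \ell^n$—confirms that no roots are missed.
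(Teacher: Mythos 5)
Your proof is correct, and it takes a noticeably different (and in places more explicit) route than the paper. For the square factor, the paper works trigonometrically: it observes that $\frac{d}{dx}(T_d(x)-2)=dU_{d-1}(x)$ and that the points $2\cos(2i\pi/d)$, $i=1,\dots,(d-1)/2$, are common roots of $T_d(x)-2$ and its derivative, hence double roots; you instead substitute $x=z+z^{-1}$ and exhibit the quotient $(T_d(x)-2)/(x-2)$ as a perfect square directly, which has the bonus of producing the closed form $\tau(x)=1+T_1(x)+\cdots+T_{(d-1)/2}(x)$ that the paper never writes down. For the second assertion the paper simply cites the fact that the splitting field of $T_\ell^n(x)-t$ is $\bbq(\zeta_{\ell^n})^+$ and leaves the factorization into irreducibles to the reader; your Step 3 supplies exactly the missing argument, sorting the roots $\zeta^k+\zeta^{-k}$ by $\nu_\ell(k)$, invoking transitivity of $(\bbz/\ell^i\bbz)^\ast/\{\pm1\}$ on each stratum, and checking that the orbit sizes $(\ell^i-\ell^{i-1})/2$ account for all $(\ell^n-1)/2$ quadratic-factor roots. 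The only point worth making explicit in Step 1 is that your identity a priori lives in $\bbq(z)$, and one should note that both sides are invariant under $z\mapsto z^{-1}$ and hence genuinely lie in $\bbq(x)$ (integrality then being clear from the expansion in the $T_k$); this is immediate, so it is a presentational remark rather than a gap.
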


\begin{proof}
Suppose $t = 2$. Recall that the Chebyshev polynomials satisfy the trigonometric relations 
\begin{align*}
T_d(2\cos(\theta)) - 2= 2\cos(d\theta) - 2 \quad \text{ and } \quad U_{d-1}(2 \cos(\theta)) = \frac{\sin (d\theta)}{\sin(\theta)},
\end{align*} 
and note that 
$$\frac{d}{dx} \Big(T_d(x) - 2\Big) = dU_{d-1}(x).$$
Certainly, 2 is a root of $T_d(x)-2$, and 
$$\theta_i = 2\cos\left(\frac{2i\pi}{d}\right); \quad i = 1, \ldots, \frac{d-1}{2}$$ 
are roots of $T_d(x)-2$ and its derivative. It follows that
$$T_d(x)-2 = (x-2)\prod_{i=1}^{(d-1)/2}(x-\theta_i)^2.$$
A similar argument applies in the case $t = -2$.

The factorization of $T_\ell^n(x)-t$ follows from the fact that the splitting field of $T_\ell^n(x)-t$ is $\bbq(\zeta_{\ell^n})^+$, the maximal totally-real subfield of $\bbq(\zeta_{\ell^n})$, where $\zeta_{\ell^n}$ is a primitive $\ell^n$th root of unity.
\end{proof}

Recall our previously defined notation $\Phi(x)=T_\ell^n(x)-t$. 

\begin{lem} \label{lem:chebymodl}
We have $\Phi(x) \equiv (x-t)^{\ell^n} \mmod \ell$.
\end{lem}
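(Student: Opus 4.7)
The plan is to reduce the problem to showing $T_\ell(x) \equiv x^\ell \pmod{\ell}$, then chain this with Fermat's little theorem and an induction on $n$.

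First, I would verify that $T_\ell(x) \equiv x^\ell \pmod{\ell}$. Using the explicit formula listed as property (7),
\[
T_\ell(x) = \sum_{k=0}^{(\ell-1)/2}(-1)^k\frac{\ell}{\ell-k}\binom{\ell-k}{k}x^{\ell-2k},
\]
the $k=0$ term is $x^\ell$. For each $k$ with $1 \leq k \leq (\ell-1)/2$, rewrite the coefficient as
\[
\frac{\ell}{\ell-k}\binom{\ell-k}{k} = \ell\cdot\frac{(\ell-k-1)!}{k!\,(\ell-2k)!}.
\]
Since $T_\ell(x)\in\bbz[x]$ (property (1)), this coefficient is an integer, and because $\gcd(\ell-k,\ell)=1$ the factor of $\ell$ in the numerator cannot be canceled. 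Hence every non-leading coefficient of $T_\ell$ is divisible by $\ell$, giving $T_\ell(x)\equiv x^\ell\pmod{\ell}$.

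Next, combine this with Frobenius in characteristic $\ell$ and Fermat's little theorem: modulo $\ell$,
\[
(x-t)^\ell \equiv x^\ell - t^\ell \equiv x^\ell - t \equiv T_\ell(x) - t.
\]
So the base case $n=1$ of the claim holds. For the inductive step, let $y = T_\ell^{n-1}(x)$ and apply the base case with $y$ in place of $x$ and with the root of $T_\ell(y)-t$ written as $T_\ell(y)-t \equiv (y-t)^\ell\pmod{\ell}$. This gives
\[
T_\ell^n(x)-t = T_\ell\bigl(T_\ell^{n-1}(x)\bigr)-t \equiv \bigl(T_\ell^{n-1}(x)-t\bigr)^\ell \pmod{\ell}.
\]
By the inductive hypothesis $T_\ell^{n-1}(x)-t\equiv(x-t)^{\ell^{n-1}}\pmod{\ell}$, so raising to the $\ell$th power yields $(x-t)^{\ell^n}$, as desired.

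The only genuine step requiring attention is verifying the divisibility by $\ell$ of the Chebyshev coefficients; the remainder is a formal two-line induction riding on Frobenius. I expect no real obstacles, since the integrality of $T_\ell$ together with the coprimality of $\ell-k$ to $\ell$ makes the divisibility immediate.
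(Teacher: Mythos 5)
Your proof is correct and follows essentially the same route as the paper: both hinge on showing $T_\ell(x)\equiv x^\ell\pmod\ell$ via the explicit coefficient formula (the paper computes the $\ell$-adic valuation of $\frac{(\ell-k-1)!}{k!(\ell-2k)!}\ell$ directly, while you deduce divisibility from integrality of the coefficient plus $\gcd(\ell-k,\ell)=1$ — the same fact dressed differently), and then finish with Frobenius and Fermat. Your explicit induction merely fills in the step the paper leaves as "it follows that."
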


\begin{proof} 
Recall that 
$$T_\ell(x) = \sum_{k=0}^{\floor{ \ell/2}}(-1)^k\frac{(\ell-k-1)!}{k!(\ell-2k)!}\ell x^{\ell-2k}.$$
Note that
$$\nu_\ell\left(\frac{(\ell-k-1)!}{k!(\ell-2k)!}\ell\right) = 
\begin{cases}
0 & \mbox{ if } k=0\\
1 & \mbox{ otherwise,}\\
\end{cases}$$
where $\nu_\ell$ is the standard $\ell$-adic valuation, and thus $T_\ell(x) \equiv x^\ell \mmod \ell$. It follows that
$$T_\ell^n(x)-t \equiv x^{\ell^n}-t \equiv (x-t)^{\ell^n}\pmod \ell.$$
\end{proof}

\begin{notation} We use $\overline{\phantom l}$ to denote reduction modulo a prime $p$.
\end{notation}

\begin{prop} \label{prop:2.3}
Let $p$ be an odd prime different from $\ell$ such that $t \equiv \pm 2 \mmod{p^2}$. Let $\mu$ be the least positive integer for which $\nu_\ell(p^{2\mu}-1)\ge 1$, and define $h = \nu_\ell(p^{2\mu}-1)$.

If $1 \le n < h$, then $\overline\Phi(x)$ has $(\ell^n-1)/2\mu$ distinct irreducible factors of degree $\mu$. That is,
\begin{align*}
\Phi(x) \equiv (x-\overline t) \prod_{i=1}^{\frac{\ell^n-1}{2\mu}}\phi_i(x)^2 \pmod p,
\end{align*}
where the $\phi_i$'s are distinct irreducible factors of degree $\mu$.

Otherwise, if $n = h+e \ge h$, $\overline\Phi(x)$ has $(\ell^h-1)/2\mu$ distinct irreducible factors of degree $\mu$, and $\ell^{h-1}$ distinct irreducible factors of degree $\ell^j\mu$ for each integer $1 \le k \le e$. More precisely,
\begin{align*}
\Phi(x) \equiv (x-\overline t)\prod_{i=1}^{\frac{\ell^{h-1}-1}{2\mu}}\phi_i(x)^2\prod_{j=1}^{\frac{\ell^{h-1}(\ell-1)}{2\mu}}\prod_{k=0}^e\psi_j(T_\ell^k(x))^2 \pmod p,
\end{align*}
where the $\psi_j$'s are irreducible factors of $\overline{T_\ell^h(x)- t}$ of degree $\mu$, distinct from the $\phi_i$'s.
\end{prop}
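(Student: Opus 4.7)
The plan is to combine Lemma \ref{lem:2.1} with a cyclotomic identification of its $\bbz$-irreducible factors, and then analyze how each reduces modulo $p$ using the structure of $(\bbz/\ell^i\bbz)^\times$.

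Since $t \equiv \pm 2 \pmod p$, Lemma \ref{lem:2.1} applied with $d = \ell^n$ gives
\[
\Phi(x) \;\equiv\; T_{\ell^n}(x) \mp 2 \;\equiv\; (x \mp 2)\prod_{i=1}^n F_i(x)^2 \pmod p,
\]
where each $F_i \in \bbz[x]$ is irreducible of degree $\ell^{i-1}(\ell-1)/2$. Reading off the proof of that lemma, the roots of $F_i$ are the trigonometric values $2\cos(2\pi m/\ell^i)$ with $\gcd(m,\ell)=1$ (respectively $2\cos((2m-1)\pi/\ell^i)$ when $t=-2$); hence $F_i$ is the minimal polynomial over $\bbq$ of a generator $\alpha_i$ of the maximal totally real subfield $\bbq(\zeta_{\ell^i})^+$.

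Next I would decompose $\overline{F_i}$. Since $\bbz[\alpha_i] = \cal O_{\bbq(\zeta_{\ell^i})^+}$ and $p \ne \ell$ is unramified in this field, Dedekind's theorem writes $\overline{F_i}$ as a product of $\deg(F_i)/f_i$ distinct irreducibles of degree $f_i$, where $f_i$ is the residue degree, i.e.\ the order of the image of $p$ in $(\bbz/\ell^i\bbz)^\times/\{\pm 1\}$. Writing $e_1 = \ord_\ell(p)$ and observing that $h = \nu_\ell(p^{e_1}-1)$ (a short case check, using $p^{e_1}+1 \equiv 2 \pmod\ell$ when $e_1$ is odd), the lifting-the-exponent identity $\nu_\ell(p^{e_1\ell^k}-1) = h+k$ yields $\ord_{\ell^i}(p) = e_1\ell^{\max(0,\,i-h)}$. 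Combined with $\mu = e_1/\gcd(e_1,2)$, one obtains
\[
f_i = \begin{cases} \mu, & 1 \le i \le h, \\ \mu\,\ell^{\,i-h}, & i > h. \end{cases}
\]

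For $1 \le n < h$ the proposition now follows from the sum $\sum_{i=1}^n \ell^{i-1}(\ell-1)/(2\mu) = (\ell^n-1)/(2\mu)$. For $n = h+e$, the factors $F_1,\ldots,F_{h-1}$ produce the first product, with $(\ell^{h-1}-1)/(2\mu)$ distinct degree-$\mu$ irreducibles $\phi_i$, while $\overline{F_h}$ produces the $M := \ell^{h-1}(\ell-1)/(2\mu)$ degree-$\mu$ factors $\psi_j$. The remaining key ingredient is the $\bbz[x]$-identity
\[
F_h\bigl(T_\ell^k(x)\bigr) = F_{h+k}(x), \qquad 0 \le k \le e,
\]
which I would verify by matching roots in $\bbc$: each root $2\cos(2\pi m_0/\ell^h)$ of $F_h$ has exactly $\ell^k$ preimages $\{2\cos(2\pi(m_0+j\ell^h)/\ell^{h+k}) : 0 \le j < \ell^k\}$ under $T_\ell^k$; these are pairwise distinct because $\gcd(m_0,\ell)=1$ and $h\ge 1$ rule out the $\cos$-sign collision, and as $m_0$ varies they exhaust the roots of $F_{h+k}$, so the two monic polynomials of degree $\ell^{h+k-1}(\ell-1)/2$ agree. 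Reducing modulo $p$ gives $\overline{F_{h+k}}(x) = \prod_{j=1}^M \psi_j(T_\ell^k(x))$; since the residue-degree computation already produces exactly $M$ irreducible factors of degree $\mu\ell^k$ and each $\psi_j(T_\ell^k(x))$ has this degree, each $\psi_j(T_\ell^k(x))$ must itself be irreducible.

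The main obstacle will be the bookkeeping: verifying that the irreducibles collected from different $F_i$, and from the same $F_h$ at different levels $k$, are pairwise distinct (which holds because the $F_i$ are coprime in $\bbz[x]$, each $\overline{F_i}$ factors into distinct pieces by unramifiedness, and the $\psi_j(T_\ell^k(x))$ for different $k$ sit inside the distinct factors $\overline{F_{h+k}}$), and checking the minor trigonometric sign changes when $t = -2$ so that the cyclotomic identification $F_i \leftrightarrow \bbq(\zeta_{\ell^i})^+$ is uniform.
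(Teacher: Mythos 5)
Your proposal is correct, but it reaches Proposition \ref{prop:2.3} by a genuinely different route than the paper. The paper's proof is a counting argument that leans on two imported results from \cite{Gassert12}: Theorem \ref{th:trees}, which says the backward orbit of $\pm2$ under $T_\ell$ in $\overline\bbf_p$ consists of $(\ell-1)/2$ complete $\ell$-ary trees of height $\nu_\ell(p^{2m})-1$, and Lemma \ref{lem:trees}, which controls how that height grows with the field of definition; the degrees of the irreducible factors are then read off as the weights $[\bbf_p(\theta):\bbf_p]$ of the vertices, and the factor counts follow by counting points in the trees. You instead work globally: you identify the $\bbz[x]$-irreducible factors $F_i$ of $T_{\ell^n}(x)\mp2$ from Lemma \ref{lem:2.1} as minimal polynomials of generators of $\bbq(\zeta_{\ell^i})^+$, apply Dedekind's splitting theorem in these monogenic fields to get the residue degrees $f_i$ from the order of $p$ in $(\bbz/\ell^i\bbz)^\times/\{\pm1\}$ (your lifting-the-exponent computation of $f_i=\mu\ell^{\max(0,i-h)}$ is correct, as is the reconciliation of $\mu,h$ with $\ord_\ell(p)$), and obtain the $\psi_j(T_\ell^k)$ shape from the identity $F_h(T_\ell^k(x))=F_{h+k}(x)$, whose root-matching verification is sound. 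Your approach is self-contained modulo standard cyclotomic facts and makes the provenance of the factors transparent, at the cost of more bookkeeping; the paper's is shorter but opaque without the external tree theorems. One justification in your final paragraph is insufficient as written: coprimality of the $F_i$ in $\bbz[x]$ does \emph{not} by itself imply that $\overline{F_i}$ and $\overline{F_j}$ are coprime in $\bbf_p[x]$ (e.g. $x^2+1$ and $x^2+3$ collide mod $2$). What you actually need is that $\tau=\prod_iF_i$ remains separable mod $p$, which follows because the pairwise differences of its roots $\zeta_{\ell^n}^a+\zeta_{\ell^n}^{-a}$ are, up to units, cyclotomic elements of $\ell$-power norm (equivalently, $\disc(U_{\ell^n-1})$ is supported on $2$ and $\ell$, and $U_{\ell^n-1}=\pm\tau_+\tau_-$); with that one repair the distinctness claims, including the separability of each $\overline{F_{h+k}}$ that your ``$M$ factors of degree $\mu\ell^k$'' count relies on, all go through.
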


\begin{remark}
We will need to set up the tools for the proof of this proposition. The heuristic for the proof is the following: to every root $\theta$ of $\overline\Phi$ we assign a weight $w = [\bbf_p(\theta):\bbf_p]$. If we can determine the weights of all the roots of $\overline\Phi$, then we know the degrees of the irreducible factors of $\overline\Phi$. The results from \cite{Gassert12} describe the weights completely. We provide some terminology to understand the results in that paper.
\end{remark}

The action of the Chebyshev polynomial $T_\ell(x)$ (or any other polynomial) on a finite field $\bbf_p$ and its extensions can be realized in the form of a directed graph. Each value in the field corresponds to a vertex in the graph, and the graph contains a directed edge from $\beta$ to $\alpha$ if and only if $T_\ell(\beta) = \alpha$. The preimages of any value $\alpha$ can be found by tracing backwards along the paths terminating at $\alpha$.

\begin{defn}
The \emph{backwards orbit} of $\alpha$ is the set of all preimages of $\alpha$ under iteration by $T_\ell(x)$. In general, we restrict our discussion to the preimages contained within a certain finite field, and we write
$$\overleftarrow{O}_{\alpha}(\bbf_{p^m}) :=\{\theta \in \bbf_{p^m} \colon T_\ell^n(\theta) = \alpha, n \ge 1\}.$$
\end{defn}

The elements in the backward orbit of $\alpha$ can be arranged into tree graphs attached to $\alpha$. We use the following terminology to describe these trees.

\begin{defn}
The \emph{root} $r$ of a directed tree graph is a specialized point in the graph towards which all edges are directed. The \emph{height} of any vertex $v$ is the length of the (unique) path from $v$ to $r$. If the graph is finite, then the height of the graph is the length of the longest path contained in the graph. The vertices of a tree that have no incoming edges are often called \emph{leaves}. A tree is called \emph{$n$-ary} if every vertex that is not a leaf has $n$ incoming edges. We say that an $n$-ary tree is \emph{complete} $n$-ary if the height of every leaf is equal to the height of the graph.
\end{defn}

\begin{thm}[\cite{Gassert12}, Theorem 2.6] \label{th:trees}
If $\ell$ is an odd prime and $\nu_\ell(p^{2m}-1) \ge 1$, then $-2$ and $2$ are fixed and attached to each of these vertices are $(\ell-1)/2$ complete $\ell$-ary trees of height $\nu_\ell(p^{2m}-1)-1$.
\end{thm}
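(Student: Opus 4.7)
The plan is to exploit the semiconjugacy $T_\ell(z+z^{-1}) = z^\ell + z^{-\ell}$, which says that the map $\pi(z) := z + z^{-1}$ intertwines the $\ell$-th power map $\sigma_\ell \colon z \mapsto z^\ell$ with $T_\ell$. Thus the dynamics of $T_\ell$ on $\bbf_{p^{2m}}$ are the image under $\pi$ of the dynamics of $\sigma_\ell$ on $\bbf_{p^{2m}}^\times$, with the additional identification coming from the involution $\iota \colon z \mapsto z^{-1}$ (since $\pi(z) = \pi(z^{-1})$). In particular, the two $\sigma_\ell$-fixed points $z = \pm 1$ are also $\iota$-fixed and project to the two $T_\ell$-fixed points $\pi(\pm 1) = \pm 2$, which establishes the first claim of the theorem.

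Next I would analyze the $\sigma_\ell$-backward orbit tree at $1$. Since $\nu_\ell(p^{2m}-1) \ge 1$ forces $\gcd(\ell,p) = 1$, the $\ell$-power roots of unity in $\bbf_{p^{2m}}$ form a cyclic group of order $\ell^h$, where $h := \nu_\ell(p^{2m}-1)$; indeed $\ell^n$-th roots lie in $\bbf_{p^{2m}}$ precisely when $\ell^n \mid p^{2m} - 1$. A direct count of preimages under $\sigma_\ell$ then shows that $1$ has $\ell - 1$ nontrivial children (the primitive $\ell$-th roots of unity), and each such child generates a complete $\ell$-ary subtree of height $h - 1$ whose depth-$k$ vertices are the primitive $\ell^{k+1}$-th roots of unity. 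So in the $\sigma_\ell$-picture, $1$ has $\ell - 1$ complete $\ell$-ary subtrees of height $h - 1$ attached to it.

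To descend to the $T_\ell$-tree at $2 = \pi(1)$, I quotient by $\iota$. Because $\ell^h$ is odd, the only $\iota$-fixed element of the cyclic group of $\ell^h$-th roots of unity is $1$ itself, so the remaining $\ell^h - 1$ elements pair off into $(\ell^h-1)/2$ two-element orbits. For an internal vertex: if $\zeta$ is a primitive $\ell^k$-th root of unity with $k \ge 1$, then $\zeta \ne \pm 1$ (using $\ell$ odd), so $\sigma_\ell^{-1}(\zeta)$ and $\sigma_\ell^{-1}(\zeta^{-1})$ are disjoint $\ell$-element sets exchanged bijectively by $\iota$; hence the vertex $\beta = \pi(\zeta)$ has exactly $\ell$ preimages under $T_\ell$. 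The quotient thus halves the number of subtrees attached to the root while preserving the $\ell$-ary branching at each internal vertex, producing the $(\ell-1)/2$ complete $\ell$-ary trees of height $h - 1$ at $2$. The analogous statement at $-2$ follows from the identity $T_\ell(-x) = -T_\ell(x)$ (valid since $\ell$ is odd), which yields a $T_\ell$-equivariant isomorphism between the two backward-orbit trees.

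The main obstacle is the careful bookkeeping of the $\iota$-quotient at each level: one must verify both that the $\sigma_\ell$-preimages of $\zeta$ and $\zeta^{-1}$ remain disjoint (so that branches do not collapse) and that no new $\iota$-fixed vertex appears past the root. Both facts rest on the oddness of $\ell$, which is what ensures $-1$ is never an $\ell$-power root of unity, and this is precisely what pins down the branching structure as $(\ell-1)/2$ complete subtrees.
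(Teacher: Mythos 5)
The paper itself does not prove this statement; it is imported from \cite{Gassert12}, Theorem 2.6, so there is no in-text argument to compare against, and your proof must stand on its own. Your route---the semiconjugacy $\pi(z)=z+z^{-1}$ intertwining $z\mapsto z^\ell$ with $T_\ell$, followed by the quotient by the involution $\iota(z)=z^{-1}$---is the natural one and is essentially correct. The one step you should make explicit is the opening claim that the backward orbit of $2$ inside $\bbf_{p^{2m}}$ is exactly the $\pi$-image of the $\ell$-power roots of unity \emph{lying in} $\bbf_{p^{2m}}$. A priori, a preimage $\theta\in\bbf_{p^{2m}}$ of $2$ under $T_\ell^n$ equals $\pi(\zeta)$ for an $\ell^n$-th root of unity $\zeta$ that is only guaranteed to lie in the quadratic extension $\bbf_{p^{4m}}$ (it solves $z^2-\theta z+1=0$ over $\bbf_{p^{2m}}$), so your identification of the $T_\ell$-tree with the $\iota$-quotient of the $\sigma_\ell$-tree inside $\bbf_{p^{2m}}^\times$ needs one more line: $\pi(\zeta)\in\bbf_{p^{2m}}$ if and only if $\zeta^{p^{2m}}\in\{\zeta,\zeta^{-1}\}$, i.e.\ the order of $\zeta$ divides $p^{2m}-1$ or $p^{2m}+1$; since $\ell$ is odd and $\ell\mid p^{2m}-1$, we have $\ell\nmid p^{2m}+1$, so the second case is vacuous and $\pi(\zeta)\in\bbf_{p^{2m}}$ forces $\zeta\in\bbf_{p^{2m}}$. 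With that sentence added, your bookkeeping of the quotient (the only $\iota$-fixed point in $\mu_{\ell^h}$ is $1$ because $\ell^h$ is odd; the fibers over $\zeta$ and $\zeta^{-1}$ are disjoint and swapped by $\iota$, so branching stays $\ell$-ary) correctly produces the $(\ell-1)/2$ complete $\ell$-ary trees of height $\nu_\ell(p^{2m}-1)-1$ at $2$, and the reflection $x\mapsto -x$ transports the picture to $-2$.
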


\begin{lem}[\cite{Gassert12}, Lemma 3.2] \label{lem:trees}
Let $\mu$ be the least positive integer for which $\nu_\ell(p^{2\mu}-1)\ge 1$. If $\mu \mid m$, then
$$\nu_\ell(p^{2m}-1) = \nu_\ell(p^{2\mu}-1) + \nu_\ell(m).$$
\end{lem}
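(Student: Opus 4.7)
The plan is to apply the classical \emph{lifting-the-exponent} (LTE) lemma for an odd prime $\ell$: if $a, b \in \bbz$ satisfy $\ell \nmid ab$ and $\ell \mid a - b$, then $\nu_\ell(a^k - b^k) = \nu_\ell(a - b) + \nu_\ell(k)$ for every positive integer $k$. The stated identity has exactly this shape after we view $p^{2m} - 1$ as the difference of $(m/\mu)$-th powers with base $p^{2\mu}$.

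First I would set $a = p^{2\mu}$, $b = 1$, and $k = m/\mu$, which is a positive integer by hypothesis. The defining property of $\mu$ gives $\ell \mid a - b$, and $\ell \nmid p$ gives $\ell \nmid a$, so LTE applies and yields
$$\nu_\ell(p^{2m} - 1) \;=\; \nu_\ell\bigl((p^{2\mu})^{m/\mu} - 1\bigr) \;=\; \nu_\ell(p^{2\mu} - 1) + \nu_\ell(m/\mu).$$
Next I would verify that $\nu_\ell(\mu) = 0$, so that $\nu_\ell(m/\mu) = \nu_\ell(m)$ and the formula in the lemma follows. By Fermat's little theorem, $\ell \mid p^{\ell-1} - 1$, and since $\ell$ is odd the integer $(\ell-1)/2$ is positive and satisfies the divisibility condition defining $\mu$. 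Minimality then forces $\mu \le (\ell-1)/2 < \ell$, so $\gcd(\mu, \ell) = 1$ and $\nu_\ell(\mu) = 0$, completing the identity.

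The only genuine content is LTE; if one wished to avoid citing it, the standard proof is a short induction on $\nu_\ell(k)$ whose base case $k = \ell$ rests on the factorization $a^\ell - b^\ell = (a-b)\sum_{i=0}^{\ell-1} a^{\ell-1-i} b^i$ together with the computation that this sum is congruent to $\ell\, a^{\ell-1}$ modulo $\ell^2$ when $\ell \mid a - b$ but $\ell \nmid a$ (expand $b^i = (a + (b-a))^i$ via the binomial theorem, use the hockey-stick identity, and observe that $\ell \mid \binom{\ell}{j+1}$ for $0 \le j \le \ell - 2$). No step poses a real obstacle: the whole lemma reduces to choosing the correct base for LTE and then invoking the elementary coprimality $\gcd(\mu, \ell) = 1$.
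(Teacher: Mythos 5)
Your proof is correct: the hypotheses of lifting-the-exponent are all verified (the definition of $\mu$ gives $\ell \mid p^{2\mu}-1$, and $p \neq \ell$ gives $\ell \nmid p^{2\mu}$), and the observation that $\mu \le (\ell-1)/2 < \ell$ via Fermat's little theorem correctly justifies $\nu_\ell(m/\mu) = \nu_\ell(m)$. Note that the paper itself does not prove this lemma --- it is quoted from the earlier reference [Gassert12, Lemma 3.2] --- so there is no in-paper argument to compare against; the LTE reduction you give is the standard and expected route, and I see no gap in it.
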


We return to the proof of Proposition \ref{prop:2.3}.

\begin{proof} (Proposition \ref{prop:2.3})
Recall that we defined $\mu$ to be the least positive integer for which $\nu_\ell(p^{2\mu}-1) \ge 1$, and we let $h = \nu_\ell(p^{2\mu}-1)$. Therefore, Theorem \ref{th:trees} implies that $\bbf_{p^\mu}$ is the minimal extension of $\bbf_p$ containing preimages of $\overline t$. A complete $\ell$-ary tree of height $h-1$ contains $1+\ell+\cdots+ \ell^{h-1} = \frac{\ell^h-1}{\ell-1}$ points, and thus 
$$\#\overleftarrow{O}_{\overline t}(\bbf_{p^\mu}) = \frac{\ell-1}{2}\cdot \frac{\ell^h-1}{\ell-1} = \frac{\ell^h-1}{2}.$$
Each of these values is a root of an irreducible polynomial of degree $\mu$, hence for $1 \le n \le h$, the degree of each of the irreducible factors of $\overline{\frac{T_\ell^n(x)-t}{x-t}}$ is $\mu$. By Lemma \ref{lem:trees}, the smallest field containing all the roots of $\overline{T_\ell^{h+e}(x)-t}$ is $\bbf_{p^{\mu\ell^e}}$. It follows by induction that $\overline{\frac{T_\ell^{h+e}(x)-t}{x-\overline t}}$ has $\frac{\ell^h-\ell^{h-1}}{2\mu}$ factors of degree $\mu\ell^k$ for $1\le k \le e$ and $\frac{\ell^h-1}{2\mu}$ factors of degree $\mu$.
\end{proof}

\begin{example}
This example is meant as a illustrative description of the previous results. Consider the action of $T_5(x)$ on $\overline \bbf_7$, i.e. $\ell=5$ and $p=7$. One can verify that for this choice of primes, $\mu=2$, $h = 2$, and $T_5(x) = (x \pm2) (x^2\mp x-1)^2$ By Theorem \ref{th:trees}, each of the two solutions to $x^2\pm x-1$ is the root of a complete $5$-ary tree, and these roots are elements of $\bbf_{7^2}$. Moreover, this theorem and the associated lemma imply that every vertex at height $k\ge1$ has weight $\mu\ell^{k-1}$. See Figure \ref{fig:tree_fig}.
\end{example}

\begin{itsapicture}
\begin{figure}[h!]
\centering
\begin{tikzpicture}[>=stealth,every node/.style={circle}]
\draw (0,-1) node[red] (origin) {\Huge$\bullet$}
	node[right] {$\pm2$};
\draw (origin) edge[loop below] (origin);
\foreach \x in {0,1} {
	\node[brown] (\x) at (180*\x:1) {$\blacksquare$};
	\draw[->>] (\x) -- (origin);
	\foreach \y in {2,...,6} {
		\node[brown] (\y) at (180*\x+180/5*\y-180/5*4:3) {$\blacksquare$};
		\draw[->] (\y) -- (\x);
		\foreach \z in {7,...,11} {
			\node[blue] (\z) at (180*\x+180/5*\y-180/5*4+180/25*\z-180/25*9:5) {$\blacktriangle$};
			\draw[->] (\z) -- (\y);
			\foreach \w in {12,...,16} {
				\node[green] (\w) at (180*\x+180/5*\y-180/5*4+180/25*\z-180/25*9+180/125*\w-180/125*14:7.5) {$\bullet$};
				\draw (\w) -- (\z);
				}
			}
		}
	}
\end{tikzpicture}
\caption{The roots of $T_5^4(x)-\overline t$ in $\overline\bbf_7$. A double arrow corresponds to the multiplicity of the preimage. The color and shape of the vertex corresponds to its weight: 1 ({\Huge{\color{red}$\bullet$}}); 2 ({\color{brown}$\blacksquare$}); 10 ({\color{blue}$\blacktriangle$}); 50 ({\color{green}$\bullet$}).} \label{fig:tree_fig}
\end{figure}
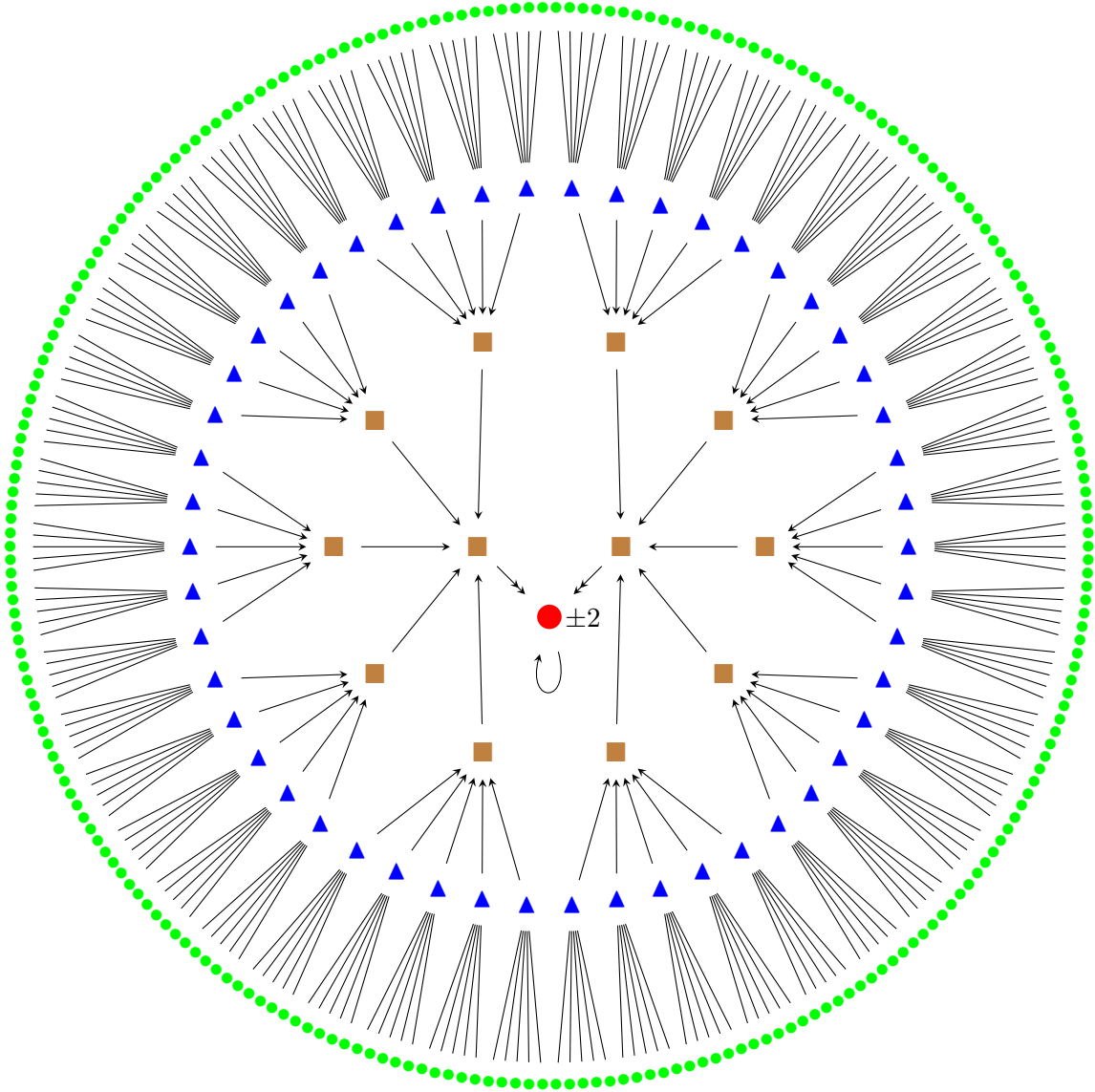
\end{itsapicture}

Lastly we provide a formula for the discriminant of $\Phi(x) = T_\ell^n(x)-t$. A general formula for the discriminant of an iterated polynomial is given by Aitken, Hajir, and Maire \cite{AHM05}. The following formula is a direct consequence of their result.

\begin{prop}[\cite{Gassert12}, Corollary 3.6.] \label{prop:disc} 
We have $$D(\Phi) = \ell^{n\ell^n}(4-t^2)^{(\ell^n-1)/2}.$$
\end{prop}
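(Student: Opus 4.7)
My plan is to compute $D(\Phi)$ directly from the resultant formula $D(\Phi) = (-1)^{\ell^n(\ell^n-1)/2}\prod_{\theta}\Phi'(\theta)$, where $\theta$ ranges over the $\ell^n$ roots of $\Phi$. Note that since $\ell^n$ is odd, the sign simplifies to $(-1)^{(\ell^n-1)/2}$. The plan breaks into computing (a) the derivative via the chain rule and (b) a product of resultants using the trigonometric description of Chebyshev polynomials.

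The chain rule gives $\Phi'(x) = (T_\ell^n)'(x) = \prod_{k=0}^{n-1} T_\ell'(T_\ell^k(x))$, and property (2) in the preliminaries yields $T_\ell'(x) = \ell U_{\ell-1}(x)$. Multiplying over the $\ell^n$ roots of $\Phi$ pulls out a factor of $\ell^{n\ell^n}$, and reduces the problem to computing, for each $0 \le k \le n-1$, the product
\[
P_k := \prod_{\theta : \Phi(\theta) = 0} U_{\ell-1}(T_\ell^k(\theta)).
\]
Since $\Phi$ is irreducible (hence separable) and $T_\ell^n(x) - t = T_\ell^{n-k}(T_\ell^k(x)) - t$, as $\theta$ ranges over the roots of $\Phi$ the value $T_\ell^k(\theta)$ ranges over the $\ell^{n-k}$ roots of $T_\ell^{n-k}(x)-t$, each occurring exactly $\ell^k$ times. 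Thus
\[
P_k = \operatorname{Res}\!\left(T_\ell^{n-k}(x) - t,\; U_{\ell-1}(x)\right)^{\ell^k}.
\]

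Now I would flip the resultant (both polynomials being monic of even-degree product, so no sign) and use property (6): the roots of $U_{\ell-1}(x)$ are $\beta_j = 2\cos(j\pi/\ell)$ for $j = 1, \ldots, \ell-1$, and $T_\ell(\beta_j) = 2\cos(j\pi) = 2(-1)^j \in \{\pm 2\}$. Since $T_\ell(\pm 2) = \pm 2$ for odd $\ell$, every further iterate is fixed, so $T_\ell^{n-k}(\beta_j) = 2(-1)^j$ for all $n-k \ge 1$. Therefore
\[
\operatorname{Res}\!\left(T_\ell^{n-k}(x)-t,\; U_{\ell-1}(x)\right) = \prod_{j=1}^{\ell-1}\bigl(2(-1)^j - t\bigr) = \bigl((-2-t)(2-t)\bigr)^{(\ell-1)/2} = (t^2-4)^{(\ell-1)/2}.
\]

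Combining everything,
\[
\prod_\theta \Phi'(\theta) = \ell^{n\ell^n} \prod_{k=0}^{n-1} (t^2-4)^{(\ell-1)\ell^k/2} = \ell^{n\ell^n}(t^2-4)^{(\ell^n-1)/2},
\]
using $(\ell-1)(1+\ell+\cdots+\ell^{n-1}) = \ell^n - 1$. Writing $(t^2-4)^{(\ell^n-1)/2} = (-1)^{(\ell^n-1)/2}(4-t^2)^{(\ell^n-1)/2}$ and multiplying by the outer sign $(-1)^{(\ell^n-1)/2}$ gives the claimed formula. The main pitfall I anticipate is bookkeeping: verifying the claim that each root of $T_\ell^{n-k}(x)-t$ is hit $\ell^k$ times (which relies on the separability of $\Phi$ inherited from its irreducibility) and tracking the two cancelling signs of $(-1)^{(\ell^n-1)/2}$.
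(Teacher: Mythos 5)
Your proof is correct. The paper itself gives no argument for this proposition: it simply cites \cite{Gassert12}, Corollary 3.6, noting that the formula is ``a direct consequence'' of the general Aitken--Hajir--Maire formula for discriminants of iterated polynomials. What you have done is reprove that specialization from scratch, and the computation is sound at every step: the sign $(-1)^{\ell^n(\ell^n-1)/2}=(-1)^{(\ell^n-1)/2}$ is right because $\ell^n$ is odd; the chain-rule factorization $\Phi'(x)=\prod_{k=0}^{n-1}\ell\,U_{\ell-1}(T_\ell^k(x))$ correctly uses property (2); the fiber-counting claim (each root of $T_\ell^{n-k}(x)-t$ is hit exactly $\ell^k$ times) does follow from separability of $\Phi$, since a repeated root of $T_\ell^{n-k}(x)-t$ or of any fiber $T_\ell^k(x)=\eta$ would force a repeated root of $\Phi$; the flip of the resultant carries no sign because $\ell^{n-k}(\ell-1)$ is even; and the evaluation $T_\ell^{n-k}(2\cos(j\pi/\ell))=2(-1)^j$ together with $(-2-t)(2-t)=t^2-4$ and the geometric sum $(\ell-1)(1+\ell+\cdots+\ell^{n-1})=\ell^n-1$ gives exactly the claimed exponent. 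The comparison, then, is that the AHM route packages your steps (a) and (b) into a general recursion expressing $D(f^n-t)$ in terms of $D(f^{n-1}-t)$ and the critical orbit of $f$, which one then feeds the Chebyshev data (critical points are the roots of $U_{\ell-1}$, critical values are $\pm2$, both fixed by $T_\ell$); your argument buys self-containedness and transparency about where each factor comes from, at the cost of redoing bookkeeping that the cited machinery already handles, and it generalizes immediately to any postcritically finite situation where the critical values are fixed points.
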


\section{Monogenic number fields} \label{sec:monogenic}
In this section we identify sufficient conditions on $t$ for which $\ind(\Phi) = 1$, or equivalently $D(\Phi) = \Delta(K)$. In particular, these are sufficient conditions for $K$ to be a monogenic number field, meaning that the ring of integers $\cal O_K$ has a power basis. Our result gives rise to a large class of infinite towers of monogenic, and in general non-abelian, number fields. Consequently, for any (odd) prime $\ell$ and positive integer $n$, there are infinitely many monogenic number fields of degree $\ell^n$. For a discussion of previous results regarding monogenic number fields, see Narkiewicz \cite{Narkiewicz04}.

\begin{defn}
We say that an order $\cal O \subset \cal O_K$ is $p$-maximal if $p \nmid [\cal O_K \colon \cal O]$.
\end{defn}

Our result is a consequence of the following theorem by Dedekind, which appears in Cohen \cite{Cohen95}.

\begin{thm}[Dedekind's criterion] Let $\bbq(\theta)$ be a number field, $T \in \bbz[X]$ the monic minimal polynomial of $\theta$ and let $p$ be a prime number. Denote by $\overline{\phantom{t}}$ reduction modulo $p$. Let 
$$\overline T(X) = \prod_{i=1}^l \overline{t_i}(X)^{e_i}$$
be the factorization of $T(X)$ modulo $p$ in $\bbf_p[X]$, and set 
$$g(X) = \prod_{i=1}^l t_i(X)$$
where the $t_i \in \bbz[X]$ are arbitrary monic lifts of $\overline{t_i}$. Then
\begin{enumerate}
\item The $p$-radical $I_p$ of $\bbz[\theta]$ at $p$ is given by
$$I_p = p\bbz[\theta] + g(\theta)\bbz[\theta].$$
In other words, $x= A(\theta)\in I_p$ if and only if $\overline g \mid \overline A$.

\item Let $h(X) \in \bbz[X]$ be a monic lift of $\overline T(X)/\overline g(X)$ and set
$$f(X) = \frac{g(X)h(X) - T(X)}{p} \in \bbz[X].$$
Then $\bbz[\theta]$ is $p$-maximal if and only if $\gcd(\overline f, \overline g, \overline h) = 1$ in $\bbf_p[X]$.

\item More generally, if $U$ is a monic lift of $\overline T /(\overline f, \overline g, \overline h)$ to $\bbz[X]$, we have
$$\cal O' := \bbz[\theta] + \frac{1}{p}U(\theta)\bbz[\theta]$$
and if $m = \deg(\overline f, \overline g, \overline h)$, then $[\cal O'\colon \bbz[\theta]] = p^m$, hence $\disc(\cal O') = \disc(T)/p^{2m}$.
\end{enumerate}
\end{thm}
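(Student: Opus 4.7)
The plan is to prove the three parts in sequence: use the ring-theoretic structure of $\bbz[\theta]/p\bbz[\theta]$ for part (1), then build an explicit super-order of $\bbz[\theta]$ inside $\cal O_K$ for parts (2) and (3).

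For part (1), I would identify $\bbz[\theta]/p\bbz[\theta] \cong \bbf_p[X]/(\overline T)$ via $\theta \leftrightarrow X$ and apply the Chinese Remainder Theorem to the factorization $\overline T = \prod_i \overline{t_i}^{\,e_i}$ to obtain $\prod_i \bbf_p[X]/(\overline{t_i}^{\,e_i})$. The nilradical of the $i$-th factor is the principal ideal $(\overline{t_i})/(\overline{t_i}^{\,e_i})$, so the total nilradical is the image of $\overline g = \prod_i \overline{t_i}$. Pulling back to $\bbz[\theta]$ yields $I_p = p\bbz[\theta] + g(\theta)\bbz[\theta]$, and the stated characterization $A(\theta) \in I_p \iff \overline g \mid \overline A$ is immediate from this identification.

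For parts (2) and (3), I would center the argument on the element $\alpha := U(\theta)/p$. The starting identities come from $T = gh - pf$, giving $\overline T = \overline g\,\overline h$; since $\overline M := \gcd(\overline f, \overline g, \overline h)$ divides both $\overline g$ and $\overline h$, we have $\overline M^{2} \mid \overline T$, and hence $\overline M \mid \overline U = \overline T/\overline M$. Step A is to show $\alpha \in \cal O_K$ by exhibiting a monic integer polynomial it satisfies: the characteristic polynomial $\chi_{U(\theta)}(X) \in \bbz[X]$ has its coefficient of $X^{d-k}$ divisible by $p^k$, so the substitution $X \mapsto pX$ followed by division by $p^d$ yields a monic integer polynomial vanishing at $\alpha$. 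The requisite divisibility is verified locally at each prime $v \mid p$ of $\cal O_K$ by analyzing how the factors of $\overline U$ contribute to $v(U(\theta))$. Step B computes $[\cal O':\bbz[\theta]]$ via the surjection $\varphi\colon \bbz[\theta]/p\bbz[\theta] \to \cal O'/\bbz[\theta]$, $\overline\beta \mapsto \tfrac{1}{p}U(\theta)\beta + \bbz[\theta]$; its kernel consists of $\overline\beta$ with $\overline U\,\overline\beta \equiv 0 \pmod{\overline T}$, which (since $\overline T = \overline U\,\overline M$) equals $(\overline M)/(\overline T)$, a subspace of $\bbf_p$-dimension $d-m$. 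Hence $|\cal O'/\bbz[\theta]| = p^m$, and the discriminant formula follows from \eqref{eq:1.3}.

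For part (2), the forward direction is a consequence of Step B: $p$-maximality of $\bbz[\theta]$ forces $p \nmid [\cal O':\bbz[\theta]] = p^m$, hence $m = 0$. The converse requires showing that when $\overline M = 1$, no $\beta \in \cal O_K \setminus \bbz[\theta]$ satisfies $p\beta \in \bbz[\theta]$; writing $p\beta = A(\theta)$ and combining part (1) with the integrality of $\beta$, one deduces $\overline{t_i}^{\,e_i} \mid \overline A$ for each $i$, so $A(\theta) \in p\bbz[\theta]$ and thus $\beta \in \bbz[\theta]$. The main obstacle throughout is Step A, the integrality of $\alpha$: while the CRT decomposition and the congruence $T \equiv UM \pmod p$ give the combinatorial skeleton, proving the divisibility of the coefficients of $\chi_{U(\theta)}$ demands a careful local valuation analysis in $\cal O_K \otimes \bbz_p$ and is the technical heart of the argument.
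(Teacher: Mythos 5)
A preliminary observation: the paper offers no proof of this theorem at all --- it is quoted from Cohen \cite{Cohen95} --- so your proposal can only be measured against the standard argument in that source. Your part (1) is exactly the standard CRT/nilradical computation and is fine, as is your Step B counting $[\cal O' : \bbz[\theta]]$ via the kernel of $\overline\beta \mapsto \tfrac1p U(\theta)\beta$. The problems are in the two places you yourself flag as delicate, and in both cases the issue is a missing idea rather than a missing detail.

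First, Step A (integrality of $\alpha = U(\theta)/p$) is the load-bearing step of parts (2)--(3), and what you give is a plan that is in tension with itself: estimating $v(U(\theta))$ at the primes $v \mid p$ of $\cal O_K$ presupposes knowledge of how $p$ splits in $\cal O_K$, and the correspondence between irreducible factors of $\overline T$ in $\bbf_p[X]$ and the primes above $p$ is precisely what breaks down when $p \mid [\cal O_K : \bbz[\theta]]$ --- the situation the theorem is designed to detect. A crude count ($v(t_i(\theta)) \ge 1$ for the relevant factor) does not reach the needed bound $v(U(\theta)) \ge v(p)$ when $\overline M$ divides $\overline{t_i}$, so the "careful local valuation analysis" you defer is not routine. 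The standard proof sidesteps this entirely via the multiplier ring: one verifies by a purely polynomial computation (using $T = gh - pf$ modulo $p$ and $p^2$) that $\tfrac1p U(\theta)\, I_p \subseteq I_p$, and integrality is then automatic because $I_p$ is a finitely generated faithful $\bbz[\theta]$-module (determinant trick). Indeed $(I_p : I_p)$ is the organizing object of the whole theorem: it equals $\cal O'$, and the Pohst--Zassenhaus criterion ($\bbz[\theta]$ is $p$-maximal iff $(I_p:I_p) = \bbz[\theta]$) delivers part (2) in the same stroke.

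Second, your converse of part (2) contains a non sequitur. From $p\beta = A(\theta)$ with $\beta$ integral, part (1) yields only $\overline g \mid \overline A$, i.e.\ each $\overline{t_i}$ divides $\overline A$ to the first power; you assert $\overline{t_i}^{\,e_i} \mid \overline A$, i.e.\ $\overline T \mid \overline A$, which is the entire content of the converse. Nowhere in your sketch does the hypothesis $\gcd(\overline f, \overline g, \overline h) = 1$ appear, yet the implication you claim is false without it (otherwise $\bbz[\theta]$ would always be $p$-maximal). The missing argument writes $A = gB + pC$, uses integrality of $\beta$ to force $\tfrac1p g(\theta)B(\theta) \in \cal O_K$, and bootstraps divisibility by the remaining factors, with the gcd condition entering exactly when one must divide by $\overline f$. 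As it stands, both directions of (2) and the integrality claim underlying (3) are unproved.
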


\begin{remark}
The ring $\cal O'$ in part (3) of Dedekind's criterion is subring of the ring of integers $\cal O_{\bbq(\theta)}$. However, $\cal O'$ is not necessarily $p$-maximal, i.e. $[{\cal O}_K \colon \cal O']$ may be divisible by $p$.
\end{remark}

We now work towards our main result regarding towers of monogenic extensions. The first result specifies conditions on $t$ for which $K$ is monogenic.

\begin{thm} \label{th:dc1} 
If $\Phi(x)=T_\ell^n(x)-t$ is irreducible, then $D(\Phi) = \Delta(K)$ if and only if 
\begin{enumerate}
\item $\Phi(t) \not\equiv 0 \mmod {\ell^2}$ and
\item both $t-2$ and $t+2$ are square-free.
\end{enumerate}
\end{thm}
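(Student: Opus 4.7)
The plan is to use equation \eqref{eq:1.3} to rephrase the statement: $\Delta(K) = D(\Phi)$ if and only if $\ind(\Phi) = 1$, if and only if $p \nmid \ind(\Phi)$ for every prime $p$. Proposition \ref{prop:disc} shows that the only candidates are $p = \ell$ and the prime divisors of $(t - 2)(t + 2)$, so the proof reduces to applying Dedekind's criterion at each of these primes.

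I would dispatch $p = \ell$ first. Lemma \ref{lem:chebymodl} gives $\overline{\Phi}(x) \equiv (x - t)^{\ell^n} \pmod{\ell}$, so I take $g(x) = x - t$ and $h(x) = (x - t)^{\ell^n - 1}$. The criterion then reduces to checking that $(x - t) \nmid \overline{f}$ in $\bbf_\ell[x]$, where $f(x) = \bigl((x - t)^{\ell^n} - \Phi(x)\bigr)/\ell$. Evaluating at $x = t$ yields $f(t) = -\Phi(t)/\ell$, so the condition $\ell \nmid \ind(\Phi)$ is exactly $\Phi(t) \not\equiv 0 \pmod{\ell^2}$, which is condition (1).

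Next, at any prime $p \neq \ell$ with $p \mid t - 2$, I would invoke Lemma \ref{lem:2.1} to rewrite $\Phi(x) = (x - 2)\tau(x)^2 - (t - 2)$, so $\overline{\Phi}(x) = (x - \overline{t})\overline{\tau}(x)^2$. Assuming (and this is the crux, see below) that $\overline{\tau}$ is squarefree and coprime to $x - \overline{t}$ in $\bbf_p[x]$, I can choose $g(x) = (x - 2)\tau(x)$ and $h(x) = \tau(x)$ as lifts of the radical and cofactor; then
\[
g(x)h(x) - \Phi(x) = (x - 2)\tau(x)^2 - \bigl((x - 2)\tau(x)^2 - (t - 2)\bigr) = t - 2,
\]
so $f(x) = (t - 2)/p$ is a nonzero constant modulo $p$ precisely when $p^2 \nmid t - 2$. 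In that case the Dedekind gcd is trivial, giving $p \nmid \ind(\Phi)$; when $p^2 \mid t - 2$, the constant vanishes mod $p$ and the gcd equals $\overline{\tau}$, giving $p \mid \ind(\Phi)$. The case $p \mid t + 2$ reduces to the previous one via $x \mapsto -x$ (using $T_\ell^n(-x) = -T_\ell^n(x)$, valid since $\ell$ is odd), and the argument also accommodates $p = 2$.

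The main technical obstacle will be the auxiliary claim that $\overline{\tau}$ is squarefree and coprime to $x - \overline{t}$ in $\bbf_p[x]$ whenever $p \neq \ell$. Coprimality I would extract by differentiating $T_\ell^n(x) - 2 = (x - 2)\tau(x)^2$ and evaluating at $x = 2$: using $(T_\ell^n)'(x) = \ell^n U_{\ell^n - 1}(x)$ together with $U_d(2) = d + 1$ (obtained by taking $\theta \to 0$ in the trigonometric formula), this forces $\tau(2)^2 = \ell^{2n}$, so $\tau(2) = \pm \ell^n$ is a $p$-adic unit. Squarefreeness follows from the factorization $\tau = \prod_{k=1}^n \phi_k$ over $\bbq$, where $\phi_k$ is the minimal polynomial of $\zeta_{\ell^k} + \zeta_{\ell^k}^{-1}$; each $\phi_k$ generates a subfield of $\bbq(\zeta_{\ell^n})^+$, which is unramified at primes $p \neq \ell$, and the reductions of $\zeta_{\ell^k}$ modulo such $p$ retain their exact multiplicative order, so the roots of $\overline{\phi_j}$ and $\overline{\phi_k}$ remain disjoint for $j \neq k$.
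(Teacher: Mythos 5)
Your proposal is correct and follows essentially the same route as the paper: reduce via Proposition \ref{prop:disc} to the primes $\ell$ and $p \mid t^2-4$, then apply Dedekind's criterion with exactly the same choices of $g$ and $h$ (your observation that $f(x)=(t-2)/p$ is a constant is just a cleaner packaging of the paper's evaluation of $f$ at a root $\alpha$ of $\tau$). The one substantive difference is that you explicitly verify that $\overline\tau$ is separable and coprime to $x-\overline t$ — a hypothesis the paper's proof uses silently (it is supplied, for odd $p\neq\ell$, by Proposition \ref{prop:2.3}) — and your root-of-unity argument for this is sound.
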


\begin{proof} 
Let $\theta$ be a root of $\Phi$. The discriminants $\Delta(K)$ and $D(\Phi)$ are equal if and only if $\bbz[\theta]$ is $p$-maximal for every prime $p$. We do not need to check every prime; the only primes for which $\bbz[\theta]$ may not maximal are the primes that divide $D(\Phi)$ with multiplicity at least 2.

By Proposition \ref{prop:disc}, is sufficient to check $\ell$ and the primes that divide $t^2-4$.

We begin by using Dedekind's criterion to identify the condition on $t$ for which $\bbz[\theta]$ is $\ell$-maximal. By Lemma \ref{lem:chebymodl},
$$\Phi(x) \equiv x^{\ell^n} - t \equiv (x - t)^{\ell^n} \pmod \ell,$$
and we write
$$g(x) = x - t, \quad h(x) = (x - t)^{\ell^n-1}, \quad f(x) = \frac{(x - t)^{\ell^n} - \Phi(x)}{\ell}.$$
The ring $\bbz[\theta]$ is $\ell$-maximal if and only if $\gcd(\overline f, \overline g, \overline h) = 1$, which holds if and only if $t$ is not a root of $\overline f$ modulo $\ell$. Evaluating $f(t)$, we see that
$$f(t) = \frac{\Phi(t)}{\ell} \not\equiv 0 \pmod \ell, \text{\quad and equivalently \quad} \Phi(t) \not \equiv 0 \pmod {\ell^2}.$$

Now, let $p$ be a prime dividing $t^2-4$, i.e. $p \mid (t-2)(t+2)$. In this case, $t \equiv \pm2 \mmod p$, and we write $\overline t \in \{2,-2\}$ for the reduction of $t$ modulo $p$. It follows from Lemma \ref{lem:2.1} that $\Phi(x) \equiv (x-\overline t)\tau(x)^2 \mmod p$, where $T_\ell^n(x)-\overline t = (x-\overline t)\tau(x)^2$. We seek to apply Dedekind's criterion, so we write
$$g(x) = (x-\overline t) \tau(x), \quad h(x) = \tau(x), \quad f(x) = \frac{(x-\overline t)\tau(x)^2 - \Phi(x)}{p},$$
and proceed to show that $\gcd(\overline f, \overline g, \overline h) =1$.

Let $\alpha$ be a root of $\tau$ modulo $p$. Then $\gcd(\overline f, \overline g, \overline h) = 1$ if and only if $\alpha$ is not a root of $f$ modulo $p$. Evaluating $f$ at $\alpha$, we see that
$$f(\alpha) = -\frac{\Phi(\alpha)}{p}\equiv 0 \pmod p \text{ \quad if and only if \quad } \Phi(\alpha) \equiv 0 \pmod{p^2}.$$
Note that
\begin{align*}
\Phi(\alpha) = T_\ell^n(\alpha)-t = T_\ell^n(\alpha)-\overline t + \overline t - t  = (x-\overline t)\tau(\alpha)^2 + \overline t - t \equiv t-\overline t \pmod{p^2}.
\end{align*}
Thus $\bbz[\theta]$ is $p$-maximal if and only if $t - \overline t \not\equiv 0 \mmod{p^2}$, concluding the proof.
\end{proof}

The remainder of this section is dedicated to expanding this result.

\begin{prop} \label{prop:dc2} For any integers $a$ and $b$, 
$$T_\ell(a) \equiv T_\ell(b) \pmod{\ell^2} \mbox{ if and only if } a \equiv b \pmod \ell.$$
\end{prop}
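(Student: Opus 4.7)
The plan is to prove both implications using the structural relationship between $T_\ell$ and $U_{\ell-1}$, specifically the fact that $T_\ell'(x) = \ell\, U_{\ell-1}(x)$, which is an immediate consequence of property (2) in Section \ref{sec:prelim}.

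For the forward direction, suppose $a \equiv b \pmod{\ell}$, and write $a = b + \ell c$ for some integer $c$. For any polynomial $P(x) \in \bbz[x]$, binomial expansion gives
\begin{align*}
P(b+\ell c) \equiv P(b) + \ell c\, P'(b) \pmod{\ell^2},
\end{align*}
since every higher-order term carries a factor of $(\ell c)^k$ with $k\ge 2$. Applying this to $P=T_\ell$ and using $T_\ell'(b)=\ell\, U_{\ell-1}(b)$, the linear correction becomes $\ell c \cdot \ell\, U_{\ell-1}(b) = \ell^2 c\, U_{\ell-1}(b) \equiv 0 \pmod{\ell^2}$. Hence $T_\ell(a) \equiv T_\ell(b) \pmod{\ell^2}$.

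For the reverse direction, suppose $T_\ell(a)\equiv T_\ell(b)\pmod{\ell^2}$. In particular, this congruence holds modulo $\ell$. By Lemma \ref{lem:chebymodl} (applied with $n=1$), $T_\ell(x)\equiv x^\ell \pmod{\ell}$, so $T_\ell(a)\equiv a^\ell$ and $T_\ell(b)\equiv b^\ell$ modulo $\ell$. Fermat's little theorem then gives $a^\ell\equiv a$ and $b^\ell\equiv b$ modulo $\ell$, so the original congruence reduces to $a\equiv b\pmod{\ell}$.

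There is no real obstacle here — the proof is essentially a one-line Taylor expansion once one observes that $T_\ell'$ is divisible by $\ell$, which is the defining feature ensuring $T_\ell$ behaves like a lift of Frobenius modulo $\ell^2$. The only care needed is to verify that the binomial congruence is applied to a polynomial with integer coefficients (so that no denominators arise), which is why we invoke property (2) in the form $T_\ell'(x) = \ell\, U_{\ell-1}(x)$ with $U_{\ell-1} \in \bbz[x]$, rather than working formally with Taylor series.
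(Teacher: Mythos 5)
Your proof is correct, but the direction ``$a\equiv b \pmod{\ell}$ implies $T_\ell(a)\equiv T_\ell(b)\pmod{\ell^2}$'' is handled by a genuinely different mechanism than in the paper. The paper substitutes $a=q\ell+r$ directly into the explicit formula $T_\ell(x)=\sum_k(-1)^k\frac{(\ell-k-1)!}{k!(\ell-2k)!}\ell\,x^{\ell-2k}$ and expands each $(q\ell+r)^{\ell-2k}$ binomially, observing that every term with $i\ge 1$ picks up at least $\ell^2$; this is a term-by-term bookkeeping of powers of $\ell$ tied to the specific coefficients of $T_\ell$. You instead use the general first-order congruence $P(b+\ell c)\equiv P(b)+\ell c\,P'(b)\pmod{\ell^2}$ valid for any $P\in\bbz[x]$, and then kill the linear term via the identity $T_\ell'(x)=\ell\,U_{\ell-1}(x)$ with $U_{\ell-1}\in\bbz[x]$ (property (2) of Section \ref{sec:prelim}). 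Your route is more conceptual --- it isolates exactly why $T_\ell$ behaves as a Frobenius lift, namely that its derivative is divisible by $\ell$ --- and it sidesteps a subtlety in the paper's expansion (for $k=0$ the displayed coefficient $\frac{(\ell-k-1)!}{k!(\ell-2k)!}\ell$ equals $1$, so the claimed factor $\ell^{i+1}$ in that term actually relies on the divisibility of $\binom{\ell}{i}$ by $\ell$). The other implication is proved identically in both arguments: reduce modulo $\ell$, apply Lemma \ref{lem:chebymodl}, and finish with Fermat's little theorem. The only cosmetic quibble is that what you label the ``forward'' direction is the ``if'' half of the stated equivalence, but since both implications are established this is immaterial.
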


\begin{proof} Suppose that $T_\ell(a) \equiv T_\ell(b) \mmod{\ell^2}$. By Lemma \ref{lem:chebymodl}, $T_\ell(x) = x^\ell + \ell \cdot g(x)$, where $g(x)$ is a polynomial of degree $\ell-2$. Hence
\begin{align*}
T_\ell(a) \equiv T_\ell(b) \pmod{\ell^2} &\Rightarrow a^\ell + \ell \,g(a) \equiv b^\ell + \ell \,g(b) \pmod{\ell^2}\\
&\Rightarrow a^\ell \equiv b^\ell \pmod \ell\\
&\Rightarrow a \equiv b \pmod \ell.
\end{align*}
For the converse statement, let $a \in \bbz$ and write $a = q\ell + r$ such that $0 \le r < \ell$. It suffices to show that $T_\ell(a) \equiv T_\ell(r) \mmod{\ell^2}$.
\begin{align*}
T_\ell(a) = T_\ell(q\ell + r) &= \sum_{k=0}^{\floor{\ell/2}}(-1)^k \frac{(\ell-k-1)!}{k!(\ell-2k)!} \ell(q\ell+r)^{\ell-2k}\\
&= \sum_{k=0}^{\floor{\ell/2}}(-1)^k \frac{(\ell-k-1)!}{k!(\ell-2k)!}\sum_{i=0}^{\ell-2k}{\ell-2k \choose i}q^i\ell^{i+1}r^{\ell-2k-i}\\
&\equiv \sum_{k=0}^{\floor{ \ell/2}}(-1)^k \frac{(\ell-k-1)!}{k!(\ell-2k)!} \ell r^{\ell-2k} \\
&\equiv T_\ell(r) \pmod{\ell^2}.
\end{align*}
\end{proof}

We are now ready to prove the main theorem of this section. 

\begin{thm}\label{th:monogenic}
If $\Phi(x) = T_\ell^n(x)-t$ is irreducible, then $D(\Phi) = \Delta(K)$ if and only if 
\begin{enumerate}
\item $T_\ell(t) - t \not\equiv 0 \mmod {\ell^2}$ and
\item both $t-2$ and $t+2$ are square-free.
\end{enumerate}
\end{thm}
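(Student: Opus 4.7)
The plan is to deduce this from Theorem \ref{th:dc1} by showing that its condition (1), namely $\Phi(t) \not\equiv 0 \pmod{\ell^2}$, is equivalent to the condition $T_\ell(t) - t \not\equiv 0 \pmod{\ell^2}$ stated here. Condition (2) is identical in both statements, so it carries over verbatim. Thus the entire content of the theorem reduces to the single claim
\[
T_\ell^n(t) \equiv t \pmod{\ell^2} \iff T_\ell(t) \equiv t \pmod{\ell^2}.
\]

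The key tool is Proposition \ref{prop:dc2}, which says $T_\ell(a) \equiv T_\ell(b) \pmod{\ell^2}$ iff $a \equiv b \pmod{\ell}$. First I would observe that Lemma \ref{lem:chebymodl} (or Fermat's little theorem) gives $T_\ell(t) \equiv t^\ell \equiv t \pmod{\ell}$ for every integer $t$. Applying Proposition \ref{prop:dc2} with $a = T_\ell(t)$ and $b = t$ then yields $T_\ell^2(t) \equiv T_\ell(t) \pmod{\ell^2}$. Reducing this congruence modulo $\ell$ shows $T_\ell^2(t) \equiv T_\ell(t) \pmod{\ell}$, which in turn (Proposition \ref{prop:dc2} again) lifts to $T_\ell^3(t) \equiv T_\ell^2(t) \pmod{\ell^2}$. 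A straightforward induction on $k$ then gives $T_\ell^{k+1}(t) \equiv T_\ell^k(t) \pmod{\ell^2}$ for every $k \geq 1$, and hence
\[
T_\ell^n(t) \equiv T_\ell(t) \pmod{\ell^2}
\]
for all $n \geq 1$.

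From this chain of congruences, $\Phi(t) = T_\ell^n(t) - t \equiv T_\ell(t) - t \pmod{\ell^2}$, so the two conditions are indeed equivalent and the theorem follows by direct appeal to Theorem \ref{th:dc1}. There is no real obstacle here; the only subtle point is remembering that Proposition \ref{prop:dc2} requires the hypothesis $a \equiv b \pmod \ell$ at each stage of the induction, which is why the base congruence $T_\ell(t) \equiv t \pmod \ell$ from Lemma \ref{lem:chebymodl} is essential to get the induction off the ground.
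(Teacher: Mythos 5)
Your proposal is correct and follows essentially the same route as the paper: both reduce the theorem to the claim $T_\ell^n(t) \equiv T_\ell(t) \pmod{\ell^2}$ via Proposition \ref{prop:dc2} together with the congruence $T_\ell(a)\equiv a \pmod{\ell}$, and then invoke Theorem \ref{th:dc1}. The only cosmetic difference is that the paper applies Proposition \ref{prop:dc2} once with $a=T_\ell^{n-1}(t)$, $b=t$, whereas you chain it through an induction on the iterates.
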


\begin{proof} Note that for $n \ge 1$, $T^{n-1}_\ell(t) \equiv t^{\ell^{n-1}} \equiv t \mmod \ell$. By Proposition \ref{prop:dc2}, 
$$T^n_\ell(t) = T_\ell(T^{n-1}_\ell(t)) \equiv T_\ell(t) \pmod{\ell^2}.$$
Thus 
$$T^n_\ell(t) \equiv t \pmod{\ell^2} \mbox{\quad if and only if \quad } T_\ell(t) \equiv t \pmod{\ell^2}.$$ The result is now an immediate consequence of Theorem \ref{th:dc1}.
\end{proof}

\begin{remark}
What we have shows is that the conditions for which $D(\Phi) = \Delta(K)$ only depend on $\ell$ and $t$. By picking a compatible sequence of preimages of $t$, $\{t=\theta_0, \theta_1, \ldots\}$, such that $T_\ell(\theta_n) = \theta_{n-1}$, we designate a tower of number fields
$$\bbq \subset K_1 \subset K_2 \subset \cdots$$
where $K_n = \bbq(\theta_n)$. By our result, $K_1$ is monogenic if and only if $K_n$ is monogenic, and in particular we have identified a two parameter family of towers of monogenic number fields.
\end{remark}

We conclude this section by identifying the values $t$ for which $\Phi(t)\equiv 0 \mmod{\ell^2}$.

\begin{thm} \label{th:zeromodellsquared}
$\Phi(t) \equiv 0 \mmod {\ell^2}$ if and only if $t \equiv T_\ell(a)$ for some $a \in \bbz/\ell^2\bbz$.
\end{thm}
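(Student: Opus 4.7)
The plan is to reduce the statement to a question about fixed points of $T_\ell$ modulo $\ell^2$, which Proposition \ref{prop:dc2} essentially answers. First, I would unpack $\Phi(t) = T_\ell^n(t) - t$ and use the observation already made in the proof of Theorem \ref{th:monogenic}: since $T_\ell^{n-1}(t) \equiv t \pmod{\ell}$ (by Lemma \ref{lem:chebymodl} or Fermat), Proposition \ref{prop:dc2} gives $T_\ell^n(t) = T_\ell(T_\ell^{n-1}(t)) \equiv T_\ell(t) \pmod{\ell^2}$. Therefore $\Phi(t) \equiv 0 \pmod{\ell^2}$ is equivalent to $T_\ell(t) \equiv t \pmod{\ell^2}$, and the theorem reduces to showing that this fixed-point condition holds if and only if $t$ lies in the image of $T_\ell$ on $\bbz/\ell^2\bbz$.

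The forward implication is then immediate: if $T_\ell(t) \equiv t \pmod{\ell^2}$, simply take $a = t$, and $t \equiv T_\ell(a) \pmod{\ell^2}$ is witnessed. For the converse, suppose $t \equiv T_\ell(a) \pmod{\ell^2}$ for some $a \in \bbz/\ell^2\bbz$. From Lemma \ref{lem:chebymodl} and Fermat's little theorem, $T_\ell(a) \equiv a^\ell \equiv a \pmod{\ell}$, so reducing the hypothesis modulo $\ell$ yields $t \equiv a \pmod{\ell}$. Proposition \ref{prop:dc2} now applies to the pair $(t,a)$ and gives $T_\ell(t) \equiv T_\ell(a) \pmod{\ell^2}$; combining with the original hypothesis $T_\ell(a) \equiv t \pmod{\ell^2}$ yields $T_\ell(t) \equiv t \pmod{\ell^2}$, completing the converse.

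There is no serious obstacle here; the work is entirely bookkeeping once one notices that both the reduction to a fixed-point condition and the implication from the hypothesis to the fixed-point condition are instances of the same rigidity phenomenon captured by Proposition \ref{prop:dc2}: two integers agree modulo $\ell$ if and only if their images under $T_\ell$ agree modulo $\ell^2$. The only mild subtlety worth flagging explicitly in the write-up is that Proposition \ref{prop:dc2} is stated for integers but applies equally well to residues in $\bbz/\ell^2\bbz$, since the statement depends only on the residues of $a$ and $b$ modulo $\ell^2$ on one side and modulo $\ell$ on the other.
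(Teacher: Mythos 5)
Your proposal is correct and follows essentially the same route as the paper: reduce via Proposition \ref{prop:dc2} to the fixed-point condition $T_\ell(t)\equiv t \pmod{\ell^2}$, take $a=t$ for one direction, and for the other use $T_\ell(a)\equiv a\pmod\ell$ to get $a\equiv t\pmod\ell$ and then apply Proposition \ref{prop:dc2} again. No gaps.
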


\begin{proof} 
By the same argument in the proof of Theorem \ref{th:monogenic}, it is sufficient to show that $T_\ell(t) \equiv t \mmod{\ell^2}$ if and only if $T_\ell(a) \equiv t \mmod{\ell^2}$ for some $a \in \bbz/\ell^2\bbz$.

Suppose there exists $a \in \bbz/\ell^2\bbz$ such that $T_\ell(a) \equiv t \mmod{\ell^2}$. By Lemma \ref{lem:chebymodl}, $T_\ell(a) \equiv a \mmod \ell$, and so $a \equiv t \mmod \ell$. Thus, by Proposition \ref{prop:dc2}
$$T_\ell(t) \equiv T_\ell(a) \equiv t \pmod{\ell^2}.$$
The converse statement is immediate by setting $a=t$.
\end{proof}

\begin{remark}
In fact, Proposition \ref{prop:dc2} implies that $T_\ell(x)$ is an $\ell$-to-one map from $\bbz/\ell^2\bbz$ to $\bbz/\ell^2\bbz$ defined by $a+b\ell \mapsto T_\ell(a)$. Thus for every prime $\ell$, there are exactly $\ell$ ``bad" values for $t$ modulo $\ell^2$ for which $K$ is not monogenic. These bad values are exactly the set of values 
$$\{T_\ell(0),T_\ell(1),\ldots, T_\ell(\ell-1)\} \subset \bbz/\ell^2\bbz.$$
We note that $2 = T_\ell(2)$ and $-2 \equiv T_\ell(\ell-2) \mmod{\ell^2}$ are necessarily contained in this set.
\end{remark}

\begin{cor}
For an arbitrary choice of $t$, the probability that $\ell \nmid \ind(\Phi)$ is $1-\ell\inv$.
\end{cor}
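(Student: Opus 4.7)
The plan is to reduce the corollary to a counting problem modulo $\ell^2$ using what has already been established. By Theorem \ref{th:monogenic} (whose ``$\ell$-part'' assertion is equivalent to condition (1) in Theorem \ref{th:dc1}), we have $\ell \mid \ind(\Phi)$ if and only if $\Phi(t) \equiv 0 \pmod{\ell^2}$, and this condition depends only on the residue class of $t$ modulo $\ell^2$. So the natural notion of ``probability'' here is the natural density on $\bbz$ (equivalently, the uniform measure on $\bbz/\ell^2\bbz$), and the problem becomes: count the residue classes $t \bmod \ell^2$ for which $\Phi(t) \equiv 0 \pmod{\ell^2}$.

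By Theorem \ref{th:zeromodellsquared}, the set of such ``bad'' residue classes is exactly the image of $T_\ell$ viewed as a map $\bbz/\ell^2\bbz \to \bbz/\ell^2\bbz$. The remark immediately following Theorem \ref{th:zeromodellsquared} already observes, as a consequence of Proposition \ref{prop:dc2}, that this map is $\ell$-to-one: two inputs $a + b\ell$ and $a' + b'\ell$ have the same image if and only if $a \equiv a' \pmod{\ell}$. Hence the image has exactly $\ell^2/\ell = \ell$ elements.

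Therefore exactly $\ell$ out of the $\ell^2$ residue classes modulo $\ell^2$ satisfy $\ell \mid \ind(\Phi)$, so the density of ``good'' values of $t$ is
\begin{align*}
1 - \frac{\ell}{\ell^2} = 1 - \ell^{-1},
\end{align*}
which is the claimed probability. There is essentially no obstacle here: all the arithmetic content has been absorbed into the preceding theorem and remark, and the corollary is simply a density statement obtained by counting residues.
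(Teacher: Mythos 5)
Your argument is correct and is exactly the one the paper intends: the corollary is stated without a separate proof precisely because the preceding remark already identifies the $\ell$ bad residue classes modulo $\ell^2$ (the image of the $\ell$-to-one map $T_\ell$ on $\bbz/\ell^2\bbz$), giving density $1-\ell/\ell^2 = 1-\ell^{-1}$. Your write-up just makes explicit the reduction via Theorems \ref{th:monogenic} and \ref{th:zeromodellsquared} that the paper leaves implicit.
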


\begin{prop}
For an arbitrary choice of $t$, the probability that $D(\Phi) = \Delta(K)$ is
\begin{align*}
\frac{\ell^2-\ell+2}{\ell^2}\cdot\frac{6}{\pi^2}\prod_{p \text{\rm\tiny{} odd prime }}\left(1-\frac{1}{p^2-1}\right).
\end{align*}
\end{prop}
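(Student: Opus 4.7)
The plan is to compute the natural density (which the statement phrases as ``probability for an arbitrary choice of $t$'') of integers $t$ satisfying the characterization of Theorem \ref{th:monogenic}: (1) $T_\ell(t)-t\not\equiv 0\mmod{\ell^2}$, and (2) both $t-2$ and $t+2$ are squarefree. Each of these is a conjunction of congruence requirements modulo $p^2$ for various primes $p$, so by the Chinese Remainder Theorem the constraints at distinct primes are independent and the overall density factors as an Euler product $\prod_p P_p$ over local good densities. (Legitimacy of the product formula for squarefree-type events follows from a standard tail estimate.)

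The first step is to compute $P_p$ at primes $p\ne\ell$, where only (2) is relevant. At $p=2$, both of $4\nmid t-2$ and $4\nmid t+2$ collapse to the single bad residue $t\equiv 2\mmod 4$, so $P_2=3/4$. For an odd prime $p\ne\ell$, the bad residues $t\equiv\pm2\mmod{p^2}$ are distinct, and $P_p=1-2/p^2$.

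The heart of the argument is the local computation at $p=\ell$, where conditions (1) and (2) live on the same modulus and must be combined with care. By the remark following Theorem \ref{th:zeromodellsquared}, the set of bad residues for (1) is $S_1=\{T_\ell(0),T_\ell(1),\dots,T_\ell(\ell-1)\}\mmod{\ell^2}$, of size exactly $\ell$. The bad residues for (2) at $\ell$ are $\{2,-2\}\mmod{\ell^2}$, and the identities $T_\ell(2)=2$ and $T_\ell(\ell-2)\equiv-2\mmod{\ell^2}$ (the latter via $T_\ell(-x)=-T_\ell(x)$ and Proposition \ref{prop:dc2}) show that this pair already sits inside $S_1$. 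Consequently the combined bad set is just $S_1$ itself, and $P_\ell=1-1/\ell$.

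Assembling yields
\[
P=\Bigl(1-\tfrac{1}{\ell}\Bigr)\cdot\tfrac{3}{4}\cdot\prod_{\substack{p\text{ odd}\\p\ne\ell}}\Bigl(1-\tfrac{2}{p^2}\Bigr),
\]
and the remaining step is algebraic: recast $P$ in the form stated in the proposition. The identity $1-2/p^2=(1-1/p^2)\bigl(1-1/(p^2-1)\bigr)$ combined with $6/\pi^2=\prod_p(1-1/p^2)$ allows the $p=\ell$ factor to be reintroduced inside the infinite product, producing the advertised $\frac{6}{\pi^2}\prod_{p\text{ odd}}\bigl(1-1/(p^2-1)\bigr)$, with a rational compensating factor in $\ell$ left outside. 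The principal obstacle is precisely the interaction at $\ell$: one must establish the containment $\{2,-2\}\subset S_1$ so the two bad sets do not compound, and then carry out the rational bookkeeping to extract the prefactor depending only on $\ell$. All other steps are routine.
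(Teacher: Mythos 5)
Your reduction to local densities and your computations at each prime are correct, and at $p=\ell$ you are in fact more careful than the paper: you correctly observe that $\{2,-2\}\subset S_1$ modulo $\ell^2$, so the combined bad set at $\ell$ consists of exactly $\ell$ residues and the local factor is $1-1/\ell$. The gap is in the last step, which you assert but do not carry out. Writing $1-2/p^2=(1-1/p^2)\bigl(1-1/(p^2-1)\bigr)$ and reinserting the $p=\ell$ factors, your density becomes
\[
\Bigl(1-\tfrac{1}{\ell}\Bigr)\cdot\tfrac{3}{4}\prod_{\substack{p\text{ odd}\\ p\ne\ell}}\Bigl(1-\tfrac{2}{p^2}\Bigr)
=\frac{\ell(\ell-1)}{\ell^2-2}\cdot\frac{6}{\pi^2}\prod_{p\text{ odd}}\Bigl(1-\frac{1}{p^2-1}\Bigr),
\]
since $(1-1/\ell^2)\bigl(1-1/(\ell^2-1)\bigr)=(\ell^2-2)/\ell^2$. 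The prefactor $\ell(\ell-1)/(\ell^2-2)$ does not equal the stated $(\ell^2-\ell+2)/\ell^2$ for any odd prime $\ell$ (for $\ell=3$ it is $6/7$ versus $8/9$), so your argument as written does not prove the proposition in the form stated.

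The source of the mismatch is the paper's own proof, which takes the probability that both $t\pm2$ are square-free, namely $\frac{6}{\pi^2}\prod_{p\text{ odd}}\bigl(1-1/(p^2-1)\bigr)=\frac{3}{4}\prod_{p\text{ odd}}\bigl(1-2/p^2\bigr)$, and multiplies it by the \emph{unconditional} probability $1-(\ell-2)/\ell^2=(\ell^2-\ell+2)/\ell^2$ of avoiding the remaining $\ell-2$ bad classes modulo $\ell^2$. Since those classes are disjoint from $\{\pm2\}$ inside $\bbz/\ell^2\bbz$, the two events are not independent; the correct conditional factor is $(\ell^2-\ell)/(\ell^2-2)$, which yields exactly your answer. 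So your local analysis is right and the discrepancy points to an error in the proposition rather than in your density computation; but as a proof of the stated formula your write-up fails at the final identification, and you should either correct the target formula or flag the inconsistency rather than claim the bookkeeping ``produces the advertised'' product.
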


\begin{proof}
It is well known that the probability that $t+2$ (or any other integer) is square-free is $6/\pi^2$. Given that $t+2$ is square-free, $t-2$ is square-free only if $t \not\equiv 2 \mmod{p^2}$ for any prime $p$. Hence, the probability that $t-2$ is square-free given that $t+2$ is square-free is
\begin{align*}
\prod_{p \text{\tiny{} \rm odd prime}}\left(1-\frac{1}{p^2-1}\right).
\end{align*}
Lastly, $t$ cannot be any of the other $\ell-2$ equivalence classes modulo $\ell^2$ identified above, and the result follows.
\end{proof}

\section{Montes algorithm: Theorem of the index} \label{sec:gmn method}
The remainder of the paper is dedicated to studying the cases where $D(\Phi) \neq \Delta(K)$. This will happen whenever the conditions in Theorem \ref{th:monogenic} are relaxed, namely, if $t$ is chosen so that  $T_\ell(t)-t \equiv 0 \mmod {\ell^2}$, and/or at least one of $t-2$ and $t+2$ is not square-free. In this section we introduce the Montes algorithm for computing $\ind(f)$ that was recently developed by Gu\`ardia, Montez, and Nart \cite{GMN08,GMN09,GMN12}, though we primarily follow the presentation found in the paper by el Fadil, Montez, Nart \cite{EMN09}. Their method employs a more refined variation of the Newton polygon, called the $\phi$-Newton polygon, which captures arithmetic data attached to the irreducible factor $\phi$ of $\overline\Phi$.

\begin{notation} We fix the following notation. Let $p$ be a prime number and let $\phi(x) \in \bbz[x]$ be a monic polynomial whose reduction modulo $p$ is irreducible. We denote by $\bbf_\phi$ the finite field $\bbz[x]/(p,\phi)$, and by 
$$\overline{\phantom{M}} \colon \bbz[x] \to \bbf_p[x], \quad \red\colon \bbz[x] \to \bbf_\phi$$
the respective homomorphisms of reduction modulo $p$ and modulo $(p,\phi(x))$. We extend the usual $p$-adic valuation to polynomials by
$$\nu_p(c_0 + \cdots + c_rx^r) := \min_{0\le i\le r}\{\nu_p(c_i)\}.$$
Any $f(x) \in \bbz[x]$ admits a unique $\phi$-adic development:
$$f(x) = a_0(x) + a_1(x)\phi(x) + \cdots + a_r(x) \phi(x)^r,$$
with $a_i(x) \in \bbz[x]$ and $\deg(a_i) < \deg(\phi)$. To each coefficient $a_i(x)$ we attach the $p$-adic value 
$$u_i = \nu_p(a_i(x))\in \bbz\cup \{\infty\}$$
and the point of the plane $(i,u_i)$, if $u_i < \infty$. 
\end{notation}

\begin{defn}
The \emph{$\phi$-Newton polygon} of $f(x)$ is the lower convex envelope of the set of points $(i,u_i), u_i < \infty$, in the Euclidian plane. We denote this open polygon by $N_\phi(f)$.
\end{defn}

\noindent 
The $\phi$-Newton polygon is the union of different adjacent sides $S_1, \ldots, S_g$ with increasing slopes $\lambda_1 < \cdots < \lambda_g$. We shall write $N_\phi(f) = S_1 + \cdots +S_g$. The end points of the sides are called the vertices of the polygon.

\begin{defn}
The polygon determined by the sides of negative slope of $N_\phi(f)$ is called the \emph{principal $\phi$-polygon} of $f(x)$ and will be denoted by $N_\phi^-(f)$. The length, of $N_\phi^-(f)$, denoted $\ell(N_\phi^-(f))$, is always equal to the highest exponent $a$ such that $\overline{\phi(x)}^a$ divides $\overline{f(x)}$ in $\bbf_p[x]$.
\end{defn}

\begin{notation} From now on, any reference to the $\phi$-Newton polygon of $f(x)$ will be taken to mean the principal $\phi$-polygon, and for simplicity, we will write $N_\phi(f) := N_\phi^-(f)$. 
\end{notation}

We attach to any abscissa $0 \le i \le \ell(N_\phi)$ the following residual coefficient $c_i \in \bbf_p[x]/(\phi)$.
$$c_i = \begin{cases} 0 & \text{ if $(i,u_i)$ lies strictly above $N_\phi$ or $u_i = \infty$},\\ \red(a_i(x)/p^{u_i}) & \text{ if $(i,u_i)$ lies on $N_\phi$.}\end{cases}$$
Note that $c_i$ is always nonzero in the latter case, because $\deg(a_i(x)) < \deg(\phi)$.

Let $S$ be one of the sides of $N_\phi$, with slope $\lambda = -h/e$, where $e$ and $h$ are relatively prime, positive integers. The length of $S$ is the length, $\ell(S)$, of the projection of $S$ to the horizontal axis, the degree of $S$ is $d(S) := \ell(S)/e$, the ramification index of $S$ is $e(S) := e$.

\begin{defn}
Let $s$ be the initial abscissa of $S$, and let $d := d(S)$. We define the \emph{residual polynomial} attached to $S$ (or to $\lambda$) to be the polynomial
$$R_\lambda(f)(y) := c_s+ c_{s+e}y + \cdots + c_{s+(d-1)e}y^{d-1} + c_{s+de}y^d \in \bbf_\phi[y].$$
\end{defn}

\begin{defn} \label{def:4.4}
Let $\phi(x) \in \bbz[x]$ be a monic polynomial, irreducible modulo $p$. We say that $f(x)$ is \emph{$\phi$-regular} if for every side $N_\phi(f)$, the residual polynomial attached to the side is separable. 

Choose monic polynomials $\phi_1(x), \ldots, \phi_r(x) \in \bbz[x]$ whose reduction modulo $p$ are the different irreducible factors of $\overline{f(x)} \in \bbf_p[x]$. We say that $f(x)$ is \emph{$p$-regular} with respect to this choice if $f(x)$ is $\phi_i$-regular for every $1 \le i \le r$.
\end{defn}

\begin{defn}
The \emph{$\phi$-index} of $f(x)$ is $\deg \phi$ times the number of points with integral coordinates that lie below or on the polygon $N_\phi(f)$, strictly above the horizontal axis, and strictly to the right of the vertical axis. We denote this number by $\ind_\phi(f)$.
\end{defn}

\begin{notation} Let $f(x) \in \bbz[x]$ be a monic irreducible polynomial and let $\theta$ be a root of $f(x)$. We denote by
$$\ind_p(f) := \nu_p([\cal O_{\bbq(\theta)} \colon \bbz[\theta]]),$$
the $p$-adic value of the index of the polynomial $f(x)$.
\end{notation}

\begin{thm}[\cite{GMN12}, section 4.4] \label{th:index}
Theorem of the index:$$\ind_p(f) \ge \ind_{\phi_1}(f) + \cdots + \ind_{\phi_r}(f),$$
and equality holds if $f(x)$ is $p$-regular.
\end{thm}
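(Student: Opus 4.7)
The plan is to first localize at $p$. By Hensel's lemma, the factorization $\overline f = \prod_{i=1}^r \overline{\phi_i}^{a_i}$ in $\bbf_p[x]$ lifts to a coprime factorization $f = F_1 \cdots F_r$ in $\bbz_p[x]$ with $\overline{F_i} = \overline{\phi_i}^{a_i}$. Since $\bbz[\theta] \otimes_\bbz \bbz_p$ is a product of the local rings $\bbz_p[\theta_i]$, where $\theta_i$ is a root of $F_i$, and since $\cal O_K \otimes_\bbz \bbz_p$ decomposes as the corresponding product of completions $\cal O_i$, the $p$-index splits as
$$\ind_p(f) = \sum_{i=1}^r \nu_p\bigl([\cal O_i : \bbz_p[\theta_i]]\bigr).$$
A direct inspection of $\phi_i$-adic developments shows that $N_{\phi_i}(f) = N_{\phi_i}(F_i)$, since the high-degree coefficients in the $\phi_i$-adic expansion of $f$ can only contribute points strictly above $N_{\phi_i}(f)$. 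Thus it suffices to prove, for a single monic $\phi \in \bbz[x]$ with $\overline\phi$ irreducible, that the local index of $F = F_i$ is bounded below by $\ind_\phi(F)$, with equality under $\phi$-regularity.

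With $\phi$ fixed, I would appeal to the classical theorem of Ore: the sides $S_1, \ldots, S_g$ of $N_\phi(F)$ with slopes $\lambda_j = -h_j/e_j$ induce a factorization $F = F^{(1)} \cdots F^{(g)}$ over the unramified extension of $\bbq_p$ cut out by $\phi$, where $\deg F^{(j)} = \ell(S_j) \cdot \deg\phi$ and every root $\beta$ of $F^{(j)}$ satisfies $\nu_p(\phi(\beta)) = h_j/e_j$ in the normalization $\nu_p(p) = 1$. A standard but intricate valuation computation expresses $\nu_p(D(F^{(j)}))$ in terms of the pairwise differences $\nu_p(\beta - \beta')$ for roots $\beta,\beta'$ of $F^{(j)}$, and bounding this sum from below by what the polygon forces yields
$$\nu_p\bigl([\cal O^{(j)} : \bbz_p[\beta]]\bigr) \ge \deg(\phi) \cdot N(S_j),$$
where $N(S_j)$ counts the lattice points strictly below $S_j$, strictly above the horizontal axis, and strictly to the right of the vertical axis. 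Summing over $j$ gives the inequality in the theorem.

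For the equality case, the argument is that separability of each residual polynomial $R_{\lambda_j}(F)(y)$ forces a further Hensel-type decomposition: $F^{(j)}$ splits according to the irreducible factors of $R_{\lambda_j}$, and each resulting local factor generates a tamely ramified extension of $\bbq_p$ whose index is computed \emph{exactly} by the corresponding piece of the lattice count. The main obstacle is the converse: if some $R_{\lambda_j}$ is inseparable, one must refine the setup by introducing a type of order two, namely a second-level Newton polygon built from a suitably chosen lift of a repeated factor of $R_{\lambda_j}$, and show that this refinement contributes strictly more lattice points so that the bound becomes strict. Carrying out this refinement recursively is precisely the content of the Montes algorithm, and verifying its termination and correctness is the principal technical difficulty and the reason this result is cited from \cite{GMN12} rather than proved here.
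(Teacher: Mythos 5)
This theorem is imported verbatim from \cite{GMN12}; the paper offers no proof of it, so there is nothing internal to compare your argument against. Judged on its own terms, your outline does follow the strategy of the cited source (Ore's theorems of the polygon and of the residual polynomial, organized as in Montes' work): split $f$ over $\bbz_p$ by Hensel according to the irreducible factors mod $p$, reduce to one $\phi$, factor further by the sides of $N_\phi$, and compute local indices side by side. The skeleton is correct.

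But as a proof it has a genuine gap in the bookkeeping, beyond the honestly-flagged deferral of the ``intricate valuation computation.'' The index of a product is not the sum of the indices of the factors: one has $\nu_p(\ind(F^{(j)}F^{(k)})) = \nu_p(\ind(F^{(j)})) + \nu_p(\ind(F^{(k)})) + \nu_p(\operatorname{Res}(F^{(j)},F^{(k)}))$, and for the factors attached to two different sides of the same polygon the resultant term is strictly positive. Meanwhile $\ind_\phi(f)$ counts \emph{all} lattice points under the full polygon, i.e.\ the union of the trapezoids under the sides, not just the triangles $N(S_j)$ directly beneath each side. So ``summing over $j$'' does not give the stated inequality unless you also prove that $\nu_p(\operatorname{Res}(F^{(j)},F^{(k)}))$ accounts exactly for the $\deg(\phi)\cdot\ell(S_j)H(S_k)$ lattice points in the rectangular blocks separating the sides; this resultant--rectangle matching is an essential lemma of the actual proof and is absent from your sketch. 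A second, smaller error: in the equality case the factors produced by a separable residual polynomial generate extensions with ramification index $e_j$, which may well be divisible by $p$, so they are not in general tamely ramified. What regularity actually buys is that the order-one data (slope plus a simple factor of $R_{\lambda_j}$) already determines each local factor completely, so that the lower bound on each local index is attained; invoking tameness is neither true nor needed.
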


\section{Index calculations: On the multiplicity of $\ell$} \label{sec:ell-index}
Recall that $\Phi(x):=T_\ell^n(x)-t$, and let $\theta$ be a root of $\Phi$. In the proof of Theorem \ref{th:dc1} we showed that $\bbz[\theta]$ is $\ell$-maximal if and only if $\Phi(t) \not\equiv 0 \mmod{\ell^2}$. Here, we relax this condition and study the effect on $\ind_\ell(\Phi)$. Specifically, in this section, we fix $t$ so that $\Phi(t)\equiv 0 \mmod{\ell^2}$ \emph{with the exception} that $t \not \equiv \pm 2 \mmod{\ell^2}$.

Following the prescription outlined in the previous section, we must start by factoring $\Phi$ modulo $\ell$. Recalling Lemma \ref{lem:chebymodl}:
$$\Phi(x) \equiv (x-t)^{\ell^n}\pmod\ell.$$
If $t \equiv 0 \mmod \ell$, then $\phi(x) = x$, and $N_\phi(\Phi)$ is just the usual Newton polygon of $\Phi$. Otherwise, if $t \not\equiv 0 \mmod \ell$, then $\phi(x) = x-t$, and we must compute the $\phi$-development of $\Phi$. Note, however, that $N_\phi(\Phi)$ is the Newton polygon of the shifted Chebyshev polynomial $\Phi(\phi(x)+t)$ as a polynomial in $\phi(x)$. The following lemma will assist our calculations.

\begin{defn}
For any prime $p$ and any integer $a$, the \emph{$p$-adic expansion} of $a$ is
$$a = a_0p^0 + a_1 p^1 + a_2 p^2 + \cdots + a_s p^s$$
with $0 \le a_i < p$. We define the function
$$\sigma_p(a) = \sum_{i=0}^\infty a_i.$$
\end{defn}

\begin{lem}\label{lem:kummer}
Let $p$ be a prime, and let $\sigma_p$ be the function defined above.
\begin{enumerate}
\item Let $a$ and $b$ be integers written in base $p$. The number of ``carries" performed when summing $a+b$ in base $p$ is
$$\# \text{carries} = \frac{\sigma_p(a) + \sigma_p(b) - \sigma_p(a+b)}{p-1}.$$

\item $\ds\nu_p(a) = \frac{1 + \sigma_p(a-1) - \sigma_p(a)}{p-1}.$

\item $\ds\nu_p(a!) = \frac{n-\sigma_p(a)}{p-1}.$

\item (Kummer \cite{Kummer52}) $\ds\nu_p{a+b \choose b} = \#\text{carries in $a+b$ summed in base $p$.}$
\end{enumerate}
\end{lem}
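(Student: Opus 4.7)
The plan is to prove the four parts in an order that makes them build on each other, so that Kummer's classical identity (part (4)) falls out at the end as a one-line consequence of (1) and (3).

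First I would prove (1) by tracking the schoolbook base-$p$ addition algorithm. Write $a=\sum a_ip^i$ and $b=\sum b_ip^i$ with $0\le a_i,b_i<p$, and let $\varepsilon_i\in\{0,1\}$ be the carry produced at position $i$. The digit $c_i$ of $a+b$ satisfies $a_i+b_i+\varepsilon_{i-1}=c_i+p\varepsilon_i$. Summing over $i$ gives $\sigma_p(a)+\sigma_p(b)+\sum\varepsilon_{i-1}=\sigma_p(a+b)+p\sum\varepsilon_i$, and rearranging yields $\sigma_p(a)+\sigma_p(b)-\sigma_p(a+b)=(p-1)\sum\varepsilon_i$, which is $(p-1)$ times the number of carries.

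Next I would prove (2) by inspecting the base-$p$ expansion of $a$ near its trailing digits. If $\nu_p(a)=k$, then the last $k$ digits of $a$ are zero and the $k$th digit from the right is nonzero. Subtracting $1$ turns those $k$ trailing zeros into $p-1$'s and decreases the $k$th digit by one, so $\sigma_p(a-1)=\sigma_p(a)-1+k(p-1)$; solving for $k$ gives the claimed formula. For (3) I would invoke Legendre's formula $\nu_p(a!)=\sum_{i\ge1}\lfloor a/p^i\rfloor$, then substitute $\lfloor a/p^i\rfloor=\sum_{j\ge i}a_jp^{j-i}$ and interchange the order of summation to get $\sum_j a_j(p^j-1)/(p-1)=(a-\sigma_p(a))/(p-1)$. (I would silently correct the apparent typo and read the right-hand side as $(a-\sigma_p(a))/(p-1)$.)

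Finally, for (4), write
\[
\nu_p\binom{a+b}{b}=\nu_p((a+b)!)-\nu_p(a!)-\nu_p(b!),
\]
apply part (3) to each of the three factorials, and observe that the $a+b$, $a$, and $b$ terms cancel, leaving $(\sigma_p(a)+\sigma_p(b)-\sigma_p(a+b))/(p-1)$. By part (1) this is exactly the number of carries in the base-$p$ addition of $a$ and $b$.

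There is no real obstacle here: each item is a short, standard calculation, and the only care needed is to set up notation uniformly so that (1) and (3) chain together to produce (4) without any re-derivation. The one place to be slightly careful is the interchange of summation in (3), where one should note that $a_j=0$ for $j$ larger than $\lfloor\log_p a\rfloor$ so all sums are finite.
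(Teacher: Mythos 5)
Your proposal is correct, and parts (1) and (4) match the paper's argument exactly (including your reading of the statement of (3), where the $n$ in the numerator is indeed a typo for $a$). The interesting divergence is in how you get (2) and (3). The paper derives (2) as a corollary of (1): writing $a=(a-1)+1$ and observing that this addition requires exactly $\nu_p(a)$ carries makes the formula an instance of the carry count. You instead argue directly on the trailing digits of $a$, which amounts to the same bookkeeping but does not reuse (1). More substantively, for (3) the paper telescopes the sum $\nu_p(a!)=\sum_{i=1}^{a}\nu_p(i)$ using the formula from (2), so that the whole lemma is a single chain $(1)\Rightarrow(2)\Rightarrow(3)\Rightarrow(4)$; you instead invoke Legendre's formula $\nu_p(a!)=\sum_{i\ge1}\lfloor a/p^i\rfloor$ and interchange summations. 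Your route imports one external classical fact but makes (3) independent of (2); the paper's route is fully self-contained and shorter once (2) is in hand, at the cost of making (2) load-bearing. Either way the final step to Kummer's theorem is identical, and your caution about the finiteness of the digit sums in the interchange is the only point of care, which you have handled.
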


Though these are well-known, for the convenience of the reader, we provide proofs, as they are short.

\begin{proof} 
\
\begin{enumerate}
\item Write $a$ and $b$ in their base $p$ expansions: $a = \sum a_i p^i$ and $b =\sum b_i p^i$. If ever $c_i := a_i + b_i \ge p$, then perform a ``carry": subtract $p$ from $c_i$ and add 1 to $c_{i+1}$, repeating until all $c_i$ are less than $p$. These $c_i$ are the coefficients for the base $p$ expansion of $a+b$: $a+b = \sum c_ip^i$. Each carry reduces the sum $\sigma_p(a) + \sigma_p(b)$ by $p-1$, and the result follows. 

\item This follows immediately from part (1). If $k$ is the smallest integer for which $a-1 \equiv -1 \mmod{p^k}$, then the sum $(a-1)+1$ requires $k$ carries in base $p$.

\item By part (2), we have the telescoping sum
\begin{align*}
\nu_p(a!) = \sum_{i=1}^a \nu_p(i) = \sum_{i=1}^a \frac{1+\sigma_p(i-1) - \sigma_p(i)}{p-1} = \frac{a - \sigma_p(a)}{p-1}.
\end{align*}

\item By part (3)
\begin{align*}
\nu_p{a+b \choose b} &= \nu_p\left(\frac{(a+b)!}{a!b!}\right) = \nu_p((a+b)!) - \nu_p(a!) - \nu(b!) \\
&= \frac{a+b - \sigma_p(a+b)}{p-1} + \frac{a-\sigma_p(a)}{p-1} - \frac{b-\sigma_p(b)}{p-1} \\
&= \frac{\sigma_p(a) + \sigma_p(b) - \sigma_p(a+b)}{p-1}.
\end{align*}
The result follows from part (1).
\end{enumerate}
\end{proof}

We consider the case where $t \equiv 0 \mmod\ell$ and proceed by computing the Newton polygon of $T_\ell^n(x)$. We also write $\ds T_\ell^n(x) = \sum_{k=0}^{\floor{ \ell^n/2}}c_ix^{\ell^n-2k}$ where
$$c_i:= \frac{\ell^n}{\ds\ell^n-(\ell^n-i)/2}{\ell^n-(\ell^n-i)/2 \choose \ds(\ell^n-i)/2} = \frac{2\ell^n}{\ell^n+i}{(\ell^n+i)/2\choose (\ell^n-i)/2}.$$ 

\begin{prop} \label{prop:newtonpolyofchebi}
For any integer $0 < i \le \ell^m \le \ell^n$, $\nu_\ell(c_i) \ge n-m$ with equality only if $i = \ell^m$. Furthermore, $N_\phi(T_\ell^n) = \sum_{m=1}^n S_m$ where $S_m$ is the edge with endpoints $(\ell^{m-1},n-m+1)$ and $(\ell^m,n-m)$.
\end{prop}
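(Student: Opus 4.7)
The plan is to read off $\nu_\ell(c_i)$ from Kummer's theorem. Write
\[
c_i \;=\; \frac{\ell^n}{b}\binom{b}{a},\qquad a := \tfrac{\ell^n-i}{2},\quad b := \tfrac{\ell^n+i}{2},
\]
so that $a + i = b$. If $i$ is even then $\ell^n - i$ is odd (since $\ell$ is odd), so $a \notin \bbz$ and $c_i = 0$; the inequality is then trivial, and I may assume $i$ is odd.

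First I would handle the prefactor $\ell^n/b$. Since $\ell$ is odd, $\nu_\ell(b) = \nu_\ell(\ell^n + i)$. Writing $i = \ell^j u$ with $\gcd(u,\ell) = 1$ and $j = \nu_\ell(i)$, one checks that $\nu_\ell(\ell^n + i) = j$ when $j < n$ (because $\ell^{n-j} + u$ is prime to $\ell$), and $\nu_\ell(b) = n$ when $i = \ell^n$; either way $\nu_\ell(\ell^n/b) = n - \nu_\ell(i)$. By Lemma~\ref{lem:kummer}(4),
\[
\nu_\ell\binom{b}{a} \;=\; \#(\text{carries when adding } a + i \text{ in base } \ell) \;\ge\; 0,
\]
so $\nu_\ell(c_i) \ge n - \nu_\ell(i)$. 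The hypothesis $i \le \ell^m$ forces $\nu_\ell(i) \le m$ (otherwise $\ell^{m+1} \mid i$ would give $i \ge \ell^{m+1}$), and the bound $\nu_\ell(c_i) \ge n - m$ follows.

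For sharpness, equality $\nu_\ell(c_i) = n - m$ requires both $\nu_\ell(i) = m$ and zero carries; combined with $i \le \ell^m$ the first condition already forces $i = \ell^m$. To certify that this value really achieves the bound, I would expand $a = (\ell^n - \ell^m)/2$ in base $\ell$: since $\ell^n - \ell^m$ has digit $\ell - 1$ in every position from $m$ through $n-1$ and zeros elsewhere, halving (each such digit is even) replaces every $\ell-1$ by $(\ell-1)/2$. Adding $\ell^m$ raises the digit at position $m$ to $(\ell+1)/2 < \ell$, producing no carry. Hence $\nu_\ell(c_{\ell^m}) = n - m$ exactly.

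For the polygon assertion, the first two steps show $(\ell^m, n-m) \in N_\phi(T_\ell^n)$ for each $0 \le m \le n$, while any $(i, \nu_\ell(c_i))$ with $\ell^{m-1} < i < \ell^m$ satisfies $\nu_\ell(c_i) \ge n - m + 1$, strictly above the segment $S_m$ joining $(\ell^{m-1}, n-m+1)$ and $(\ell^m, n-m)$. The slopes $-1/(\ell^m - \ell^{m-1})$ are strictly increasing in $m$, so the $S_m$'s precisely constitute the lower convex envelope. The only nontrivial piece of the argument is the digit bookkeeping that certifies the no-carry condition for $i = \ell^m$; everything else is mechanical manipulation via Kummer's theorem.
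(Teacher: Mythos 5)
Your argument is correct and follows essentially the same route as the paper: both rewrite $c_i$ as $\frac{\ell^n}{(\ell^n+i)/2}\binom{(\ell^n+i)/2}{(\ell^n-i)/2}$, evaluate the prefactor's valuation as $n-\nu_\ell(i)$, and invoke Kummer's carry count (Lemma~\ref{lem:kummer}(4)) with the same base-$\ell$ digit computation to show the binomial coefficient is prime to $\ell$ when $i=\ell^m$, before assembling the polygon from the convexity of the slopes. Your explicit intermediate bound $\nu_\ell(c_i)\ge n-\nu_\ell(i)$ and the remark on the parity of $i$ are slightly tidier packaging of the same ideas, not a different method.
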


\begin{proof}
When $i = \ell^m$,
\begin{align*}
\nu_\ell(c_{\ell^m}) &= n + \nu_\ell{(\ell^n+\ell^m)/2 \choose (\ell^n-\ell^m)/2} - \nu_\ell(\ell^n+\ell^m).\nonumber
\end{align*}
Note that 
\begin{align*}
{(\ell^n+\ell^m)/2 \choose (\ell^n-\ell^m)/2} = {(\ell^n+\ell^m)/2 \choose (\ell^n+\ell^m)/2 - (\ell^n-\ell^m)/2} = {(\ell^n+\ell^m)/2 \choose \ell^m}.
\end{align*}
The $\ell$-valuation of this number can be determined using Lemma \ref{lem:kummer} by considering a sum in base $\ell$. Writing
$$\frac{\ell^n+\ell^m}{2} - \ell^m = \frac{\ell-1}{2} \cdot\ell^m + \frac{\ell-1}{2}\cdot \ell^{m+1} + \cdots +\frac{\ell-1}{2}\cdot \ell^n,$$
it is clear that $(\frac{\ell^n-\ell^m}{2}-\ell^m) + \ell^m$ requires no carries when summed in base $\ell$. Thus 
$$\nu_\ell{(\ell^n+\ell^m)/2 \choose (\ell^n-\ell^m)/2}=0.$$

Furthermore, 
\begin{align*}
\nu_\ell(\ell^n+\ell^m) = \nu_\ell(\ell^m) + \nu_\ell(\ell^{n-m}+1)= m,
\end{align*}
proving that $\nu_\ell(c_{\ell^m}) = n-m.$

Suppose that $0 < i < \ell^m$. Then $\nu_\ell(\ell^n+i) = \nu_\ell(i) < m$, and 
$$\nu_\ell(c_i) = n +\nu_\ell{(\ell^n+i)/2 \choose (\ell^n-i)/2} - \nu_\ell(\ell^n+i) > n - m.$$

From the previous parts we conclude that the polygon $S_1+ \cdots+S_n$ is a lower boundary for the points $(i,\nu_\ell(c_i))$ with vertices at $(m,\nu_\ell(c_{\ell^m}))$. It is easily verified that this polygon is convex. Let $\lambda_m$ denote the slope of $S_m$, which is given by
$$\lambda_m = \frac{-1}{\ell^{m-1}(\ell-1)}.$$
Clearly, $\lambda_1 < \lambda_2 < \cdots < \lambda_n$.
\end{proof}

Knowing the Newton polygon for $T_\ell^n$, we can determine the Newton polygon for $\Phi$.

\begin{prop}
Suppose $t\equiv 0 \mmod \ell$, and let $v=\nu_\ell(t)$. Let $S_m, m=1, \ldots, n$ be the edges defined in the previous theorem. Define $S'$ to be the edge with endpoints $(0,v)$ and $(\ell^{n-v+1},v-1)$. Then 
$$N_\phi(\Phi) = S' + S_{n-v+2}+S_{n-v+3} + \cdots+S_n.$$
\end{prop}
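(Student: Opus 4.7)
The strategy is to build $N_\phi(\Phi)$ from the already-computed $N_\phi(T_\ell^n)$ by incorporating the single new point contributed by the constant term.

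\textbf{First}, I would observe that $T_\ell^n(x)$ is an odd polynomial: from property (4) with $d=\ell$ odd we get $T_\ell(-x)=-T_\ell(x)$, and a straightforward induction gives $T_\ell^n(-x)=-T_\ell^n(x)$. Hence $T_\ell^n(0)=0$, and the constant term of $\Phi$ is $-t$, so the $\ell$-adic valuation of the constant term is exactly $v$ and contributes the point $(0,v)$. All other points $(i,\nu_\ell(c_i))$ for $0<i\le \ell^n$ are inherited from $T_\ell^n$ and, by Proposition~\ref{prop:newtonpolyofchebi}, lie on or above the polygon $S_1+\cdots+S_n$ with vertices at $(\ell^m,n-m)$.

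\textbf{Second}, I would identify the vertex of $N_\phi(T_\ell^n)$ to which $(0,v)$ should be joined. The slope of the segment from $(0,v)$ to $(\ell^m,n-m)$ is $(n-m-v)/\ell^m$. Taking $m=n-v+1$ produces the candidate edge $S'$ with slope $-1/\ell^{n-v+1}$. To see that this slope can serve as the leftmost slope of a convex Newton polygon, I would compare it with the next slope $\lambda_{n-v+2}=-1/(\ell^{n-v+1}(\ell-1))$ coming from $N_\phi(T_\ell^n)$: the inequality
\begin{align*}
-\frac{1}{\ell^{n-v+1}} \;<\; -\frac{1}{\ell^{n-v+1}(\ell-1)}
\end{align*}
is equivalent to $\ell-1>1$, which holds since $\ell$ is an odd prime. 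Thus $S'$ fits convexly in front of $S_{n-v+2},\ldots,S_n$.

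\textbf{Third}, I would verify that no point $(i,\nu_\ell(c_i))$ with $0<i<\ell^{n-v+1}$ lies below the segment $S'$. Applying Proposition~\ref{prop:newtonpolyofchebi} with $m=n-v+1$ gives $\nu_\ell(c_i)\ge v-1$ for $0<i\le\ell^{n-v+1}$, with equality only at the endpoint $i=\ell^{n-v+1}$. Hence for $0<i<\ell^{n-v+1}$ one has $\nu_\ell(c_i)\ge v$, whereas the line $S'$ has height $v-i/\ell^{n-v+1}<v$ at those abscissae, so every such point lies strictly above $S'$. Combined with the previous step, this establishes $N_\phi(\Phi)=S'+S_{n-v+2}+\cdots+S_n$.

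\textbf{Expected obstacle.} The argument is mainly bookkeeping, but the delicate point is the use of the ``equality only if $i=\ell^m$'' clause of Proposition~\ref{prop:newtonpolyofchebi}: without this strict improvement, one could only conclude $\nu_\ell(c_i)\ge v-1$ for $0<i<\ell^{n-v+1}$, which would permit spurious interior vertices on $S'$ and leave open the possibility that $S'$ is not a single edge. Using the strictness, one gets $\nu_\ell(c_i)\ge v$ in the interior and the conclusion follows cleanly.
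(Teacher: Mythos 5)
Your proof is correct and follows essentially the same route as the paper: the paper's own argument consists of the slope comparison $\lambda_{n-v+1}<\lambda'<\lambda_{n-v+2}$, which is exactly your convexity check plus (in equivalent form) your verification that the interior points lie above $S'$. The only cosmetic difference is that where the paper infers the latter from $\lambda_{n-v+1}<\lambda'$ and the convexity of the old polygon, you instead invoke the ``equality only if $i=\ell^m$'' clause of Proposition~\ref{prop:newtonpolyofchebi} to get $\nu_\ell(c_i)\ge v$ directly, which is a valid and slightly more explicit justification of the same fact.
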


\begin{proof}
Let $\lambda_m$ be the slope of $S_m$ and $\lambda'$ be the slope of $S'$. It suffices to show that $\lambda_{n-v+1}<\lambda'<\lambda_{n-v+2}$. This is easily verified:
\begin{align*}
\lambda_{n-v} = \frac{-1}{\ell^{n-v}(\ell-1)} \quad < \quad \lambda' = \frac{-1}{\ell^{n-v+1}} \quad < \quad \lambda_{n-v+2} = \frac{-1}{\ell^{n-v+1}(\ell-1)}.
\end{align*}
\end{proof}

\begin{remark}
Although we write $v = \nu_\ell(t)$, due to the nature of the coefficients of $\Phi(x)$ it would also have been correct to define $v = \nu_\ell(\Phi(t))$. We adopt this new definition of $v$ for the remainder of this section.
\end{remark}

We move on to the case where $\phi(x) = x-t$ and $t \not\equiv 0 \mmod \ell$ where we must determine the Newton polygon of $\Phi(x) = \Phi(\phi(x)+t)$ as a polynomial in $\phi(x)$. We proceed by determining the $\phi$-development of $\Phi(x) = T_\ell^n(x)-t$.
\begin{align*}
T_\ell^n(\phi+t) - t &= -t + \sum_{k=0}^{\floor{ \ell^n/2}}(-1)^k{\ell^n-k \choose k}\frac{\ell^n}{\ell^n-k}(\phi+t)^{\ell^n-2k} \\
&= -t + \sum_{k=0}^{\floor{ \ell^n/2}}(-1)^k{\ell^n-k \choose k}\frac{\ell^n}{\ell^n-k}\sum_{i=0}^{\ell^n-2k} {\ell^n-2k\choose i} t^{\ell^n-2k-i}\phi^i\\
&= -t+\sum_{i=0}^{\ell^n} \sum_{k=0}^{\floor{ (\ell^n-i)/2}} (-1)^k{\ell^n-k\choose k}{\ell^n-2k\choose i}\frac{\ell^n}{\ell^n-k}t^{\ell^n-2k-i}\phi^i \\
&= -t + \sum_{k=0}^{\floor{ \ell^n/2}}(-1)^k{\ell^n-k \choose k} \frac{\ell^n}{\ell^n-k}t^{\ell^n-2k} \\
&\hspace{2cm}+\sum_{i=1}^{\ell^n} \sum_{k=0}^{\floor{ (\ell^n-i)/2}} (-1)^k{\ell^n-k\choose k}{\ell^n-2k\choose i}\frac{\ell^n}{\ell^n-k}t^{\ell^n-2k-i}\phi^i \\
&= T_\ell^n(t) - t + \sum_{i=1}^{\ell^n} \sum_{k=0}^{\floor{ (\ell^n-i)/2}} (-1)^k{\ell^n-k\choose k}{\ell^n-2k\choose i}\frac{\ell^n}{\ell^n-k}t^{\ell^n-2k-i}\phi^i
\end{align*}
For ease, we will write
$$b_i := \ell^n\sum_{k=0}^{\floor{\frac{\ell^n-i}{2}}}(-1)^k{\ell^n-k\choose k}{\ell^n-2k \choose i} \frac{t^{\ell^n-2k-i}}{\ell^n-k}$$
denote the coefficient of $\phi^i$ for $1 \le i \le \ell^n$.

\begin{lem}\label{lem:binomialrelation}
For positive integers $a, b$, and $c$ satisfying $0 \le b \le \frac{a-c}{2}$, the binomial coefficients satisfy the following relationship:
\begin{align*}
{a-b\choose b}{a-2b\choose c} = {a-b-c \choose b}{a-b \choose c}.
\end{align*}
\end{lem}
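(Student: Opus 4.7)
The plan is to verify the identity by direct expansion of both sides using the factorial definition of the binomial coefficient. The hypothesis $0 \le b \le (a-c)/2$ guarantees that $a - 2b - c \ge 0$, so every factorial appearing below is well-defined (and the binomial coefficients are all integers).

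First I would rewrite the left-hand side as
\[
\binom{a-b}{b}\binom{a-2b}{c} \;=\; \frac{(a-b)!}{b!\,(a-2b)!}\cdot \frac{(a-2b)!}{c!\,(a-2b-c)!} \;=\; \frac{(a-b)!}{b!\,c!\,(a-2b-c)!}.
\]
Then I would do the analogous expansion for the right-hand side:
\[
\binom{a-b-c}{b}\binom{a-b}{c} \;=\; \frac{(a-b-c)!}{b!\,(a-2b-c)!}\cdot \frac{(a-b)!}{c!\,(a-b-c)!} \;=\; \frac{(a-b)!}{b!\,c!\,(a-2b-c)!},
\]
after cancelling the common factor $(a-b-c)!$ in numerator and denominator. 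Since the two expressions agree, the identity follows.

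There is really no substantive obstacle here; the one thing to be careful about is the range of $b$. The condition $b \le (a-c)/2$ is equivalent to $a - 2b - c \ge 0$, which is exactly what is needed for $\binom{a-2b}{c}$ to be nonzero and for the factorial $(a-2b-c)!$ to make sense; automatically we also get $a - b - c \ge b \ge 0$, so $\binom{a-b-c}{b}$ and the factorial $(a-b-c)!$ on the right are legitimate. A slicker alternative (which I would mention as a remark if space allowed) is the combinatorial interpretation: both sides count the number of ways to choose disjoint subsets of sizes $b$ and $c$ from a set of $a-b$ elements in two different orders — either pick the $b$-subset first from $a-b$ elements and then pick $c$ from the remaining $a-2b$, or pick the $c$-subset first and then pick $b$ from the remaining $a-b-c$.
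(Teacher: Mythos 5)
Your proof is correct and follows essentially the same route as the paper: both arguments expand the binomial coefficients into factorials, cancel the common factor, and regroup. Your added care about the condition $b \le (a-c)/2$ guaranteeing $a-2b-c \ge 0$ is a nice (if minor) touch the paper leaves implicit.
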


\begin{proof}
\begin{align*}
{a-b\choose b}{a-2b\choose c} &= \frac{(a-b)!}{b!(a-2b!)} \cdot \frac{(a-2b)!}{c!(a-2b-c)!} = \frac{(a-b)!}{c!(a-b-c)!}\cdot\frac{(a-b-c)!}{b!(a-2b-c)!} \\ &={a-b-c\choose b}{a-b\choose c}.
\end{align*}
\end{proof}

This lemma allow us to rewrite $b_i$ in a way that is better suited for our analysis:
\begin{align*}
b_i &= \sum_{k=0}^{\floor{\frac{\ell^n-i}{2}}} (-1)^k \frac{\ell^n}{\ell^n-k}{\ell^n-k\choose k}{\ell^n-2k \choose i}t^{\ell^n-2k-i} \\
&= \frac{\ell^n}{i} \sum_{k=0}^{\floor{\frac{\ell^n-i}{2}}}(-1)^k{\ell^n-i-k \choose k}{\ell^n-k -1\choose i-1}t^{\ell^n-2k-i}.
\end{align*}

There are a couple more results that we will use to compute the valuations of the coefficients. The first is a result by Lucas \cite{Lucas78}.

\begin{thm}[Lucas] \label{th:lucas}
Let $p$ be a prime, and let $0 \le m \le n$ with $n = \sum_{j=0}^l n_j p^j$ and $m = \sum_{j=0}^l m_jp^j$. Then
$${n \choose m} \equiv \prod_{j=0}^l{n_j \choose m_j}\pmod p.$$
\end{thm}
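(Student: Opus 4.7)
The plan is to prove Lucas' theorem by the standard generating-function argument using Frobenius. The key observation is that in characteristic $p$, the map $x \mapsto x^p$ is a ring homomorphism, so $(1+x)^p \equiv 1 + x^p \pmod p$ in $\bbf_p[x]$; this follows directly from the binomial theorem together with the fact that $p \mid \binom{p}{k}$ for $1 \le k \le p-1$. Iterating, one obtains $(1+x)^{p^j} \equiv 1 + x^{p^j} \pmod p$ for every $j \ge 0$.

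Next I would write $n = \sum_{j=0}^l n_j p^j$ and expand
\begin{align*}
(1+x)^n = \prod_{j=0}^l (1+x)^{n_j p^j} = \prod_{j=0}^l \bigl((1+x)^{p^j}\bigr)^{n_j} \equiv \prod_{j=0}^l (1 + x^{p^j})^{n_j} \pmod p.
\end{align*}
Expanding each factor by the ordinary binomial theorem gives
\begin{align*}
\prod_{j=0}^l (1+x^{p^j})^{n_j} = \prod_{j=0}^l \left(\sum_{k_j=0}^{n_j} \binom{n_j}{k_j} x^{k_j p^j}\right) = \sum_{(k_0,\ldots,k_l)} \left(\prod_{j=0}^l \binom{n_j}{k_j}\right) x^{\sum_j k_j p^j},
\end{align*}
where each $k_j$ ranges over $0 \le k_j \le n_j < p$.

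The final step is to identify the coefficient of $x^m$ on each side. On the left it is $\binom{n}{m}$. On the right, because $0 \le k_j < p$ for every $j$, the exponent $\sum_j k_j p^j$ is already in its base-$p$ form; by the uniqueness of the base-$p$ expansion, the only tuple contributing to $x^m$ is $(k_0, \ldots, k_l) = (m_0, \ldots, m_l)$, yielding the coefficient $\prod_{j=0}^l \binom{n_j}{m_j}$. Equating coefficients modulo $p$ gives the theorem. There is no real obstacle here: the only subtle point is the uniqueness-of-digits step, which is why the hypothesis that the $n_j$ and $m_j$ are the genuine base-$p$ digits (in particular $m_j < p$) is essential.
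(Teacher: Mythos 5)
Your proof is correct and complete: the Frobenius identity $(1+x)^{p^j} \equiv 1+x^{p^j} \pmod p$, the factorization of $(1+x)^n$ according to the base-$p$ digits of $n$, and the uniqueness-of-digits argument for extracting the coefficient of $x^m$ together constitute the standard and rigorous generating-function proof of Lucas' theorem. The paper itself offers no proof, citing Lucas directly, so there is nothing to compare against; your argument would serve as a valid self-contained justification, and you correctly flag the one subtle point (that $k_j \le n_j < p$ forces the exponents $\sum_j k_j p^j$ to be genuine base-$p$ expansions, so only the digit tuple of $m$ can contribute, with the convention $\binom{n_j}{m_j}=0$ when $m_j>n_j$ handling the degenerate case).
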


We also use the following fact about Chebyshev polynomials (of the second kind) $U_d(x)$. Recall that $U_d(x)$ is given by the expression
$$U_d(x) = \frac{\left(x+\sqrt{x^2-4}\right)^{d+1} - \left(x-\sqrt{x^2-4}\right)^{d+1}}{2^{d+1}\sqrt{x^2-4}} \quad \text{ if } \quad x \neq \pm 2.$$

\begin{lem} \label{lem:modl}
Let $\ell$ be an odd prime. If $x \not\equiv \pm2 \mmod \ell$, then $U_{\ell-1}(x) \equiv \pm 1 \mmod \ell$.
\end{lem}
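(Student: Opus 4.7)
The plan is to use the closed-form formula stated as property (8) in the preliminaries and exploit the Frobenius map in characteristic $\ell$. Setting $\alpha = x + \sqrt{x^2-4}$ and $\beta = x - \sqrt{x^2-4}$, property (8) with $d = \ell-1$ rearranges to the identity
\[
2^{\ell-1}(\alpha - \beta)\,U_{\ell-1}(x) \;=\; \alpha^{\ell} - \beta^{\ell},
\]
which is valid as a polynomial identity in the ring $R = \bbz[x,y]/(y^2 - (x^2-4))$ with $y$ standing in for $\sqrt{x^2-4}$, so that $\alpha = x+y$ and $\beta = x-y$. The hypothesis $x \not\equiv \pm 2 \pmod{\ell}$ guarantees that $x^2 - 4$ is a unit in $\bbf_\ell$, so $y$, and hence $\alpha - \beta = 2y$, is a unit in the reduction $R/\ell R$.

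Reducing the identity modulo $\ell$ and applying the Frobenius endomorphism gives $\alpha^\ell \equiv x^\ell + y^\ell$ and $\beta^\ell \equiv x^\ell - y^\ell$, so $\alpha^\ell - \beta^\ell \equiv 2y^\ell$. Fermat's little theorem yields $x^\ell \equiv x$, while Euler's criterion yields
\[
y^\ell \;=\; (y^2)^{(\ell-1)/2}\,y \;=\; (x^2-4)^{(\ell-1)/2}\,y \;\equiv\; \epsilon\, y \pmod{\ell},
\]
where $\epsilon := (x^2-4)^{(\ell-1)/2} \in \{\pm 1\}$ is the Legendre symbol of $x^2-4$ at $\ell$, well-defined precisely because $x^2 - 4 \not\equiv 0 \pmod{\ell}$. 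Substituting back, the identity becomes $2^{\ell-1}\cdot 2y\cdot U_{\ell-1}(x) \equiv 2\epsilon y \pmod{\ell}$; dividing by the unit $2y$ and applying $2^{\ell-1} \equiv 1 \pmod{\ell}$ yields $U_{\ell-1}(x) \equiv \epsilon \in \{\pm 1\} \pmod{\ell}$.

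The argument is short once the Frobenius viewpoint is adopted; the only minor technical issue is justifying the division by $2y$ modulo $\ell$, which follows immediately from the standing hypothesis. An alternative plan would be to expand $U_{\ell-1}(x)$ using the explicit coefficient formula in property (7) and collapse the sum via Lucas's theorem (Theorem~\ref{th:lucas}), but that route appears more computational and less transparent than the Frobenius approach above.
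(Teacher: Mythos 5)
Your proof is correct and follows essentially the same route as the paper's: both invoke the closed-form expression for $U_d$ from property (8) and exploit the $\ell$-th power (Frobenius) map to reduce $\alpha^\ell-\beta^\ell$ modulo $\ell$. The only cosmetic difference is that you collapse the paper's two cases ($\sqrt{x^2-4}\in\bbf_\ell$ or not) into one via Euler's criterion, which has the small added benefit of identifying the sign as the Legendre symbol of $x^2-4$ at $\ell$.
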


\begin{proof}
A local calculation: Let $\alpha = \frac{x+\sqrt{x^2-4}}{2}$ and $\beta = \frac{x-\sqrt{x^2-4}}{2}$, and consider $\alpha$ and $\beta$ as elements of $\bbf_{\ell^2}$. Here, the Frobenius map fixes $\bbf_\ell$, and acts by conjugation on its complement. Hence, if $\sqrt{x^2-4} \in \bbf_\ell$, then $\alpha^\ell = \alpha$ and $\beta^\ell = \beta$, so
\begin{align*}
U_{\ell-1}(x) = \frac{\alpha - \beta}{\sqrt{x^2-4}} = 1 \in \bbf_\ell.
\end{align*}
Otherwise, if $\sqrt{x^2-4} \not \in \bbf_\ell$, then $\alpha^\ell = \beta$ and $\beta^\ell = \alpha$, so
\begin{align*}
U_{\ell-1}(x) = \frac{\beta-\alpha}{\sqrt{x^2-4}} = -1 \in \bbf_\ell.
\end{align*}
\end{proof}

We proceed to compute the valuations of the coefficients $b_i$.
\begin{thm}
Suppose that $t \not \equiv \pm2 \mmod \ell$  and $\ell^m \le i < \ell^{m+1} \le \ell^n$. Then $\nu_\ell(b_i) \ge n-m$ with equality if $i = \ell^m$. 
\end{thm}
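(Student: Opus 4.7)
The plan is to separate the problem into two parts: establishing the lower bound $\nu_\ell(b_i) \ge n-m$ for all $i$ in the given range, and then sharpening this to equality in the case $i = \ell^m$. The rewritten expression
$$b_i = \frac{\ell^n}{i}\,\Sigma_i, \qquad \Sigma_i := \sum_{k=0}^{\floor{(\ell^n-i)/2}} (-1)^k \binom{\ell^n-i-k}{k}\binom{\ell^n-k-1}{i-1} t^{\ell^n-2k-i},$$
will be the workhorse. Each summand of $\Sigma_i$ is an integer, so $\nu_\ell(\Sigma_i) \ge 0$, and the identity $i \cdot b_i = \ell^n \Sigma_i$ gives $\nu_\ell(b_i) = n-\nu_\ell(i) + \nu_\ell(\Sigma_i)$. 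Since $\ell^m \le i < \ell^{m+1}$ forces $\nu_\ell(i) \le m$, the lower bound $\nu_\ell(b_i) \ge n-m$ is immediate.

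The main obstacle is the equality case $i = \ell^m$, where $\nu_\ell(i) = m$ and one must show $\Sigma_{\ell^m} \not\equiv 0 \pmod \ell$. The plan is to reduce the sum mod $\ell$ via Lucas' theorem (Theorem \ref{th:lucas}). A digit analysis in base $\ell$ shows that $\ell^m-1 = (\ell-1,\ldots,\ell-1,0,\ldots,0)_\ell$, so $\binom{\ell^n-k-1}{\ell^m-1}\not\equiv 0 \pmod\ell$ forces the last $m$ digits of $\ell^n-k-1$ to be $\ell-1$, i.e.\ $k \equiv 0 \pmod{\ell^m}$. Writing $k = q\ell^m$ with $0 \le q \le (\ell^{n-m}-1)/2$ and applying Lucas to both binomial coefficients (the last $m$ digits of $\ell^n-(q+1)\ell^m$ and of $q\ell^m$ are zero) collapses each coefficient to $\binom{\ell^{n-m}-q-1}{q} \pmod \ell$. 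Combining this with $(-1)^{q\ell^m} = (-1)^q$ (as $\ell$ is odd) and $t^{\ell^m a}\equiv t^a \pmod\ell$ (Fermat), the reduction becomes
$$\Sigma_{\ell^m} \equiv \sum_{q=0}^{\floor{(\ell^{n-m}-1)/2}}(-1)^q \binom{\ell^{n-m}-q-1}{q} t^{\ell^{n-m}-1-2q} = U_{\ell^{n-m}-1}(t) \pmod \ell,$$
by the explicit formula for the second-kind Chebyshev polynomials.

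The final step is to show $U_{\ell^{n-m}-1}(t) \not\equiv 0 \pmod \ell$. For this I would establish the multiplicative identity $U_{ab-1}(x) = U_{a-1}(T_b(x))\,U_{b-1}(x)$ by differentiating $T_{ab}(x) = T_a(T_b(x))$ using $T_d'(x) = d\,U_{d-1}(x)$, then iterate to obtain
$$U_{\ell^{n-m}-1}(t) = \prod_{j=0}^{n-m-1} U_{\ell-1}(T_\ell^j(t)).$$
Lemma \ref{lem:chebymodl} combined with Fermat's little theorem gives $T_\ell^j(t) \equiv t \pmod \ell$, and the assumption $t \not\equiv \pm 2 \pmod \ell$ then triggers Lemma \ref{lem:modl} to give $U_{\ell-1}(T_\ell^j(t)) \equiv \pm 1 \pmod \ell$. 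Hence each factor is a unit mod $\ell$, so $\Sigma_{\ell^m} \equiv \pm 1 \pmod \ell$, proving $\nu_\ell(\Sigma_{\ell^m}) = 0$ and therefore $\nu_\ell(b_{\ell^m}) = n-m$.
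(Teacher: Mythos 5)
Your argument is correct, and while it shares the paper's overall skeleton (reduce $b_{\ell^m}$ modulo $\ell$ via Lucas, land on Chebyshev-$U$ values, and finish with Lemma \ref{lem:modl}), it differs in two substantive ways that are worth noting. For the lower bound, you read the valuation directly off $b_i = \frac{\ell^n}{i}\Sigma_i$ with $\Sigma_i \in \bbz$ and $\nu_\ell(i)\le m$; the paper instead parametrizes $i=\ell^m+\varepsilon$, invokes the extra identity ${\ell^n-k-1 \choose \ell^m+\varepsilon-1}={\ell^n-k-1\choose \ell^m-1}{\ell^n-\ell^m-k\choose \varepsilon}\big/{\ell^m+\varepsilon\choose \ell^m}$, and then does a carry count to see that ${\ell^m+\varepsilon\choose\ell^m}$ is prime to $\ell$. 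Your route is shorter and avoids that identity entirely; the paper's buys nothing extra here. For the equality case, after the Lucas reduction forces $\ell^m\mid k$, you recognize the surviving sum as $U_{\ell^{n-m}-1}(t)$ and factor it by the composition identity $U_{ab-1}(x)=U_{a-1}(T_b(x))\,U_{b-1}(x)$ (obtained by differentiating $T_{ab}=T_a\circ T_b$), whereas the paper factors the sum digit-by-digit over the base-$\ell$ expansion of $k$ into $(U_{\ell-1}(t))^{n-m}$; the two factorizations are the same fact seen from different angles, and both terminate in $U_{\ell-1}(t)\equiv\pm1\pmod\ell$. A small stylistic point: when you say Lucas "collapses each coefficient to ${\ell^{n-m}-q-1\choose q}$," what actually happens is that ${\ell^n-\ell^m-k\choose k}$ collapses to ${\ell^{n-m}-q-1\choose q}$ while ${\ell^n-k-1\choose \ell^m-1}$ collapses to $1$; the product is as you claim, so nothing breaks. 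One minor advantage of your reduction $t^{\ell^m a}\equiv t^a\pmod\ell$ via $x^\ell\equiv x$ is that it does not require $t$ to be prime to $\ell$, whereas the paper's step $t^{\ell^n-\ell^m}\equiv t^{\ell-1}\equiv 1\pmod\ell$ implicitly does.
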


\begin{proof}
Assume first that $i = \ell^m + \varepsilon$ for some integer $0 <\varepsilon < (\ell-1)\ell^m$. We show that $\nu_\ell(b_i) \ge n-m$. Note that
\begin{align*}
{\ell^n-k-1 \choose \ell^m+\varepsilon-1} &= \frac{(\ell^n-k-1)!}{(\ell^m+\varepsilon)!(\ell^n-\ell^m-k-\varepsilon)!}\\[1mm]
&=\frac{(\ell^n-k-1)!}{\ell^m(\ell^m-1)!(\ell^n-\ell^m-k)!}\frac{\ell^m!\varepsilon!}{(\ell^m+\varepsilon)!}\frac{(\ell^n-\ell^m-k)!}{\varepsilon!(\ell^n-\ell^m-k-\varepsilon)!}\\[1mm]
&= \frac{{\ell^n-k-1 \choose \ell^m-1}{\ell^n-\ell^m-k \choose \varepsilon}}{{\ell^m+\varepsilon \choose \ell^m}}.
\end{align*}
Hence,
\begin{align*} 
b_i &= \frac{\ell^n}{\ell^m+\varepsilon}\sum_{k=0}^{\floor{\frac{\ell^n-i}{2}}}(-1)^k{\ell^n-i-k\choose k}{\ell^n-k-1 \choose \ell^m+\varepsilon-1}t^{\ell^n-2k-i}\nonumber\\
&= \frac{\ell^{n-m}}{{\ell^m+\varepsilon \choose \ell^m}}\sum_{k=0}^{\floor{\frac{\ell^n-i}{2}}}(-1)^k{\ell^n-i-k\choose k}{\ell^n-k-1 \choose \ell^m-1}{\ell^n-\ell^m-k \choose \varepsilon}t^{\ell^n-2k-i}.
\end{align*}
The valuation of the sum is non-negative since it is an integer. Furthermore, $\nu_\ell{\ell^m+\varepsilon \choose \ell^m} = 0$ by Lemma \ref{lem:kummer} since $\ell^m+\varepsilon$ requires no carries in base $\ell$. Thus $\nu_\ell(b_i) \ge n-m.$

Assume now that $i = \ell^m$, i.e.
\begin{align} \label{eq:5.1}
b_{\ell^m} = \ell^{n-m}\sum_{k=0}^{\frac{\ell^n-\ell^m}{2}} (-1)^k {\ell^n-\ell^m-k\choose k}{\ell^n-k-1 \choose \ell^m-1}t^{\ell^n-\ell^m-2k}.
\end{align}
To show that $\nu_\ell(b_{\ell^m}) = n-m$, we show that $\ell^{m-n}b_{\ell^m}$ is relatively prime to $\ell$. It suffices to sum over the terms that are relatively prime to $\ell$ and show that the sum of these terms is not divisible by $\ell$. We write the following numbers in their base-$\ell$ expansions.
\begin{align*}
k &= \sum_{j=0}^{n-1} k_j\ell^j; && \ell^m-1 = \sum_{j=0}^{m-1}(\ell-1)\ell^j; && \ell^n-k-1 = \sum_{j=0}^{n-1} (\ell-k_j-1)\ell^j.
\end{align*}
By Theorem \ref{th:lucas}, the second binomial coefficient in equation \eqref{eq:5.1} satisfies
\begin{align*}
{\ell^n-k-1\choose \ell^m-1} &\equiv {\ell-k_0-1\choose \ell-1}\cdots{\ell-k_{m-1}-1\choose \ell-1}{\ell-k_m-1\choose0} \cdots {\ell-k_{n-1}-1\choose 0}\pmod \ell\\
&\equiv \begin{cases} 1 \pmod \ell& \text{ if }k_0 = \cdots = k_{m-1} = 0 \\ 0 \pmod \ell& \text{ otherwise.} \end{cases}
\end{align*}
That is, ${\ell^n-k-1\choose \ell^m-1}$ is relatively prime to $\ell$ if and only if $\ell^m \mid k$. We continue with the additional assumption that $\ell^m$ divides $k$. Now, the base-$\ell$ expansion of $\ell^n-\ell^m -k$ is
$$\ell^n-\ell^m - k = \sum_{j=m}^{n-1}(\ell-k_j-1)\ell^j.$$
Applying Theorem \ref{th:lucas} to the first binomial coefficient in equation \eqref{eq:5.1}, we see that
\begin{align*}
{\ell^n-\ell^m-k\choose k} \equiv {\ell-k_m-1\choose k_m} \cdots {\ell-k_{n-1}-1\choose k_{n-1}}\pmod \ell,
\end{align*}
which is nonzero if and only if $0 \le k_j \le (\ell-1)/2$ for each $j=m,m+1, \ldots, n-1$.
We have the following:
\begin{align*}
\ell^{m-n}b_{\ell^m} &= \sum_{k=0}^{\frac{\ell^n-\ell^m}{2}} (-1)^k{\ell^n-\ell^m-k \choose k}{\ell^n-k-1\choose \ell^m-1}t^{\ell^n-\ell^m-2k}\\
&\equiv \sum_{k=0}^{\frac{\ell^n-\ell^m}{2}}(-1)^k{\ell-k_m-1\choose k_m} \cdots {\ell-k_{n-1}-1\choose k_{n-1}}t^{\ell^n-\ell^m-2k}\\
&\equiv \prod_{j=m}^{n-1}\sum_{k_j=0}^{\frac{\ell-1}{2}}(-1)^{k_j}{\ell-k_j-1\choose k_j}t^{\ell-2k_j-1}\\
&\equiv (U_{\ell-1}(t))^{n-m} \equiv \pm 1 \pmod \ell.
\end{align*}
The second to last step takes advantage of the fact that $t^{\ell^n-\ell^m} \equiv t^{\ell-1} \equiv 1 \mmod \ell$, and the final step follows from Lemma \ref{lem:modl}. This concludes the proof.
\end{proof}

\begin{remark}
The assumption we made at the beginning of this section was that $t \not\equiv \pm2 \mmod{\ell^2}$. However, in order to apply Lemma \ref{lem:modl}, we need to restrict $t$ even further so that $t \equiv \pm2 \mmod \ell$. However, since we are also assuming $\Phi(t) \equiv 0 \mmod {\ell^2}$, these conditions are equivalent thanks to Proposition \ref{prop:dc2}. Specifically,
$$t \equiv \pm2 \pmod{\ell^2} \quad \text{ if and only if } \quad \left\{\begin{array}{l} t \equiv \pm2 \pmod \ell, \text{ and }\\ \Phi(t) \equiv 0 \pmod{\ell^2}\end{array}\right\}.$$
\end{remark}

\begin{remark}
We note that in this case, an alternative method for obtaining the $\phi$-development is given by the Taylor expansion formula:
$$\Phi(x) = \Phi(t) + \Phi'(t)\phi(x) + \frac{1}{2}\Phi''(t)\phi(x)^2 + \cdots + \frac{1}{\ell^n!}\Phi^{(\ell^n)}(t)\phi(x)^{\ell^n}.$$
\end{remark}

The result here is essentially identical to Proposition \ref{prop:newtonpolyofchebi}. In fact, what we have shown is that the Newton polygon of the Chebyshev polynomial $T_\ell^n(x)$ is invariant under shifts modulo the constant term. In particular, the $\phi$-Newton polygon is $\ell$-regular, and thus the Theorem of the Index (Theorem \ref{th:index}) gives an exact value for $\ind_\ell(\Phi)$. We provide a few examples at the end of this section to illustrate the formula.

\begin{cor} \label{cor:5.9}
Let $v=\nu_\ell(\Phi(t))$. Let $S_m$ be the edge connecting $(\ell^{m-1},n-m+1)$ to $(\ell^m, n-m)$ and $S'$ to be the edge connecting $(0,v)$ to $(\ell^{n-v+1},v-1)$, as before. Then 
$$N_\phi(\Phi) = S' + S_{n-v+2}+\cdots+S_n.$$
Moreover, 
$$\ind_\ell(\Phi) = \sum_{i=1}^{\min\{v-1,n\}} \ell^{n-i}.$$
\end{cor}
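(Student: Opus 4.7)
The plan is to unify the two situations treated above---$t \equiv 0 \pmod{\ell}$ from Proposition \ref{prop:newtonpolyofchebi} and the proposition following it, and $t \not\equiv 0 \pmod{\ell}$ from the theorem just established---into a single description of $N_\phi(\Phi)$, then extract $\ind_\ell(\Phi)$ from the Theorem of the Index. In either case $\overline{\Phi}(x) = (x-\overline{t})^{\ell^n}$, so $\phi(x)$ is $x$ or $x-t$, and in particular $\deg \phi = 1$. Writing the $\phi$-development as $\Phi(x) = \Phi(t) + \sum_{i=1}^{\ell^n} b_i\,\phi(x)^i$, I have $\nu_\ell(\Phi(t)) = v$ by definition of $v$, and the previous valuation estimates give $\nu_\ell(b_i) \ge n-m$ with equality at $i = \ell^m$ whenever $\ell^m \le i < \ell^{m+1}$.

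First I would read off the candidate vertices of $N_\phi(\Phi)$: the point $(0,v)$ together with the points $(\ell^m, n-m)$ for $0 \le m \le n$, all other points $(i, \nu_\ell(b_i))$ sitting weakly above the piecewise-linear envelope these form. Then I would confirm the claimed shape by the same slope comparison used in the proposition following Proposition \ref{prop:newtonpolyofchebi}: the slope of $S'$ is $-1/\ell^{n-v+1}$, which is strictly smaller than the slope $-1/(\ell^{n-v+1}(\ell-1))$ of $S_{n-v+2}$, so the broken line through $(0,v),(\ell^{n-v+1},v-1),(\ell^{n-v+2},v-2),\dots,(\ell^n,0)$ is convex. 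A one-line check confirms that each intermediate vertex $(\ell^m, n-m)$ with $m < n-v+1$ sits strictly above $S'$ and is therefore absorbed, yielding $N_\phi(\Phi) = S' + S_{n-v+2} + \cdots + S_n$.

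Next I would establish $\ell$-regularity. On each side the horizontal length equals the reduced denominator of the slope---$\ell(S') = e(S') = \ell^{n-v+1}$ and $\ell(S_m) = e(S_m) = \ell^{m-1}(\ell-1)$---so $d(S') = d(S_m) = 1$. Every residual polynomial is therefore linear and trivially separable, so $\Phi$ is $\phi$-regular. Theorem \ref{th:index} then gives $\ind_\ell(\Phi) = \ind_\phi(\Phi)$, and since $\deg \phi = 1$ this equals the number of integer points $(x,y)$ with $x,y \ge 1$ lying on or below $N_\phi(\Phi)$.

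Finally I would compute this count by stratifying according to height rather than column. For each integer $y$ the polygon has height at least $y$ on a maximal interval $[0, x_y]$; inspection of the vertex data shows $x_y = \ell^{n-y}$ for each $1 \le y \le v-1$, since at such a height the polygon passes exactly through the vertex $(\ell^{n-y}, y)$. Summing over $y$ gives $\sum_{y=1}^{v-1} \ell^{n-y}$ in the ``generic'' regime. The main delicacy---and the reason the formula features $\min\{v-1,n\}$---is the extremal case $v > n$: there the first edge from $(0,v)$ to $(1,n)$ becomes too steep to support lattice points of height $y > n$ strictly to the right of the $y$-axis, which forces the cutoff at $y = n$. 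Collecting these steps gives both the stated description of $N_\phi(\Phi)$ and the closed form for $\ind_\ell(\Phi)$.
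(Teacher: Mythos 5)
Your proposal is correct and follows essentially the same route as the paper, which presents Corollary \ref{cor:5.9} as an immediate consequence of the preceding Newton-polygon computations (Proposition \ref{prop:newtonpolyofchebi} and its analogue for $t\not\equiv 0 \bmod \ell$) together with the observation that every side has degree $1$, hence $\Phi$ is $\ell$-regular and Theorem \ref{th:index} applies with equality. Your row-by-row lattice-point count and your explicit handling of the cutoff at $\min\{v-1,n\}$ when $v>n$ are just slightly more detailed versions of what the paper leaves implicit.
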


In particular, we have a precise formula for the discriminant of the number field in the following case. 

\begin{cor} \label{cor:fielddisctpm2sf}
Suppose that $t+2$ and $t-2$ are square-free. Let $v = \nu_\ell(\Phi(t))$. Then
\begin{align*}
\Delta(K) = \ell^{n\ell^n-2\cdot\tiny\ind_\ell(\Phi)}(4-t^2)^{(\ell^n-1)/2}.
\end{align*}
\end{cor}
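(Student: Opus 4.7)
The plan is to combine the identity $D(\Phi) = \Delta(K) \cdot \ind(\Phi)^2$ from \eqref{eq:1.3} with Proposition \ref{prop:disc} and a prime-by-prime analysis of $\ind(\Phi)$. Since the only primes dividing $D(\Phi) = \ell^{n\ell^n}(4-t^2)^{(\ell^n-1)/2}$ are $\ell$ and the primes dividing $t^2-4$, it suffices to compute $\ind_p(\Phi) = \nu_p(\ind(\Phi))$ for each such prime, then assemble
\begin{align*}
\Delta(K) = \frac{D(\Phi)}{\prod_p p^{2\ind_p(\Phi)}}.
\end{align*}

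For the prime $\ell$, I would split into two cases according to the value of $v = \nu_\ell(\Phi(t))$. If $v \le 1$, then Theorem \ref{th:dc1} (applied using that $t \pm 2$ square-free forces $t \not\equiv \pm 2 \mmod{\ell^2}$ whenever $\ell$ is odd, and hence $\Phi(t) \not\equiv 0 \mmod{\ell^2}$ is equivalent to $v \le 1$) gives $\ind_\ell(\Phi) = 0$, which matches the empty-sum convention in Corollary \ref{cor:5.9}. If $v \ge 2$, the same hypothesis on $t \pm 2$ still guarantees $t \not\equiv \pm 2 \mmod{\ell^2}$, placing us squarely inside the hypotheses of Section \ref{sec:ell-index}; Corollary \ref{cor:5.9} then yields $\ind_\ell(\Phi) = \sum_{i=1}^{\min\{v-1,n\}} \ell^{n-i}$ directly.

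For a prime $p \mid t^2-4$ with $p \neq \ell$, the assumption that $t+2$ and $t-2$ are square-free means $t \not\equiv \pm 2 \mmod{p^2}$. By the $p$-maximality criterion established in the proof of Theorem \ref{th:dc1}, this gives $\ind_p(\Phi) = 0$. Therefore $\ind(\Phi) = \ell^{\ind_\ell(\Phi)}$, and substituting into \eqref{eq:1.3} together with the formula of Proposition \ref{prop:disc} produces
\begin{align*}
\Delta(K) = \frac{\ell^{n\ell^n}(4-t^2)^{(\ell^n-1)/2}}{\ell^{2\ind_\ell(\Phi)}} = \ell^{n\ell^n - 2\ind_\ell(\Phi)}(4-t^2)^{(\ell^n-1)/2},
\end{align*}
which is the desired identity. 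There is no real obstacle here: all the heavy lifting has already been done in Theorem \ref{th:dc1} (which handles the primes dividing $t^2-4$) and Corollary \ref{cor:5.9} (which handles the prime $\ell$). The only care required is to verify that the square-freeness hypothesis on $t \pm 2$ is strong enough to enforce $t \not\equiv \pm 2 \mmod{\ell^2}$, so that the $\ell$-regularity established in Section \ref{sec:ell-index} applies and the Theorem of the Index gives an exact value rather than just a lower bound.
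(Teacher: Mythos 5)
Your proposal is correct and is essentially the argument the paper intends (the corollary is stated without explicit proof precisely because it is this assembly of equation \eqref{eq:1.3}, Proposition \ref{prop:disc}, the $p$-maximality analysis from Theorem \ref{th:dc1} for primes dividing $t^2-4$, and Corollary \ref{cor:5.9} for $\ell$). Your added care in checking that square-freeness of $t\pm2$ forces $t\not\equiv\pm2\mmod{\ell^2}$, so that the hypotheses of Section \ref{sec:ell-index} are met, is exactly the right verification.
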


\begin{example} 
Fix $t_0=3^6\cdot 691=451251$. We note that $t_0+2$ and $t_0-2$ are square-free.

\begin{enumerate}[I.]
\item Consider the polynomial $T_3^3(x)-t_0$. By Theorem \ref{th:zeromodellsquared}, $T_3(t)-t\equiv 0 \mmod 9$ if and only if $t \equiv 0,\pm2 \mmod 9$. Since $\nu_3(t_0) = 6$, it is assured that $T_3^3(t_0) - t_0 \equiv 0 \mmod 9$. By Corollary \ref{cor:5.9}, 
$$\ind_\ell(T_3^3(x)-t_0) = \sum_{i=1}^3 3^{3-i} = 13,$$
and by Corollary \ref{cor:fielddisctpm2sf},
$$\Delta(K_{3,3,t_0}) = 3^{55}(4-t_0^2)^{13}.$$

\begin{itsapicture}
\begin{figure}[h!]
\begin{tikzpicture} [x=.5 cm,y=.5cm]
\foreach \x / \y in {1/3, 3/2, 9/1}
	\draw[dashed,gray!60] (0,\y) -- (\x,\y) -- (\x,0);
\draw (-1,0) -- (28,0)
	(0,-.5) -- (0,6.5)
	(1,-.1) node[below] {1}-- (1,.1)
	(3,-.1) node[below] {3}-- (3,.1)
	(9,-.1) node[below] {9} -- (9,.1)
	(27,-.1) node[below] {27}-- (27,.1);
\foreach \y in {1,...,6}
	\draw (-.2,\y) node[left] {\y} -- (.2,\y);
\filldraw[blue, line width = 2pt] (27,0) circle (2pt)
	(0,6) circle (2pt);
\node[below left] at (0,0) {0};
\draw[blue,line width =2pt] (27,0) -- (9,1) -- (3,2) -- (1,3) -- (0,6);
\foreach \x in {1,...,9}
	\filldraw[orange,line width = 2pt] (\x,1) circle (2pt);
\foreach \x in {1,2,3}
	\filldraw[orange,line width = 2pt] (\x,2) circle (2pt);
\filldraw[orange, line width = 2pt] (1,3) circle (2pt);
\end{tikzpicture}
\caption{The $\phi$-Newton polygon for $T_3^3(x)-t_0$ with $\phi(x) = x$. The polynomial $\phi$ is determined by reducing $T_3^3(x)-t_0$ modulo 3: $T_3^3(x)-t_0 \equiv x^{27} \mmod 3$. The 3-index is computed by counting the integral points lying below the polygon.}
\end{figure}
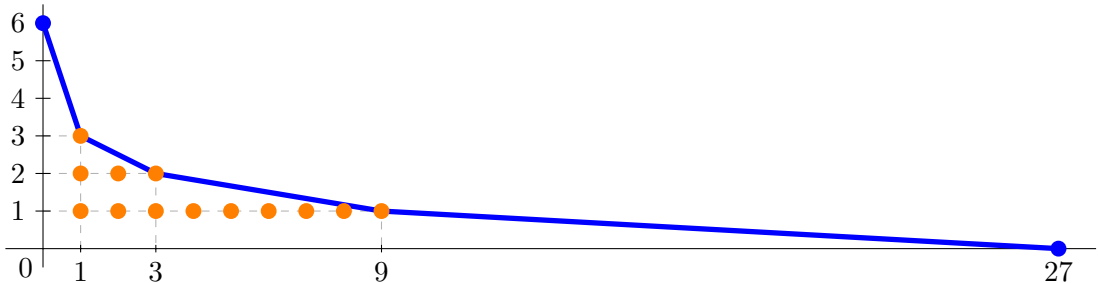
\end{itsapicture}

\item Consider the polynomial $T_5^3(x)-t_0$. By Theorem \ref{th:zeromodellsquared}, $T_5(t)-t \equiv 0 \mmod{25}$ if and only if $t \equiv 0, \pm1, \pm2 \mmod{25}$, and we note that $t_0 \equiv 1 \mmod{25}$. Moreover, $\nu_5(T_5^3(t_0)-t_0) = 4$, so by Corollary \ref{cor:5.9},
$$\ind_5(T_5^3-t_0) = \sum_{i=1}^3 5^{3-i} = 31,$$
and by Corollary \ref{cor:fielddisctpm2sf},
$$\Delta(K_{5,3,t_0}) = 5^{313}(4-t_0^2)^{62}.$$

\begin{itsapicture}
\begin{figure}[h!]
\begin{tikzpicture} [x=.5 cm,y=.5cm]
\draw (-1,0) -- (27.35,0) 
	(27.65,0) -- (31,0)
	(0,-.5) -- (0,4.5); 
\foreach \x in {1,5,25} 
	\draw (\x,-.1) node[below] {\x}-- (\x,.1);
\draw (30,-.1) node[below] {125}-- (30,.1)
	(27.35,-.3) -- (27.45,.3)
	(27.55,-.3) -- (27.65,.3);
\foreach \y in {1,...,4} 
	\draw (-.2,\y) node[left] {\y} -- (.2,\y);
\filldraw[blue, line width = 2pt] (30,0) circle (2pt) 
	(0,4) circle (2pt);
\node[below left] at (0,0) {0}; 
\draw[blue,line width =2pt] (0,4) -- (1,3) -- (5,2) -- (25,1) -- (27.3,.9) (28,.1) -- (30,0); 
\draw[dashed,line width = 1.5pt,blue] ; 
\foreach \x / \y in {1/3, 5/2, 25/1}
	{
	\draw[dashed,gray!60] (0,\y) -- (\x,\y) -- (\x,0);
	\foreach \i in {1,...,\x} 
		\filldraw[orange,line width = 2pt] (\i,\y) circle (2pt);
	}
\end{tikzpicture}
\caption{The $\phi$-Newton polygon for $T_5^3(x)-t_0$ with $\phi(x) = x+1$. The polynomial $\phi$ is determined by reducing $T_5^3(x)-t_0$ modulo 5: $T_5^3(x)-t_0 \equiv (x+1)^{125} \mmod 5$.}
\end{figure}
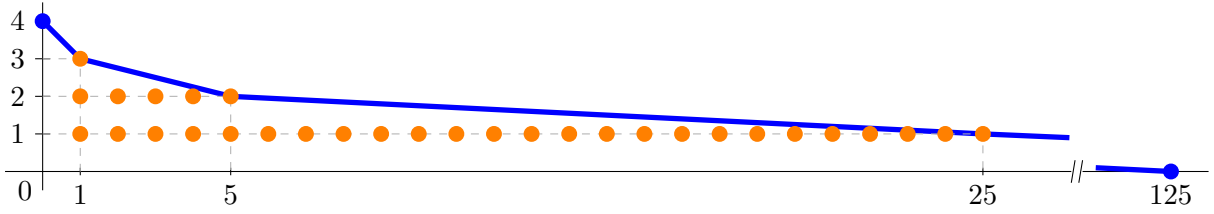
\end{itsapicture}

\item Consider the polynomial $T_7^3(x)-t_0$. Note that $\nu_7(T_7^3(t_0) - t_0) = 2$. By Corollary \ref{cor:5.9},
$$\ind_7(T_7^3(x)-t_0) = \sum_{i=1}^1 7^{3-i} = 49,$$
and by Corollary \ref{cor:fielddisctpm2sf},
$$\Delta(K_{7,3,t_0}) = 7^{931}(4-t_0^2)^{171}.$$
\end{enumerate}

\begin{itsapicture}
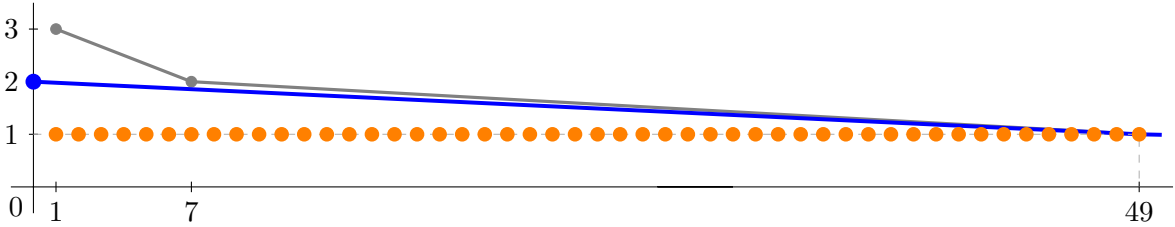
\begin{figure}[h!]
\begin{tikzpicture} [x=.3 cm,y=.7cm]
\draw (-1,0) -- (50.5,0) 
	(27.65,0) -- (31,0)
	(0,-.5) -- (0,3.5); 
\foreach \x in {1,7,49} 
	\draw (\x,-.1) node[below] {\x}-- (\x,.1);
\foreach \y in {1,...,3} 
	\draw (-.2,\y) node[left] {\y} -- (.2,\y);
\filldraw[blue, line width = 2pt] (0,2) circle (2pt); 
\filldraw[gray] (1,3) circle (2pt) 
	(7,2) circle (2pt);
\draw[very thick,gray] (1,3) -- (7,2) -- (49,1); 
\node[below left] at (0,0) {0}; 
\draw[blue,line width =1.5pt] (0,2) -- (49,1) -- (50,.99); 
\foreach \x / \y in {49/1}
	{
	\draw[dashed,gray!60] (0,\y) -- (\x,\y) -- (\x,0);
	\foreach \i in {1,...,\x} 
		\filldraw[orange,line width = 1.5pt] (\i,\y) circle (2pt);
	}
\end{tikzpicture}
\caption{The $\phi$-Newton polygon of $T_7^3(x)-t_0$ where $\phi(x) = x+3$. The gray line indicates the $\phi$-Newton polygon of $T_7^3(x)-t_0$ before considering the constant term of the $\phi$-development. The end vertex of the polygon (343,0) is not shown.}
\end{figure}
\end{itsapicture}
\end{example}

\section{Index calculations: On the multiplicity of $p$} \label{sec:p-index}
In the proof of Theorem \ref{th:dc1}, we showed that for primes $p \neq \ell$, $\bbz[\theta]$ is $p$-maximal if and only if $t \not\equiv \pm2 \mmod{p^2}$. In this section, we give a formula for $\ind_p(\Phi)$ when $p$ is an odd prime different from $\ell$ and $t$ is odd. Under these conditions, $p$-regularity is not guaranteed, however, it turns out that regularity is not a necessary condition in all cases. Let $t \equiv \pm2 \mmod{p^2}$ and write
\begin{align} \label{eq:7.1}
\Phi(x) \equiv (x\pm2)\phi_1(x)^2 \cdots \phi_r(x)^2\pmod p,
\end{align}
where $\phi_i(x)$ are irreducible factors modulo $p$. We prove the following.

\begin{thm} \label{th:6.1}
Let $p \neq \ell$ be an odd prime such that $t \equiv \pm2 \mmod{p^2}$. Then for each irreducible factor $\phi_i$ in equation \eqref{eq:7.1}, there exists a polynomial $\hat\phi_i$ such that $\hat\phi_i \equiv \phi_i\mmod p$, and the $\hat\phi_i$-polynomial is one-sided with vertices $(0,\nu_p(t^2-4))$ and $(2,0)$, i.e.
$$\ind_{\hat\phi_i}(\Phi) = \Floor{\frac{\nu_p(t^2-4)}{2}}\deg(\hat\phi_i).$$
Moreover, the factor $x\pm2$ does not contribute to $\ind_p(\Phi)$, i.e. $\ind_{(x\pm2)}(\Phi) = 0$.
\end{thm}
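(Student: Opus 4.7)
The plan is to exploit Lemma \ref{lem:2.1}, which gives $T_\ell^n(x) - \overline t = (x - \overline t)\tau(x)^2$ for some monic $\tau(x) \in \bbz[x]$, where $\overline t = \pm 2$ is the reduction of $t$ modulo $p$. Writing
\[
\Phi(x) = (x - \overline t)\tau(x)^2 + (\overline t - t)
\]
and setting $v = \nu_p(t^2 - 4) = \nu_p(\overline t - t)$ (the second equality holds because $p$ is odd, so exactly one of $t \pm 2$ is coprime to $p$), I would first record the auxiliary identity $\tau(\overline t)^2 = \ell^{2n}$, obtained by differentiating $(x - \overline t)\tau(x)^2 = T_\ell^n(x) - \overline t$, evaluating at $\overline t$, and using $U_{\ell^n-1}(\pm 2) = \ell^n$. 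Since $p \neq \ell$, $\tau(\overline t)$ is a $p$-adic unit, so $(x - \overline t) \nmid \overline \tau$; combined with Proposition \ref{prop:2.3}, this guarantees that $x - \overline t$ is disjoint from the $\phi_i$'s and that $\phi_i^2 \,\|\, \overline\Phi$, pinning the relevant polygon lengths at $2$ and $1$.

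For a factor $\phi_i$, I would invoke Hensel's lemma to lift $\overline\tau = \phi_1 \cdots \phi_r$ to a factorization $\tau = \tilde\phi_i \tilde h$ in $\bbz_p[x]$, with $\tilde\phi_i \equiv \phi_i \pmod p$ and $\gcd(\overline{\tilde\phi_i}, \overline{\tilde h}) = 1$, and define $\hat\phi_i \in \bbz[x]$ to be a monic polynomial with $\hat\phi_i \equiv \tilde\phi_i \pmod{p^{v+1}}$. Substituting the Hensel factorization into $\Phi$ yields the identity
\[
\Phi(x) = (\overline t - t) + (x - \overline t)\tilde h(x)^2 \cdot \tilde\phi_i(x)^2
\]
in $\bbz_p[x]$, from which the $\tilde\phi_i$-development reads off immediately: $a_0 = \overline t - t$, $a_1 = 0$, and $a_2$ equal to the remainder of $(x - \overline t)\tilde h(x)^2$ modulo $\tilde\phi_i$. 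At a root $\alpha$ of $\tilde\phi_i$, both $\alpha - \overline t$ (since $\phi_i \neq x - \overline t$) and $\tilde h(\alpha)$ (by Hensel coprimality) are units, so $\nu_p(a_2) = 0$. Uniqueness of the development modulo $p^{v+1}$ transfers these valuations to the integral lift: $\nu_p(a_0^{\hat\phi_i}) = v$, $\nu_p(a_1^{\hat\phi_i}) \geq v + 1 > v/2$, and $\nu_p(a_2^{\hat\phi_i}) = 0$. Hence $N_{\hat\phi_i}(\Phi)$ is the single edge from $(0, v)$ to $(2, 0)$, and counting the $\lfloor v/2 \rfloor$ admissible lattice points in the column $i = 1$ yields $\ind_{\hat\phi_i}(\Phi) = \lfloor v/2 \rfloor \deg \hat\phi_i$.

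For the factor $x - \overline t$ itself, I would use the Taylor expansion at $\overline t$: with $u = x - \overline t$,
\[
\Phi(x) = (\overline t - t) + \ell^{2n}\, u + 2\tau(\overline t)\tau'(\overline t)\, u^2 + \cdots,
\]
so $a_0 = \overline t - t$ has valuation $v$ and $a_1 = \ell^{2n}$ is a $p$-adic unit. Thus $N_{(x - \overline t)}(\Phi)$ is the single edge from $(0, v)$ to $(1, 0)$, which has horizontal width one and contains no lattice points strictly above the horizontal axis and on or below it, giving $\ind_{(x - \overline t)}(\Phi) = 0$.

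The principal obstacle is the passage from the Hensel factor $\tilde\phi_i \in \bbz_p[x]$ to an honest integral lift $\hat\phi_i \in \bbz[x]$: the exact equality $a_1 = 0$ in the Hensel development only produces the congruence $a_1^{\hat\phi_i} \equiv 0 \pmod{p^{v+1}}$ for the integral development, and one must verify that this is strong enough to place the point $(1,\nu_p(a_1^{\hat\phi_i}))$ on or above the segment from $(0,v)$ to $(2,0)$; this is precisely why the Hensel lift is truncated modulo $p^{v+1}$ rather than modulo $p$.
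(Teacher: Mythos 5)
Your proposal is correct and follows essentially the same route as the paper: both arguments rest on the factorization $T_\ell^n(x)-\overline t=(x-\overline t)\tau(x)^2$, a Hensel lift of the simple factors of $\overline\tau$ to $p$-adic precision exceeding $\nu_p(t^2-4)$ so that the degree-$0$ and degree-$1$ coefficients of the $\hat\phi_i$-development are controlled while $\nu_p(a_2)=0$, and the Taylor expansion at $\pm2$ together with $U_{\ell^n-1}(\pm2)=\ell^n$ for the linear factor. The only (harmless) cosmetic differences are that you perform the lift exactly in $\bbz_p[x]$ and then truncate, and that you explicitly verify $\tau(\overline t)$ is a $p$-adic unit, a point the paper leaves implicit.
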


Consequently, if $\nu_p(t^2-4)$ is odd, then the residual polynomial associated with the $\hat\phi_i$-polygon is degree 1, and hence $\Phi$ is $\hat\phi_i$-regular for each $\hat\phi_i$. It follows from Theorem \ref{th:index} that
$$\ind_p(\Phi) = \sum_{i=1}^r \Floor{\frac{\nu_p(t^2-4)}{2}}\deg(\hat\phi_i) = \Floor{\frac{\nu_p(t^2-4)}{2}}\frac{\ell^n-1}{2}.$$
If $\nu_p(t^2-4)$ is even, regularity is not guaranteed since the residual polynomial is degree 2, so at best
$$\ind_p(\Phi) \ge \sum_{i=1}^r \Floor{\frac{\nu_p(t^2-4)}{2}}\deg(\hat\phi_i) = \frac{\nu_p(t^2-4)}{2}\frac{\ell^n-1}{2}.$$
On the other hand, the valuation of the index is bounded by the valuation of $\disc(\Phi)$.
$$\ind_p(\Phi) \le \frac{1}{2}\nu_p\left((t^2-4)^{(\ell^n-1)/2}\right) = \frac{\nu_p(t^2-4)}{2}\frac{\ell^n-1}{2}.$$
Thus we have derived the following result.

\begin{cor}
If $p\neq\ell$ is an odd prime and $t \equiv \pm2 \mmod{p^2}$, then 
$$\ind_p(\Phi) = \Floor{\frac{\nu_p(t^2-4)}{2}}\frac{\ell^n-1}{2}.$$
\end{cor}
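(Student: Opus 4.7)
The plan is to combine Theorem \ref{th:6.1}, the Theorem of the Index (Theorem \ref{th:index}), and the discriminant formula (Proposition \ref{prop:disc}) to pin down $\ind_p(\Phi)$ by squeezing it between matching upper and lower bounds. First I would record the ingredients from Theorem \ref{th:6.1}: for each irreducible factor $\phi_i$ in \eqref{eq:7.1} there is a lift $\hat\phi_i$ whose $\hat\phi_i$-polygon is a single segment from $(0,\nu_p(t^2-4))$ to $(2,0)$, giving
$$\ind_{\hat\phi_i}(\Phi) = \Floor{\tfrac{\nu_p(t^2-4)}{2}}\deg(\hat\phi_i),$$
while the factor $x\pm2$ contributes nothing. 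Since $\overline\Phi \equiv (x\pm2)\phi_1^2\cdots\phi_r^2$ has degree $\ell^n$, summing degrees gives $\sum_{i=1}^r \deg(\hat\phi_i) = (\ell^n-1)/2$.

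Next I would split into two cases based on the parity of $v:=\nu_p(t^2-4)$. If $v$ is odd, then writing the slope of the $\hat\phi_i$-polygon as $-v/2$ in lowest terms yields $e(S)=2$ and length $\ell(S)=2$, so $d(S)=1$ and the residual polynomial is a separable linear polynomial. Thus $\Phi$ is $\hat\phi_i$-regular for every $i$, hence $p$-regular, and the equality clause of Theorem \ref{th:index} gives
$$\ind_p(\Phi) = \sum_{i=1}^r \ind_{\hat\phi_i}(\Phi) = \Floor{\tfrac{v}{2}}\cdot\frac{\ell^n-1}{2}.$$

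The case $v = 2k$ even is the step that needs care, because the residual polynomial then has degree $2$ and regularity is not automatic; this is what I expect to be the main obstacle. I would handle it by matching a lower and an upper bound. The inequality part of Theorem \ref{th:index} still yields
$$\ind_p(\Phi) \;\ge\; \sum_{i=1}^r \ind_{\hat\phi_i}(\Phi) \;=\; k\cdot\frac{\ell^n-1}{2}.$$
For the reverse inequality, Proposition \ref{prop:disc} gives $\nu_p(D(\Phi)) = \frac{\ell^n-1}{2}\cdot v$ (since $p\neq\ell$), and \eqref{eq:1.3} implies $2\,\ind_p(\Phi) \le \nu_p(D(\Phi))$, so
$$\ind_p(\Phi) \;\le\; \frac{v}{2}\cdot\frac{\ell^n-1}{2} \;=\; k\cdot\frac{\ell^n-1}{2}.$$
The bounds coincide, forcing equality, and since $\Floor{v/2}=k$ this is precisely the claimed value. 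Combining the two parity cases completes the argument.
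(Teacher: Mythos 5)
Your proposal is correct and follows essentially the same route as the paper: invoke Theorem \ref{th:6.1} for the one-sided $\hat\phi_i$-polygons and the vanishing contribution of $x\pm2$, apply the equality clause of the Theorem of the Index in the odd-valuation case via regularity of the degree-one residual polynomials, and in the even case squeeze $\ind_p(\Phi)$ between the lower bound from Theorem \ref{th:index} and the upper bound $\ind_p(\Phi)\le\tfrac12\nu_p(D(\Phi))$ coming from Proposition \ref{prop:disc} and equation \eqref{eq:1.3}. No gaps; this is the paper's argument, spelled out slightly more explicitly.
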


Now that the complete factorization of $\ind(\Phi)$ has been determined, the following conditional discriminant formula follows from equation \eqref{eq:1.3}.

\begin{cor} \label{cor:6.3}
Let $\Phi$ and $K$ as before, and write $t^2-4 = A^2B$ where $B$ is square-free. If $t \not\equiv \pm2 \mmod{\ell^2}$ and $t \not \equiv 2 \mmod 4$, then
\begin{align*}
\Delta(K) = \ell^{n\ell^n-2\cdot\tiny\ind_\ell(\Phi)}\prod_{p \text{ \rm\tiny prime\,} \mid B} p^{(\ell^n-1)/2},
\end{align*}
where $\ind_\ell(\Phi)$ is defined in Corollary \ref{cor:5.9}.
\end{cor}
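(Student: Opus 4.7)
The plan is to start from equation \eqref{eq:1.3}, which gives $\Delta(K) = D(\Phi)/\ind(\Phi)^2$, substitute the closed form $D(\Phi) = \ell^{n\ell^n}(4-t^2)^{(\ell^n-1)/2}$ supplied by Proposition \ref{prop:disc}, and then compute $\ind(\Phi)^2 = \prod_p p^{2\ind_p(\Phi)}$ one prime at a time using the two index formulas already established.

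Only three types of primes can contribute. For $p = \ell$, Corollary \ref{cor:5.9} furnishes $\ind_\ell(\Phi)$ directly. For odd primes $p \neq \ell$ with $t \equiv \pm 2 \mmod{p^2}$, the corollary immediately preceding this statement gives $\ind_p(\Phi) = \Floor{\nu_p(t^2-4)/2}(\ell^n-1)/2$. For primes $p \neq \ell$ that divide $t^2-4$ but with $t \not\equiv \pm 2 \mmod{p^2}$, Theorem \ref{th:dc1} guarantees that $\bbz[\theta]$ is $p$-maximal, so $\ind_p(\Phi) = 0$. Finally, the hypothesis $t \not\equiv 2 \mmod 4$ (together with the standing assumption of the section that $t$ is odd) rules out any $2$-adic contribution, since then $t^2-4$ is odd and $\ind_2(\Phi) = 0$ trivially.

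The combinatorial identity underpinning the assembly is
\[
\nu_p(t^2-4) - 2\Floor{\nu_p(t^2-4)/2} = \nu_p(B),
\]
valid for every prime $p$, and reflecting that $\nu_p(B) \in \{0,1\}$ records the parity of $\nu_p(t^2-4)$. Applied to odd $p \neq \ell$, this yields uniformly $\nu_p(\Delta(K)) = \nu_p(B)(\ell^n-1)/2$, regardless of whether $p$ divides $A$, $B$, or neither. For $p = \ell$ the hypothesis $t \not\equiv \pm 2 \mmod{\ell^2}$, combined with the elementary observation that $\gcd(t-2,t+2) \mid 4$, forces $\nu_\ell(t^2-4) \in \{0,1\}$, so the same identity folds the $\ell$-part of $(4-t^2)^{(\ell^n-1)/2}$ cleanly into the $\prod_{p \mid B}$ factor, producing
\[
\nu_\ell(\Delta(K)) = n\ell^n - 2\ind_\ell(\Phi) + \nu_\ell(B)(\ell^n-1)/2.
\]
Multiplying these local contributions yields the claimed formula.

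The only real subtlety is the bookkeeping at $p = \ell$: its contribution is partitioned between the explicit factor $\ell^{n\ell^n - 2\ind_\ell(\Phi)}$ and (when $\ell \mid B$) the product $\prod_{p \mid B} p^{(\ell^n-1)/2}$, and one must verify that the resulting $\ell$-adic exponent matches $\nu_\ell(D(\Phi)) - 2\ind_\ell(\Phi)$. Everything else is a direct assembly of Proposition \ref{prop:disc}, Corollary \ref{cor:5.9}, and the preceding corollary on $\ind_p(\Phi)$.
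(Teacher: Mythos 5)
Your proposal is correct and follows essentially the same route as the paper, which simply remarks that the formula "follows from equation (1.3)" once $\ind_\ell(\Phi)$ (Corollary \ref{cor:5.9}) and $\ind_p(\Phi)$ for $p\neq\ell$ (the preceding corollary, together with the $p$-maximality criterion of Theorem \ref{th:dc1}) are in hand. Your prime-by-prime bookkeeping via $\nu_p(t^2-4)-2\lfloor\nu_p(t^2-4)/2\rfloor=\nu_p(B)$, and the observation that $t\not\equiv\pm2\pmod{\ell^2}$ forces $\nu_\ell(t^2-4)\in\{0,1\}$, just makes explicit the verification the paper leaves to the reader.
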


We conclude this section with the proof of Theorem \ref{th:6.1}.

\begin{proof}(Theorem \ref{th:6.1})
Recall that $T_\ell^n(x)\pm2 = (x\pm2)\tau(x)^2$ where 
\begin{align*}
\tau(x) \equiv \phi_1(x) \cdots \phi_r(x)\pmod p.
\end{align*}
Since $\tau$ has no repeated roots modulo $p$, Hensel lifting ensures that there exist lifts $\hat\phi_1, \ldots, \hat\phi_r$ such that 
\begin{align*}
\tau(x) \equiv \hat\phi_1(x) \cdots \hat\phi_r(x) \pmod{p^e}
\end{align*}
for $e$ arbitrarily large. Take $e > \nu_p(t^2-4)$ (although $e > \nu_p(t^2-4)/2$ would be sufficient) and fix a lift $\phi = \hat\phi_i$. Then the $\phi$-development of $T_\ell^n(x)\pm 2$ is 
\begin{align*}
T_\ell^n(x)\pm2 = A_0(x) + A_1(x)\phi(x) + A_2(x)\phi(x)^2 + \cdots.
\end{align*}
Note that $T_\ell^n(x)\pm2 = (x\pm2)\tau(x)^2 \equiv (x\pm2)\hat\phi_1(x)^2 \cdots \hat\phi_r(x)^2 \mmod{p^e}$, and hence $\nu_p(A_2) = 0$ and
\begin{align*}
A_0(x) + A_1(x)\phi(x) \equiv 0 \pmod{p^e}.
\end{align*}
In particular, since $\phi$ is monic, $\nu_p(A_0) \ge \nu_p(A_1) \ge e > \nu_p(t^2-4)$. Thus the $\phi$-development of $\Phi$ is
\begin{align*}
\Phi(x) &= T_\ell^n(x)-t = T_\ell^n(x)-\overline t + \overline t - t \\
&= \overline t - t + A_0(x) + A_1(x)\phi(x) + A_2(x)\phi(x)^2 + \cdots
\end{align*}
where $\nu_p(\overline t - t + A_0) = \nu_p(\overline t - t) = \nu_p(t^2-4), \nu_p(A_1) > \nu_p(t^2-4)$, and $\nu_p(A_2) = 0$, and therefore $\hat\phi_1, \ldots, \hat\phi_r$ provide desired lifts.

We now show that $\ind_{(x\pm2)}(\Phi) = 0$. The $(x\pm2)$-development is given by Taylor's expansion centered at $\pm2$:
\begin{align*}
\Phi(x) &= \Phi(\pm2) + \Phi'(\pm2)(x\pm2) + \cdots\\
&= \Phi(\pm2) + \ell^nU_{\ell^n-1}(\pm2)(x\pm2) + \cdots,
\end{align*}
where $U_d$ denotes the degree-$d$ Chebyshev polynomial of the second kind. Recalling the recursion $U_d(x) = xU_{d-1}(x) -U_{d-2}(x)$ (and $U_0(x) = 1, U_1(x) = x$), it is straightforward induction to show that $U_d(2) = d+1$. Moreover, since $U_{\ell^n-1}$ is an even function, it follows that $\nu_p(\ell^nU_{\ell^n-1}(\pm2)) =\nu_p(\ell^{2n})= 0$, and thus the $(x\pm2)$-polygon is one-sided with vertices $(0,\nu_p(\Phi(\pm2)))$ and $(1,0)$. We conclude that $\ind_{(x\pm2)}(\Phi) = 0$.
\end{proof}

\section{Integral basis} \label{sec:integral basis}
The Montes algorithm also provides an efficient method for determining an integral basis for the ring of integers $\cal O_K$. In this section we summarize their procedure as it pertains to our situation.

For this discussion we assume that $\Phi$ is regular with respect to every prime. Fix a prime $p$ for which $\bbz[\theta]$ is not maximal. Let $\hat\phi_i$ be a lift of an irreducible factor of $\overline\Phi$ for which $\Phi$ is $\hat\phi_i$-regular. We define the quotients attached to the $\hat\phi_i$-developement of $\Phi$ to be the polynomials
\begin{align*}
\Phi(x) &= \hat\phi_i(x)q_{i,1}(x) + a_{i,0}(x)\\
q_{i,1}(x) &= \hat\phi_i(x) q_{i,2}(x) + a_{i,1}(x)\\
&\;\;\vdots\\
q_{i,r-1}(x) &= \hat\phi_i(x)q_{i,r}(x) + a_{i,r-1}(x)\\
q_{i,r}(x) &= a_{i,r}(x).
\end{align*}
Additionally, for $1 \le j \le r$, we identify the points $(j,y_{i,j})$ on the polygon $N_{\hat\phi_i}(\Phi)$. 

\begin{cor} \label{th:ellintegralbasis}
The collection $\{q_{i,j}(\theta)/p^{\floor{ y_{i,j}}}\}$ contains a $p$-integral basis for $\cal O_K$.
\end{cor}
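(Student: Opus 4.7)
The plan is to work locally at each prime $p$ dividing $\ind(\Phi)$ and invoke the $p$-regularity of $\Phi$ together with the structural results of the Montes algorithm \cite{GMN08,GMN09,GMN12}. The corollary is essentially a translation into our notation of the construction underlying the Theorem of the Index, so the proof reduces to verifying, for each relevant $p$, two assertions: first, that every listed element $q_{i,j}(\theta)/p^{\floor{y_{i,j}}}$ lies in $\cal O_K$; second, that these elements together with $1,\theta,\ldots,\theta^{\ell^n-1}$ generate the $p$-maximal order inside $\cal O_K$.

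For the first assertion, I would read the $p$-adic valuation of $q_{i,j}(\theta)$ directly off the $\hat\phi_i$-Newton polygon. Under the $p$-regularity hypothesis, the primes $\mathfrak{P}$ of $\cal O_K$ above $p$ correspond bijectively to the sides of the various $\hat\phi_i$-polygons. At a prime $\mathfrak{P}$ lying above $\hat\phi_i$, a standard Newton polygon argument shows that the normalized valuation satisfies $v_\mathfrak{P}(q_{i,j}(\theta)) = y_{i,j}$, matching the ordinate of the corresponding point. At a prime $\mathfrak{P}$ lying above a distinct factor $\hat\phi_{i'}$, the polynomial $q_{i,j}$ is coprime to $\hat\phi_{i'}$ modulo $p$ because the $\hat\phi_{i'}$ are the distinct irreducible factors of $\overline\Phi$, so $q_{i,j}(\theta)$ is a $\mathfrak{P}$-adic unit. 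Either way, $q_{i,j}(\theta)/p^{\floor{y_{i,j}}}$ is $p$-integral.

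For the second assertion, I would use Theorem \ref{th:index}, which under $p$-regularity gives $\ind_p(\Phi) = \sum_i \ind_{\hat\phi_i}(\Phi)$. Each contribution $\ind_{\hat\phi_i}(\Phi)$ is $\deg(\hat\phi_i)$ times the number of integer points strictly between $N_{\hat\phi_i}(\Phi)$ and the axes, and each such point corresponds to precisely one quotient $q_{i,j}$ with its paired floor $\floor{y_{i,j}}$. The main obstacle is showing that the augmented family is $\bbz_{(p)}$-linearly independent modulo $\bbz[\theta]$; this is accomplished by ordering the elements according to their $\hat\phi_i$-degrees so that the transition matrix from $\{1,\theta,\ldots,\theta^{\ell^n-1}\}$ to the new basis is triangular with diagonal entries $p^{\floor{y_{i,j}}}$, an analysis carried out by the Okutsu-Montes framework in \cite{GMN12}. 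The $p$-adic valuation of the determinant of this matrix is then exactly $-\ind_p(\Phi)$, so by equation \eqref{eq:1.3} the augmented $\bbz_{(p)}$-module coincides with $\cal O_K \otimes \bbz_{(p)}$. Patching across the finitely many primes dividing $\ind(\Phi)$ via the Chinese Remainder Theorem extracts a subset of the listed collection forming a $\bbz$-basis of $\cal O_K$.
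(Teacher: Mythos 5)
The paper offers no argument here at all: its entire proof of this corollary is the single sentence that it is a specialization of \cite{EMN09}, Theorem 2.6. So your proposal is not an alternative route so much as an attempted reconstruction of the proof of the theorem being cited. Judged on those terms it has the right architecture --- integrality of each $q_{i,j}(\theta)/p^{\floor{y_{i,j}}}$ read off the $\hat\phi_i$-polygon, then an index count against $\ind_p(\Phi)$ via Theorem \ref{th:index} --- and it honestly defers the triangularity analysis to \cite{GMN12}, which is where that work genuinely lives.

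Two points need correcting. First, the valuation claim should be an inequality: what the polygon gives is $\nu(q_{i,j}(\theta))\ge y_{i,j}$ for every prime above $p$ (valuation normalized so that $\nu(p)=1$), which is all integrality requires; equality is not guaranteed, and under $p$-regularity the primes above $p$ are in bijection with the irreducible factors of the residual polynomials attached to the sides, not with the sides themselves. Second, and more substantively for your determinant argument: when some $\deg\hat\phi_i>1$, the listed elements $q_{i,j}(\theta)/p^{\floor{y_{i,j}}}$ together with $1,\theta,\dots,\theta^{\ell^n-1}$ generate, as a $\bbz_{(p)}$-module, an order of index at most $p^{\sum_{i,j}\floor{y_{i,j}}}$ over $\bbz_{(p)}[\theta]$, whereas the target is $p^{\sum_{i,j}\deg(\hat\phi_i)\floor{y_{i,j}}}=p^{\ind_p(\Phi)}$; your claimed triangular matrix does not have enough rows, since the degrees $\deg\Phi-j\deg\hat\phi_i$ do not exhaust $0,\dots,\ell^n-1$ and may even collide across different $i$. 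The triangular numerators in \cite{EMN09} are the products $\theta^k q_{i,j}(\theta)$ with $0\le k<\deg\hat\phi_i$; only after including these $\theta$-multiples (equivalently, passing from the module to the ring generated) does the diagonal product account for all of $\ind_p(\Phi)$. This is also why the corollary is phrased as ``contains'' a basis, and why in the paper's actual application (Proposition \ref{prop:7.2}) the relevant $\phi$ is linear, so the quotients' degrees $\ell^n-j$ already run over everything and the issue disappears.
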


\begin{proof}
This is a specialization of \cite{EMN09}, Theorem 2.6.
\end{proof}

In section \ref{sec:ell-index}, we precisely determined the $\phi$-polygon for $\Phi$ for certain values of $t$. Under these same conditions, we determine generators for the ring $\cal O_K$.

\begin{prop} \label{prop:7.2}
Suppose that $t-2$ and $t+2$ are square-free, $\Phi(t) \equiv 0 \mmod{\ell^2}$, and $t \not\equiv \pm2 \mmod{\ell^2}$. Let $v = \min\{\nu_\ell(\Phi(t))-1,n\}$. Then
$$\cal O_K = \bbz\left[\theta, \frac{q_{\ell^{n-1}}(\theta)}{\ell}, \frac{q_{\ell^{n-2}}(\theta)}{\ell^2}, \ldots, \frac{q_{\ell^{n-v}}(\theta)}{\ell^v}\right].$$
\end{prop}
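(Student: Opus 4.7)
The plan is to show that $R := \bbz[\theta, \beta_1, \ldots, \beta_v]$, with $\beta_i := q_{\ell^{n-i}}(\theta)/\ell^i$, coincides with $\cal O_K$ by exhibiting a complete $\ell$-integral basis inside $R$. Since $t-2$ and $t+2$ are square-free, Theorem \ref{th:dc1} guarantees that $\bbz[\theta]$ is already $p$-maximal for every prime $p \neq \ell$, so the question reduces to the local one at $\ell$. With $\phi(x) = x - t$ (or $\phi(x) = x$ when $\ell \mid t$), the inclusion $R \subseteq \cal O_K$ is immediate from Corollary \ref{th:ellintegralbasis}, since each $\beta_i$ is the basis element attached to the vertex $(\ell^{n-i}, i)$ of the $\phi$-Newton polygon $N_\phi(\Phi)$.

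For the reverse containment, I would invoke Corollary \ref{th:ellintegralbasis} once more to realize a full $\ell$-integral basis as $\{q_j(\theta)/\ell^{\floor{y_j}} : 1 \le j \le \ell^n\}$, where $y_j$ denotes the ordinate of $N_\phi(\Phi)$ at abscissa $j$. From the slope data of Corollary \ref{cor:5.9}, the floor values read off as: $\floor{y_{\ell^{n-i}}} = i$ at each vertex for $1 \le i \le v$ (so the vertex basis elements are exactly the $\beta_i$); $\floor{y_j} = i-1$ on the interior of the edge from $(\ell^{n-i}, i)$ to $(\ell^{n-i+1}, i-1)$; and $\floor{y_j} = v$ for $1 \le j < \ell^{n-v}$ along $S'$. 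The task reduces to placing the non-vertex basis elements $q_j(\theta)/\ell^{\floor{y_j}}$ into $R$.

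The key tool is the recurrence $q_{j-1}(\theta) = (\theta - t)\, q_j(\theta) + a_{j-1}$ coming from the $\phi$-adic expansion $\Phi(x) = \sum_i a_i \phi(x)^i$, with $a_i \in \bbz$ since $\deg \phi = 1$. For each $i \in \{2, \ldots, v\}$ I would run a descending induction on $j$ along the edge $S_{n-i+1}$, whose base case $j = \ell^{n-i+1}$ gives the generator $\beta_{i-1} \in R$. Dividing the recurrence by $\ell^{i-1}$ produces
\begin{align*}
\frac{q_{j-1}(\theta)}{\ell^{i-1}} = (\theta - t)\,\frac{q_j(\theta)}{\ell^{i-1}} + \frac{a_{j-1}}{\ell^{i-1}};
\end{align*}
the inductive hypothesis handles the first summand, while the second lies in $\bbz$ because the Newton polygon bound $\nu_\ell(a_{j-1}) \ge y_{j-1}$, combined with the strict inequality $y_{j-1} > i-1$ on the interior of $S_{n-i+1}$ and the integrality of $\nu_\ell$, forces $\nu_\ell(a_{j-1}) \ge i$. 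An entirely parallel descending induction along $S'$, starting from $\beta_v$ and running down to $j = 1$, supplies the remaining basis elements $q_j(\theta)/\ell^v$.

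The principal obstacle is verifying the valuation inequality $\nu_\ell(a_{j-1}) \ge \floor{y_j}$ uniformly along each edge and at its endpoints; this is a consequence of the slope computations in Proposition \ref{prop:newtonpolyofchebi} and its shifted-variable analogue, but it requires a careful case split according to whether $j$ lies at a vertex or in the interior of an edge. Concatenating the $v$ separate inductions then yields $\cal O_K \subseteq R$ after localization at $\ell$, completing the proof.
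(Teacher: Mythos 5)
Your proposal is correct and follows essentially the same route as the paper's proof: both reduce to the prime $\ell$, invoke Corollary \ref{th:ellintegralbasis} to realize $\cal O_K$ via the full collection $\{q_j(\theta)/\ell^{\floor{y_j}}\}$, and then use the recursion $q_j(x) = \phi(x)q_{j+1}(x) + a_j$ together with $\nu_\ell(a_j) \ge y_j$ to discard every non-vertex generator. The only remark worth making is that the inequality you flag as the ``principal obstacle'' is immediate from the definition of $N_\phi(\Phi)$ as the lower convex envelope of the points $(j,\nu_\ell(a_j))$, so no case analysis is actually needed there.
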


\begin{proof}
Recall that $\Phi(x) = T_\ell^n(x)-t \equiv (x-t)^{\ell^n} \mmod \ell$, so let $\phi(x) = x-\overline t$. In Corollary \ref{cor:5.9} we determined $N_\phi(\Phi)$ and showed that $\Phi$ is $\ell$-regular. For each $1 \le j \le \ell^n$, the quotient $q_j(x)$ is a monic polynomial of degree $\ell^n-j$, and these quotients satisfy the recursion $q_j(x) = \phi(x) q_{j+1}(x) + a_j$ where $q_{\ell^n}(x) = 1$. By definition, $\nu_\ell(a_j) \ge \floor{y_j}$. Hence if $\floor{y_{j+1}} = \floor{y_j}$, then $q_{j+1}(\theta)/\ell^{\floor{y_{j+1}}} \in\cal O_K$ implies that $q_j(\theta)/\ell^{\floor{y_j}} \in \cal O_K$. It follows that 
$$\cal O_K = \bbz\left[\frac{q_{\ell^n}(\theta)}{\ell^{\floor{y_{\ell^n}}}}, \ldots, \frac{q_1(\theta)}{\ell^{\floor{y_1}}}\right] = \bbz\left[\theta, \frac{q_{\ell^{n-1}}(\theta)}{\ell}, \frac{q_{\ell^{n-2}}(\theta)}{\ell^2}, \ldots, \frac{q_{\ell^{n-v}}(\theta)}{\ell^v}\right].$$
\end{proof}

\section{Acknowledgements}
The author would like to thank Farshid Hajir for his helpful conversations, continued support, and guidance. Additionally, the author extends his thanks to Jeff Hatley and Nico Aiello for their thought-provoking discussions.

\bibliographystyle{plain}
\bibliography{DiscOfChebyRadicalExt}
\end{document}